\def\NZQ{\Bbb}               
\def\NN{{\NZQ N}}
\def\QQ{{\NZQ Q}}
\def\ZZ{{\NZQ Z}}
\def\frk{\frak}               
\def\mm{{\frk m}}
\def\Phi{{\frk n}}
\def\Phi{{\frk N}}
\def\MP{{\mathcal P}}
\def\MF{{\mathcal F}}
\def\MG{{\mathcal G}}
\def\MA{{\mathcal A}}
\def\MB{{\mathcal B}}
\def\e{{\epsilon}}
\def\opn#1#2{\def#1{\operatorname{#2}}} 
\opn\chara{char} \opn\length{\ell} \opn\pd{pd} \opn\rk{rk}
\opn\projdim{proj\,dim} \opn\injdim{inj\,dim} \opn\rank{rank}
\opn\depth{depth} \opn\grade{grade} \opn\height{height}
\opn\embdim{emb\,dim} \opn\codim{codim}
\opn\Tr{Tr} \opn\bigrank{big\,rank}
\opn\superheight{superheight}\opn\lcm{lcm}
\opn\trdeg{tr\,deg}
\opn\reg{reg} \opn\lreg{lreg} \opn\ini{in} \opn\lpd{lpd}
\opn\size{size}\opn\bigsize{bigsize}
\opn\cosize{cosize}\opn\bigcosize{bigcosize}
\opn\sdepth{sdepth}\opn\sreg{sreg}
\opn\link{link}\opn\fdepth{fdepth}
\opn\index{index}
\opn\index{index}
\opn\indeg{indeg}
\opn\N{N}
\opn\SSC{SSC}
\opn\SC{SC}
\opn\Span{Span}
\opn\Der{Der}
\opn\div{div} \opn\Div{Div} \opn\cl{cl} \opn\Cl{Cl}
\opn\Spec{Spec} \opn\Supp{Supp} \opn\supp{supp} \opn\Sing{Sing}
\opn\Ass{Ass} \opn\Min{Min}\opn\Mon{Mon} \opn\dstab{dstab} \opn\astab{astab}
\opn\Syz{Syz}
\opn\reg{reg}
\opn\Ann{Ann} \opn\Rad{Rad} \opn\Soc{Soc}
\opn\Im{Im} \opn\Ker{Ker} \opn\Coker{Coker} \opn\Am{Am}
\opn\Hom{Hom} \opn\Tor{Tor} \opn\Ext{Ext} \opn\End{End}
\opn\Aut{Aut} \opn\id{id}
\opn\nat{nat}
\opn\pff{pf}
\opn\Pf{Pf} \opn\GL{GL} \opn\SL{SL} \opn\mod{mod} \opn\ord{ord}
\opn\Gin{Gin} \opn\Hilb{Hilb}\opn\sort{sort}
\opn\initial{init}
\opn\ende{end}
\opn\height{height}
\opn\type{type}
\opn\aff{aff} \opn\con{conv} \opn\relint{relint} \opn\st{st}
\opn\lk{lk} \opn\cn{cn} \opn\core{core} \opn\vol{vol}
\opn\link{link} \opn\star{star}\opn\lex{lex}\opn\Mon{Mon}\opn\Min{Min}
\opn\gr{gr}
\def\pot#1#2{#1[\kern-0.28ex[#2]\kern-0.28ex]}
\opn\dirlim{\underrightarrow{\lim}}
\opn\inivlim{\underleftarrow{\lim}}
\let\union=\cup
\let\sect=\cap
\let\tensor=\otimes
\let\iso=\cong
\let\Union=\bigcup
\let\Dirsum=\bigoplus
\let\to=\rightarrow
\let\To=\longrightarrow
\def\Implies{\ifmmode\Longrightarrow \else
        \unskip${}\Longrightarrow{}$\ignorespaces\fi}
\def\implies{\ifmmode\Rightarrow \else
        \unskip${}\Rightarrow{}$\ignorespaces\fi}
\def\iff{\ifmmode\Longleftrightarrow \else
        \unskip${}\Longleftrightarrow{}$\ignorespaces\fi}
\newtheorem{Theorem}{Theorem}[section]
 \newtheorem{Lemma}[Theorem]{Lemma}
 \newtheorem{Corollary}[Theorem]{Corollary}
 \newtheorem{Proposition}[Theorem]{Proposition}
 \newtheorem{Remark}[Theorem]{Remark}
 \newtheorem{Example}[Theorem]{Example}
 \newtheorem{Definition}[Theorem]{Definition}
 \newtheorem{Discussion}[Theorem]{Discussion}
\def\Ac{{\mathcal A}}
\let\epsilon\varepsilon
\let\kappa=\varkappa
\def\qed{\ifhmode\textqed\fi
      \ifmmode\ifinner\quad\qedsymbol\else\dispqed\fi\fi}
\def\textqed{\unskip\nobreak\penalty50
       \hskip2em\hbox{}\nobreak\hfil\qedsymbol
       \parfillskip=0pt \finalhyphendemerits=0}
\def\dispqed{\rlap{\qquad\qedsymbol}}
\opn\dis{dis}
\def\pnt{{\raise0.5mm\hbox{\large\bf.}}}
\opn\Lex{Lex}
\begin{document}

 \title{Toric rings, inseparability and rigidity}

 \author {Mina Bigdeli, J\"urgen Herzog and Dancheng Lu}

\address{Mina Bigdeli, Faculty of Mathematics,  Institute for Advanced Studies in Basic Sciences (IASBS),
45195-1159 Zanjan,  Iran}
\email{mina.bigdeli@yahoo.com}


\address{J\"urgen Herzog, Fakult\"at f\"ur Mathematik, Universit\"at Duisburg-Essen,
45117 Essen, Germany}
\email{juergen.herzog@gmail.com}

\address{Dancheng Lu, Department of Mathematics, Soochow University, 215006 Suzhou, P.R.China}
\email{ludancheng@suda.edu.cn}

 \begin{abstract}  This article provides the basic algebraic background on infinitesimal deformations and presents the  proof of the well-known fact  that the non-trivial infinitesimal deformations of a $K$-algebra $R$ are parameterized by the elements of cotangent module $T^1(R)$ of $R$. In this article we focus on deformations of toric rings, and give an explicit description of $T^1(R)$ in  the case that $R$ is a toric ring.
 In particular, we are interested in unobstructed deformations which preserve the toric structure. Such deformations  we call separations. Toric rings which do  not admit any separation are called inseparable. We apply the theory to the edge ring of a finite graph. The coordinate ring of a convex polyomino may be viewed as the edge ring of a special class of bipartite graphs.  It is shown that the coordinate ring of any convex polyomino is inseparable.  We introduce the concept of semi-rigidity, and give a combinatorial description of the graphs whose edge ring is semi-rigid. The results are applied to show that  for $m-k=k=3$, $G_{k,m-k}$ is not rigid while for $m-k\geq k\geq 4$, $G_{k,m-k}$ is rigid. Here $G_{k,m-k}$ is the complete bipartite graph $K_{m-k,k}$ with one edge removed.

 \end{abstract}

\subjclass[2010]{Primary 13D10, 05E40; Secondary 13C13.}

\keywords{Deformation, Toric ring, Rigid, Inseparable, Bipartite graph, Convex Polyomino}

\thanks{Part of this article  was written while the third author was visiting the Department of
Mathematics of University Duisburg-Essen. He wants to express his thanks for the hospitality}

 \maketitle

 \setcounter{tocdepth}{1}
\tableofcontents

\section*{Introduction}

In this paper we study infinitesimal deformations and  unobstructed deformations of toric rings which preserve the toric structure, and apply this theory to edge rings of bipartite graphs. Already in \cite{A1} and \cite{A2}, infinitesimal and homogeneous deformations of toric varieties have been considered  from a geometric point of view. The viewpoint of this paper is more algebraic and  does not exclude non-normal toric rings, having in mind toric rings which naturally appear in combinatorial contexts. This aspect of deformation theory has also been pursued in the papers \cite{AC2},\cite{AC1} and \cite{ABHL}, where deformations of Stanley-Reisner rings attached to simplicial complexes were studied.

Due to the lack of a suitable reference in which the basics of deformation theory are presented in algebraic terms, we give in the first two sections a short introduction to deformation theory.

Let $K$ be a field. It will be shown that  the infinitesimal deformations  of a finitely generated  $K$-algebra $R$ are parameterized by the elements of the cotangent module $T^1(R)$, which in the case that $R$ is a domain is isomorphic to $\Ext_R^1(\Omega_{R/K}, R)$. Here $\Omega_{R/K}$ denotes  the module of differentials of $R$ over $K$. The ring $R$ is called {\em rigid} if $T^1(R)=0$. We  refer the reader to \cite{SBook} for a further study  of deformation theory.

In this article we focus on deformations of  toric algebras. They may be viewed as affine semigroup rings.
Let $H$ be an affine semigroup and $K[H]$ its affine semigroup ring. We are interested in the module $T^1(K[H])$. This module is naturally $\ZZ H$-graded. Here $\ZZ H$ denotes the associated group of $H$ which for an affine semigroup is a free group of finite rank. For each $a\in \ZZ H$, the $a$-graded component $T^1(K[H])_a$   of $T^1(K[H])$  is a finite dimensional $K$-vector space.

In Section~3 we describe the vector space $T^1(K[H])_a$ and provide a method   to compute its dimension. Let $H\subset  \ZZ^m$ with generators $h_1,\ldots, h_n$. Then  $K[H]$  is the $K$-subalgebra of the ring $K[t_1^{\pm 1},\ldots,t_m^{\pm 1}]$ of Laurent polynomials generated by the monomials $t^{h_1}, \ldots, t^{h_n}$. Here $t^a=t_1^{a(1)}\cdots t_m^{a(m)}$ for $a=(a(1),\ldots,a(m))\in \ZZ^m$. Let $S=K[x_1,\ldots,x_n]$ be the polynomial ring over $K$ in the indeterminates $x_1,\ldots,x_n$. Then $S$ may be viewed as a $\ZZ H$-graded ring with $\deg x_i=h_i$, and the $K$-algebra homomorphism $S\to K[H]$ with $x_i\mapsto t^{h_i}$ is a homomorphism of $\ZZ H$-graded $K$-algebras. We denote by  $I_H$  the kernel of this homomorphism. The ideal $I_H$ is called the toric ideal associated with $H$. It is generated by homogeneous binomials. To describe these binomials, consider the group homomorphism $\ZZ^n\to \ZZ^m$ with $\epsilon_i\mapsto h_i$, where $\epsilon_1,\ldots,\epsilon_n$ is the canonical basis of $\ZZ^n$. The kernel $L$ of this group homomorphism is a lattice of $\ZZ^n$ and is called the {\em relation lattice} of $H$. Here a lattice just means a subgroup of $\ZZ^n$.  For $v=(v(1),\ldots,v(n))\in \ZZ^n$ we define the binomial $f_v=f_{v^+}-f_{v^-}$ with $f_{v^+}=\prod_{i,\; v(i)\geq 0}x_i^{v(i)}$ and $f_{v^-}=\prod_{i,\; v(i)\leq 0}x_i^{-v(i)}$, and let $I_L$ be the ideal generated by the binomials $f_v$ with $v\in L$. It is well known that $I_H=I_L$. Each $f_v\in I_H$ is homogeneous of degree $h(v)=\sum_{i,\; v(i)\geq 0}v(i)h_i$. Let $f_{v_1},\ldots,f_{v_s}$ be a system of generators of $I_H$. We  consider the $(s\times n)$-matrix
\[
A_H= \left( \begin{array}{cccc}
v_1(1) & v_1(2) & \ldots & v_1(n) \\
v_2(1) & v_2(2) & \ldots & v_2(n)\\
\vdots  & \vdots  &  & \vdots \\
v_s(1) & v_s(2) & \ldots & v_s(n)
\end{array} \right).
\]
Summarizing the results  of Section~3, for any $a\in \ZZ H$ the $K$-dimension of $T^1(K[H])_a$  can be computed as follows: let $l=\rank A_H$, $l_a$ be the rank of the submatrix of $A_H$ whose rows  are  the $i$th rows of $A_H$ for which $a+h(v_i)\not \in H$, and let $d_a$  be the rank of the submatrix of $A_H$ whose columns  are  the $j$th columns of $A_H$ for which $a+h_j\in H$. Then
\[
\dim_K T^1(K[H])_a=l-l_a-d_a.
\]

In Section~4 we introduce the concept of separation for a torsionfree lattice $L\subset \ZZ^n$. Note that a lattice $L\subset \ZZ^n$ is torsionfree if and only if it is the relation lattice of some affine semigroup. Given an integer $i\in [n]=\{1,2,\ldots,n\}$, we say that $L$ admits an $i$-{\em separation} if there exists a torsionfree lattice $L'\subset \ZZ^{n+1}$ of the same rank as $L$ such that $\pi_i(I_{L'})=I_L$, where $\pi_i\: S[x_{n+1}]\to S$ is the $K$-algebra homomorphism which identifies $x_{n+1}$ with $x_i$. An additional condition makes sure that this deformation which induces an element in $T^1(K[H])_{-h_i}$ is non-trivial, see \ref{separable} for the precise definition. We say that $L$ is {\em inseparable}, if for all $i$, the lattice $L$ admits no $i$-separation, and we call $H$ and its toric ring {\em inseparable} if its relation  lattice is inseparable.
In particular, if the generators of $H$ belong to a hyperplane of $\ZZ^m$, so that $K[H]$ also admits a natural standard grading, then $H$ is inseparable if $T^1(K[H])_{-1}=0$, see Theorem~\ref{sufficient}. In general, the converse is not true since the infinitesimal deformations given by non-zero elements of $T^1(K[H])_{-1}$ may be obstructed. We demonstrate this theory and show that a numerical semigroup generated by three elements  which is not a complete intersection is $i$-separable for $i=1,2,3$, while if it is a complete intersection it is $i$-separable for at least two $i\in \{1,2,3\}$. For the proof of this fact we use the structure theorem of such semigroup rings given in \cite{H}.

Section~5 is devoted to the study of $T^1(R)$ when $R$ is the edge ring of a bipartite graph. This class of rings has been well studied in combinatorial commutative algebra, see e.g. \cite{OH} and \cite{V}. For a given simple graph $G$ of the vertex set $[n]$  one considers the edge ring $R=K[G]$ which is the toric ring generated over $K$ by the monomials $t_it_j$ for which $\{i,j\}$ is an edge of $G$.  Viewing the edge ring as a  semigroup ring $K[H]$, the edges $e_i$ of $G$ correspond  the generators $h_i$ of the semigroup $H$. We say that $G$ is {\em inseparable} if the corresponding semigroup is  inseparable. The main result  of this section is a combinatorial criterion for a bipartite graph $G$ to be inseparable.  Let $C$ be a cycle of $G$ and  $e$ a chord of $G$. Then $e$ splits $C$ into two disjoint connected components  $C_1$ and $C_2$ which are obtained by restricting $C$ to the complement of $e$.  A path $P$ of $G$ is called a {\em crossing path} of $C$ with respect to $e$ if one end of $P$ belongs to $C_1$ and the other end to $C_2$. Now the criterion (Corollary~\ref{unique}) says that a bipartite graph $G$ is inseparable if and only if for any cycle $C$ which has a unique chord $e$, there exists a crossing path  of $C$ with respect to $e$. In particular, if no cycle  has a chord, then $G$ is inseparable. By using this criterion we show in Theorem~\ref{polyomino} that  the coordinate ring of any convex polyomino, which may be interpreted as a special class of edge rings, is inseparable.

The concept of semi-rigidity is introduced in Section 6. We call $H$ {\em semi-rigid} if $T^1(K[H])_{-a}=0$  for all $a\in H$, and characterize in Theorem~\ref{semi-rigid} the semi-rigidity of bipartite graphs  in terms of the non-existence of  certain constellations of edges and cycles of the graph. The classification of  rigid  bipartite graphs  is much more complicated, and we do not have a general combinatorial criterion to see when a bipartite graph is rigid. However we study, as an example, a particular class of bipartite graphs in Section 7.  For   $m-k\geq k\geq 3$,  we consider  the graph $G_{k,m-k}$  which is obtained by removing an edge from the complete bipartite graph $K_{m-n,n}$.  It is shown in Proposition~\ref{main4} that for $m-k=k=3$, $G_{k,m-k}$ is not rigid while for $m-k\geq k\geq 4$, $G_{m-k,k}$ is rigid. It remains a challenging open problem to classify all rigid bipartite graphs.

\section{Infinitesimal  deformations}

In this section we give a short introduction to infinitesimal deformations. We fix a field $K$ and let $\Ac$ be the category of standard graded $K$-algebras with homogeneous homomorphisms of degree zero as its morphisms. For each $A\in \Ac$ we denote by $\mm_A$ the graded maximal ideal of $A$.

Let $A\in \Ac$. A  {\em deformation} of $A$ with basis $B$ is a flat homomorphism $B\to C$ of standard graded $K$-algebras whose  fiber $C/\mm_BC$ is isomorphic to $A$ as  $K$-algebra.

Thus we obtain  a commutative diagram of standard graded $K$-algebras
\[
\begin{CD}
C @>>> A\\
@AAA    @AAA\\
B @>>> K.
\end{CD}
\]

Let $I\subset B$ be a graded ideal. Then $B \to C$ induces the  flat homomorphism $B/I\to C/IC$, and hence induces the deformation

\[
\begin{CD}
C/IC @>>> A\\
@AAA    @AAA\\
B/I @>>> K.
\end{CD}
\]

\medskip
We denote by $K[\epsilon]$ the $K$-algebra with $\epsilon\neq 0$ but $\epsilon^2=0$. In other words, $K[\epsilon]=K[x]/(x^2)$.

Any surjective $K$-algebra homomorphism $B\to K[\epsilon]$ induces a deformation  of $A$ with basis $K[\epsilon]$. A deformation of $A$ with basis $K[\epsilon]$ is called an {\em infinitesimal deformation}.
\[
\begin{CD}
C @>>> A\\
@AAA    @AAA\\
K[\epsilon] @>>> K.
\end{CD}
\]

\begin{Lemma}
\label{flatness} $K[\epsilon]\to C$ is flat if and only if $0:_C\epsilon=\epsilon C$.
\end{Lemma}

\begin{proof} It is known that $C$ is a flat $K[\epsilon]$-module, if and only if $$\Tor_1^{K[\epsilon]}(C,K[\epsilon]/(\epsilon))=0.$$

We have the exact sequence
\[
\begin{CD}
\cdots @> \epsilon >> K[\epsilon]  @> \epsilon >> K[\epsilon] @>>>  K[\epsilon]/(\epsilon)@>>> 0.
\end{CD}
\]

Tensoring it with $C$ we obtain the complex
\[
\begin{CD}
\cdots @> \epsilon >> C @> \epsilon >> C @>>> 0,
\end{CD}
\]

whose $i$th homology is  $\Tor_i^{K[\epsilon]}(C,K[\epsilon]/(\epsilon))$.

Thus we see that $\Tor_1(C,K[\epsilon])/(\epsilon))=(0:_C\epsilon)/\epsilon C$. The assertion follows.
\end{proof}

Whenever there is a deformation $B\to C$ of $A$ with $B\neq K$, then there is also an infinitesimal deformation, induced  by  a surjective $K$-algebra homomorphism $B\to K[\epsilon]$.
An infinitesimal deformation always exists. For example
\[
\begin{CD}
A[\epsilon]=A\otimes_K K[\epsilon] @>>> A\\
@AAA    @AAA\\
K[\epsilon] @>>> K.
\end{CD}
\]

However this is a trivial deformation. More generally we say that $C$ is a {\em trivial deformation} of $A$ with basis $B$, if there exists an  isomorphism $C\to A\otimes_K B$ such that  the diagram

\[
\begin{array}{ccccc}
&&C\\
&\nearrow &\downarrow & \searrow\\
B&\to& A\tensor B&\to &A
\end{array}
\]
is commutative. Here $ A\tensor B\to A$ is the composition of $ A\tensor B\to A\tensor B/\mm_B$ and  $A\tensor B/\mm_B\cong A$.

The algebra $A$ is called {\em rigid}, if it admits no non-trivial infinitesimal deformation.

Can an infinitesimal deformation of $A$ be lifted to a deformation with  basis $B$? In general there are obstructions to do this.

An infinitesimal deformation of $A$ which is induced by  a deformation of $A$ with basis $K[t]$ (the polynomial ring), is called   {\em unobstructed}.

\section{The cotangent functor $T^1$}

How can we find and classify all non-trivial infinitesimal deformations of $S/I$?

Let   {\em $S=K[x_1,\ldots,x_n]$} be the polynomial ring and let  $A=S/I$, where $I\subset S$ is a graded ideal.

Let  {\em $J\subset S[\epsilon]$} be  a graded  ideal, and let    $C=S[\epsilon]/J$ such that $C/\epsilon C=S/I$.

\begin{Proposition} Let   $I=(f_1,\ldots,f_m)$.  Then $J=(f_1+g_1\epsilon,\ldots, f_m+g_m\epsilon)$  and  $K[\epsilon]\to S[\epsilon]/J$ is flat if and only  $\varphi: I\to S/I$   with  $f_i\mapsto g_i+I$
is a well-defined $S$-module homomorphism.
\end{Proposition}

\begin{proof} Assume that  $K[\epsilon]\to C$ is flat. Let  $\sum_ih_if_i=0$ with $h_i\in S$. We want to show that  $\sum_ih_ig_i\in I$,  because this is equivalent to saying that $\varphi$ is well-defined.  To see this, let  $g=\sum_ih_i(f_i+\epsilon g_i)$.
Then  $g=\epsilon(\sum_ih_ig_i)$ and  $g\in J$. Therefore, $\sum_ih_ig_i\in J:\epsilon$. Since $C$ is a flat  $K[\epsilon]$-module, there exists  $p\in S$ such that $\sum_ih_ig_i-\epsilon p\in J$. Modulo  $\epsilon$ it follows that  $\sum_ih_ig_i\in I$.

Conversely, we want to show that $K[\epsilon]\to S[\epsilon]/J$ is flat. By Lemma~\ref{flatness},  we must show that $J:\epsilon=\epsilon S+J$. It suffices to prove that $J:\epsilon\subset \epsilon S+J$, because the other inclusion is trivial. Now let $g\in J:\epsilon$, where $g=a+\epsilon b$ with $a,b\in S$. Then
\[
\epsilon a = \epsilon g = \sum_{i=1}^m (h_i+\epsilon h_i')(f_i+\epsilon g_i)
\]
for some $h_i$ and $h_i'$ in $S$.

It follows that $\sum _{i=1}^mh_if_i=0$, and  that
$a=\sum_{i=1}^mh_ig_i+\sum_{i=1}^mh_i'f_i$.
Our assumption implies that $\sum_{i=1}^mh_ig_i\in I$. Therefore, $a\in I$. Let $a=\sum_{i=1}^ma_if_i$. Then $a=\sum_{i=1}^ma_i(f_i+\epsilon g_i)-\epsilon \sum_{i=1}^ma_ig_i$. Hence, $a\in \epsilon S+J$ and therefore  also $g\in \epsilon S+J$.
\end{proof}

The above proposition says that the infinitesimal deformations of $S/I$ are in bijection to the elements of  $I^*:=\Hom_S(I,S/I)$.

Let $C=S[\epsilon]/J$ be an infinitesimal deformation of $S/I$. Then this deformation  is trivial if and only if there is a $K[\epsilon]$-automorphism $\varphi : S[\epsilon] \to S[\epsilon]$ which is the identity map on $S$ modulo $\epsilon$ and such that $\varphi(IS[\e])=J$.

\medskip
Let  $\Der_K(S)$ be the set of  $K$-derivations $\partial : S\to S$ of $S$.   Recall that a $K$-linear map  $\partial : S\to S$ is called a {\em $K$-derivation}, if
\begin{enumerate}
\item[(i)] $\partial(a)=0$ if $a\in K$,
\item[(ii)] $\partial(fg)=f\partial(g)+g\partial(f)$ for all $f,g\in S$.
\end{enumerate}

If $\partial, \partial'$ are $K$-derivations and $s,s'\in S$, then $s\partial+s'\partial'$ with $(s\partial+s'\partial')(f):=s\partial(f)+s'\partial'(f)$ for all $f\in  S$ is again a $K$-derivation. Thus $\Der_K(S)$ is an $S$-module.

Examples of $K$-derivatives are  the partial derivatives $\partial_i$ which are defined by the property that
$\partial_i(x_j)=1$ if $j=i$ and $\partial_i(x_j)=0$, if $j\neq i$. It is known that  $\Der_K(S)$ is a free $S$-module with basis $\partial_1,\ldots,\partial_n$

\begin{Proposition}
 \label{der}
 The infinitesimal deformation $S[\e]/J$ of $S/I$ is trivial if  and only if there exists $\partial\in \Der_K(S)$ such that  $J=(f_1+\partial(f_1)\epsilon, \ldots, f_m+\partial(f_m)\epsilon)$.
\end{Proposition}

\begin{proof} Suppose there exists $\partial\in \mbox{Der}_K(S)$ with $J=(f_1+\partial(f_1)\epsilon, \ldots, f_m+\partial(f_m)\epsilon)$.
We define the $K[\epsilon]$-algebra automorphism $\varphi: S[\epsilon]\to S[\epsilon]$ with $x_i\mapsto x_i+\partial(x_i)\epsilon$.

Then \begin{eqnarray*}
\varphi(\prod_{i=1}^nx_i^{a_i})&=&\prod_{i=1}^n(x_i+\partial (x_i)\epsilon)^{a_i}=\prod_{i=1}^n(x_i^{a_i}+a_ix_i^{a_i-1}\partial(x_i)\epsilon)\\
&=&\prod_{i=1}^nx_i^{a_i}+\sum_{i=1}^na_ix_i^{a_i-1}\partial(x_i)\epsilon \prod_{j\neq i}x_j^{a_j}\\
&=& \prod_{i=1}^nx_i^{a_i}+\partial(\prod_{i=1}^nx_i^{a_i})\epsilon.
\end{eqnarray*}
Since $\varphi $ and $\partial$ are $K$-linear, it follows that  $\varphi(f_i)=f_i+\partial(f_i)\epsilon$ for all $i$.
Therefore, $\varphi(IS[\epsilon])=J$.

Conversely, suppose $J=(f_1+g_1\e,\ldots,f_m+g_m\e)$ and that there exists a $K[\e]$-isomorphism $\varphi\:S[\e]\to S[\e]$ with $\varphi(x_i) =x_i+c_i\e$ for $i=1,\ldots,n$ and such that $\varphi(f_j)=f_j+g_j\e$ for $j=1,\ldots,m$. Let $\partial$ be the $K$-derivation with $\partial(x_i)=c_i$. A calculation  as before shows that $g_j=\partial(f_j)$ for $j=1,\ldots,m$.
\end{proof}

As a consequence of our considerations so far, we see the following:  if we consider the natural map $\delta^*: \Der(S)_K\to I^*$ which assigns to $\partial \in \Der_K(S)$ the element $\delta^*(\partial)$ with
$$\delta^*(\partial)(f_i)=\partial f_i+I,$$
then  the non-zero elements of  $\Coker \delta^*$ are in bijection to the  non-trivial infinitesimal deformations of $S/I$.
This cokernel is denoted by $T^1(S/I)$ and is called the  {\em first cotangent module} of $S/I$.

For any $B$-algebra homomorphism  $B\to A$ and any $A$-module $M$,  there exist modules  $T^i(A/B, M)$ and  $T_i(A/B, M)$ for $i=0,1,\ldots$, the so-called  {\em tangent} and {\em cotangent modules}. They are functorial  in all three variables.

In  1967,  Lichtenbaum and  Schlessinger \cite{LS} first introduced the functors  $T^i$ for $i=0,1,2$ in the paper  ``On the cotangent complex of a morphism" Trans AMS.

Quillen \cite{Qu}  in 1970  and Andr\'{e} \cite{An} in  1974 defined the higher cotangent functors and developed their theory.

In characteristic $0$,  a different and simpler approach is given by Palamodov \cite{P} by using DGA algebras.

\medskip
$T^1(S/I)$ is a finitely generated graded (multigraded) $S$-module if $S/I$ is graded (multigraded).
Furthermore, $S/I$ is rigid if $S/I$ admits no non-trivial infinitesimal deformations, and this is the case  if and only if  $T^1(S/I)=0$.

\begin{Example} {\em Let $I=(xy,xz,yz)\subset S=K[x,y,z]$, and $L=(xw,xz,yz)\subset T=K[x,y,z,w]$.
Then $t:=w-y$ is a non-zerodivisor of $T/L$. Thus $K[t]\to T/L$ is flat, and hence $T/L\otimes K[\epsilon]$ with $K[\epsilon]=K[t]/(t^2)$ is an infinitesimal deformation of $S/I$.
Note that  $T=K[x,y,z,t]$ and $L=(xy+xt,xz,yz)$. Hence $T/L\otimes K[\epsilon]\cong S[\epsilon]/(xy+x\epsilon,xz, yz)$.

We claim that  $S[\epsilon]/(xy+x\epsilon,xz, yz)$ is a non-trivial deformation of $S/I$.
Suppose it is trivial.  Then there exists  $\partial\in \mbox{Der}_K(S)$ with $\partial(xy)=x$ and   $\partial(xz)=\partial(yz)=0$.

The module  $\Der_K(S)$ is a free  $S$-module with basis  $\partial_x,\partial_y,\partial_z$.
Let  $\partial =f\partial_x+g\partial_y+h\partial_z$. Since $\partial(xz)=0=\partial(yz)$, we conclude that $fy=gx$. Thus, $f=xr, g=yr$ with $r\in S$. The condition that $x=\partial(xy)$ implies that $fy+gx=x$. Hence $2yr=1$, a contradiction.
The calculations also show that $T^1(S/I)_{-1}\neq 0$. We refer readers  to \cite{ABHL} for the details on infinitesimal deformations of  squarefree monomial ideals. }
\end{Example}

Let  $R=S/I$, where  $I\subset S$ is  a graded ideal, and let  $M$ be  a graded  $R$-module.

A $K$-derivation  $\delta : R\to M$  is a $K$-linear map such that
\[
\delta(rs)=r\delta(s)+s\delta(r) \quad \text{for all $r,s\in R$.}
\]

The {\em module of differentials} $\Omega_{R/K}$ is defined by the universal property that there exists a $K$-derivation
$d:R\to \Omega_{R/K}$ such that for any derivation $\delta: R\to M$ there exists an $R$-module homomorphism $\varphi : \Omega_{R/K}\to M$ such that
\[
\partial =\varphi\circ d.
\]

Let $I=(f_1,\ldots,f_m)$. Then

\[
\Omega_{R/K}\cong (\bigoplus_{i=1}^nRdx_i)/U,
\]
where $\bigoplus_{i=1}^nRdx_i$ is the free $R$-module with basis $dx_1,\ldots,dx_n$ and $U$ is generated by the elements $\sum_{i=1}^n\overline{\partial_i (f_j)}dx_i$ for $j=1,\ldots,m$,  and where $\bar{g}$ denotes the residue class of a polynomial $g\in S$ modulo $I$.
Thus the relation matrix of $\Omega_{R/K}$ is the Jacobian matrix.

\medskip
There is the fundamental exact sequence of $R$-modules
\[
I/I^2\to \bigoplus_{i=1}^nRdx_i\to \Omega_{R/K}\to 0,
\]
where $\delta:I/I^2\to  \bigoplus_{i=1}^nRdx_i$ is the $R$-linear map
\[
f +I^2\mapsto \sum_{i=1}^n\overline{\partial_i (f)}dx_i.
\]

For an $R$-module $M$ we use $M^*$ to denote the dual module $\Hom_R(M,R)$ of $M$. By dualizing  the  fundamental exact sequence  one obtains the exact sequence

\[
\delta^*: \bigoplus_{i=1}^nR\partial_i\to (I/I^2)^*\to T^1(R)\to 0.
\]

In general, the map $\delta:I/I^2\to  \bigoplus_{i=1}^nRdx_i$ is not injective.
Let $V=\Ker \delta$. If  $R$ is reduced and  $K$ is a perfect field,  then $\Supp V\sect\Ass R=\emptyset$. To see this, we first observe that $\Ass R= \Min R$, where $\Min R$ denotes  the set of minimal prime ideals of $R$. Let $\wp \in \Min R$ and $P=\pi^{-1}(\wp)\subset S$, where $\pi\:S\to R$ is the canonical epimorphism. Then $IS_P=PS_P$ and $R_\wp\iso S_P/PS_P=L$ is a field. Since $K$ is perfect it follows that $L/K$ is a separable extension. Therefore, by \cite[Corollary 6.5]{Ku}  the natural map $\sigma\: PS_P/P^2S_P\to \Omega_{S_P/K}\tensor S_P/PS_P$ is injective. Since $(I/I^2)_P=PS_P/P^2S_P$ and since the module of differential localizes we also have
\[
(\bigoplus_{i=1}^nRdx_i)_P=(\Omega_{S/K}\tensor S/I)_P=\Omega_{S_P/K}\tensor S_P/PS_P.
\]
This  shows that $\sigma=\delta\tensor S_P$. Thus  $V_P=0$, as desired.

Now as we know that  $\Supp V\sect\Ass R=\emptyset$,  it follows that  $V^*=\Hom_R(V,R)=0$.
Therefore, by dualizing the exact sequence
\[
0\to V\to I/I^2\to U\to 0
\]
we obtain that $U^*=(I/I^2)^*$.

Now the fundamental exact sequence yields
\begin{eqnarray*}
\Ext^1_R(\Omega_{R/K},R)&=&\Coker(\bigoplus_{i=1}^nR\partial_i\to U^*)\\
&=&\Coker(\bigoplus_{i=1}^nR\partial_i\to (I/I^2)^*)=T^1(R).
\end{eqnarray*}

\section{$T^1$  for toric rings}
Let $H$ be an {\em affine semigroup}, that is, a finitely generated subsemigroup of $\ZZ^m$ for some $m>0$. Let $h_1,\ldots,h_n$ be the minimal generators of $H$, and fix a field $K$ of characteristic $0$. The toric ring $K[H]$ associated with $H$  is the $K$-subalgebra of the ring $K[t_1^{\pm 1},\ldots,t_m^{\pm 1}]$ of Laurent polynomials generated by the monomials $t^{h_1}, \ldots, t^{h_n}$. Here $t^a=t_1^{a(1)}\cdots t_m^{a(m)}$ for $a=(a(1),\ldots,a(m))\in \ZZ^m$.

Let $S=K[x_1,\ldots,x_n]$ be the polynomial ring over $K$ in the variables $x_1,\ldots,x_n$. The $K$-algebra $R=K[H]$ has a presentation $S\to R$ with $x_i\mapsto t^{h_i}$ for $i=1,\ldots,n$. The kernel $I_H\subset S$ of this map is called the {\em toric ideal} attached to $H$. Corresponding to this presentation of $K[H]$ there is  a presentation $\NN^n\to H$ of $H$ which can be extended to the group homomorphism $\ZZ^n\to \ZZ^m$ with $\epsilon_i\mapsto h_i$ for $i=1,\ldots,n$, where $\epsilon_1,\ldots,\epsilon_n$ denotes the canonical basis of $\ZZ^n$. Let $L\subset \ZZ^n$ be the kernel of this group homomorphism. The lattice $L$ is called the {\em relation lattice} of $H$. Note that  $L$ is a free abelian group and $\ZZ^n/L$ is torsion-free.

For a vector $v\in \ZZ^n$ with $v=(v(1),\ldots,v(n))$, we set
\[
v_+=\sum_{i,\; v(i)\geq 0}v(i)\epsilon_i\quad  \text{and}\quad  v_-=\sum_{i,\; v(i)\leq 0}-v(i)\epsilon_i.
\]
Then $v=v_+-v_-$. It is a basic fact and well-known (see e.g. \cite[Lemma 5.2]{EH})  that $I_H$ is generated by the binomials $f_v$ with $v\in L$, where $f_v=x^{v_+}-x^{v_-}$.

We define an $H$-grading on $S$ by setting $\deg x_i=h_i$. Then $I_H$ is a graded ideal with $\deg f_v= h(v)$, where
\begin{eqnarray}
\label{h(v)}
h(v)=\sum_{i,\; v(i)\geq 0}v(i)h_i\;  (=\sum_{i,\; v(i)\leq 0}-v(i)h_i).
\end{eqnarray}

Let $v_1,\ldots ,v_r$ be a basis of $L$. Since $I_H$ is a prime ideal we may localize $S$ with respect to this prime ideal and obtain
\[
I_HS_{I_H}=(f_{v_1},\ldots,f_{v_r})S_{I_H}.
\]
In particular, we see that
\begin{eqnarray}
\label{height=rank}
\height I_H=\rank L.
\end{eqnarray}

\medskip
Let, as before,  $\Omega_{R/K}$ be the module of differentials of $R$ over $K$. Since $R$ is a domain and $\chara(K)=0$, the cotangent module $T^1(R)$ is isomorphic to $\Ext^1_R(\Omega_{R/K}, R)$, and  since $R$ is $H$-graded it follows that $\Omega_{R/K}$ is $H$-graded as well, and hence $\Ext^1_R(\Omega_{R/K}, R)$ and $T^1(R)$ are $\ZZ H$-graded. Here $\ZZ H$ denotes the associated group of $H$, that is, the smallest subgroup of $\ZZ^m$ containing $H$. It is our goal to compute the graded components $T^1(R)_a$ of $T^1(R)$ for $a\in \ZZ H.$

The module of differentials has a presentation
\[
\Omega_{R/K}=(\Dirsum_{i=1}^nRdx_i)/U,
\]
where $U$ is the submodule of the free $R$-module $\Dirsum_{i=1}^nRdx_i$  generated by the elements
$df_v$ with $v\in L$, where
\[
df_v=\sum_{i=1}^n(\partial f_v/\partial x_i) dx_i.
\]
Here $\partial f_v/\partial x_i$ stands for partial derivative of $f_v$ with respect to $x_i$, evaluated modulo $I_H$.

One verifies at once that
\begin{eqnarray}
df_v=\sum_{i=1}^nv(i)t^{h(v)-h_i}dx_i.
\end{eqnarray}

For $i\in [n]$, the basis element $dx_i$ of $\Omega_{S/K}\tensor_S R=\Dirsum_{i=1}^nRdx_i$ is given the degree $h_i$. Then $U$ is an $H$-graded submodule of $\Omega_{S/K}\tensor_S R$, and $\deg df_v= \deg f_v=h(v)$.

\medskip
For any $\ZZ H$-graded $R$-module $M$ we denote by $M^*$ the graded $R$-dual $\Hom_R(M,R)$. Then the exact sequence of $H$-graded $R$-modules
\[
0\rightarrow U\rightarrow \Omega_{S/K}\otimes_S R\rightarrow \Omega_{R/K}\rightarrow 0
\]
gives rise to the exact sequence
\[
(\Omega_{S/K}\otimes_S R)^*\to U^* \to T^1(R)\to 0
\]
of $\ZZ H$-graded modules.  This exact sequence may serve as the definition of $T^1(R)$, namely, to be  the cokernel of $(\Omega_{S/K}\otimes_S R)^*\to U^*$.

Let $f_{v_1},\ldots, f_{v_s}$ be a system of generators of $I_H$, where we may assume that for $r\leq s$, the elements $v_1,\ldots,v_r$ form a  basis of $L$. In general $s$ is much larger than $r$. Observe that the elements $df_{v_1},\ldots,df_{v_s}$ form a system of generators of $U$.

We let $F$ be a free graded $R$-module with basis $g_1,\ldots,g_s$ such that $\deg g_i=\deg df_{v_i}$ for $i=1,\ldots,s$, and define the $R$-module epimorphism $F\to U$ by $g_i\mapsto df_{v_i}$ for $i=1,\ldots,s$. The kernel of $F\to U$ we denote by $C$. The composition $F\to  \Omega_{S/K}\otimes_SR$ of the epimorphism $F\to U$ with the inclusion map $U\to  \Omega_{S/K}\otimes_SR$ will be denoted by $\delta$. We identify $U^*\subset F^*$ with its image in $F^*$. Then $T^1(R)=U^*/\Im \delta^*$ and $U^*$ is the submodule of $F^*$ consisting of all $\varphi\in F^*$ with $\varphi(C)=0$.

We first describe the $\ZZ H$-graded components of $U^*$. Let $a\in \ZZ H$. We denote by $KL$  the $K$-subspace of $K^n$ spanned  by $v_1,\ldots,v_s$  and by $KL_a$ the $K$-subspace of $KL$ spanned by the vectors  $v_i$ with $i\notin \MF_a$. Here the set $\MF_a$ is defined to be
\[
\MF_a=\{i\in[s]\: a+h(v_i)\in H\}.
\]
 Then we have

\begin{Theorem} \label{F1}
For all $a\in \ZZ H$, we have
\[
\dim_K (U^*)_a=\dim_K KL-\dim_K KL_a.
\]
\end{Theorem}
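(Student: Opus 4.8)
The plan is to reduce the computation of $(U^*)_a$ to linear algebra over $K$ carried out inside the localization of $R$ at its generic point. Write $Q$ for the field of fractions of $R=K[H]$; it is $\ZZ H$-graded with one-dimensional components $Q_b=Kt^b$ for \emph{every} $b\in\ZZ H$, whereas $R_b=Kt^b$ when $b\in H$ and $R_b=0$ otherwise. Since $\deg g_i=h(v_i)$, so that $\deg g_i^*=-h(v_i)$, the degree-$a$ component of $F^*\tensor_R Q=\Dirsum_{i=1}^s Q g_i^*$ is the $s$-dimensional space $\Dirsum_{i=1}^s Q_{a+h(v_i)}g_i^*$, which I identify with $K^s$ via $\sum_i\mu_i t^{a+h(v_i)}g_i^*\mapsto(\mu_1,\dots,\mu_s)$. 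Under this identification $(F^*)_a$ becomes the coordinate subspace $\{(\mu_i):\mu_i=0\text{ for }i\notin\MF_a\}$, because $R_{a+h(v_i)}\neq0$ precisely when $i\in\MF_a$; in particular $\dim_K(F^*)_a=\abs{\MF_a}$, and the whole argument now takes place inside $K^s$.

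Next I would give a closed form for $U^*$ as a submodule of $F^*$. Recall $U^*=\{\varphi\in F^*:\varphi(C)=0\}$ with $C=\Ker(\delta\colon F\to\Omega_{S/K}\tensor_S R)$. Identifying $F^*=R^s$ and writing $M$ for the matrix of $\delta$, the condition $\varphi(C)=0$ says that the vector $\varphi$ is orthogonal to every element of the null space of $M$ over $R$. Because $R$ is a domain, $\Ker_R M$ is saturated and $\Ker_R M\tensor_R Q=\Ker_Q M$, so this orthogonality is equivalent to orthogonality to $\Ker_Q M$, i.e. to $\varphi$ lying in the $Q$-row space of $M$. That row space is exactly $\Im(\delta^*\tensor_R Q)$, spanned by $\rho_j:=\delta^*\bigl((dx_j)^*\bigr)=\sum_i v_i(j)\,t^{h(v_i)-h_j}g_i^*$ for $j\in[n]$. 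Hence
\[
U^*=F^*\sect\Im(\delta^*\tensor_R Q),\qquad (U^*)_a=(F^*)_a\sect\bigl(\Im(\delta^*\tensor_R Q)\bigr)_a .
\]
Pinning down this equality — in particular verifying that orthogonality to $\Ker_R\delta$ and to $\Ker_Q\delta$ coincide — is the step I expect to be the main obstacle, since it is precisely where the domain hypothesis and the exactness of localization are needed.

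It then remains to identify the second factor and compute the intersection. A homogeneous element of $\Im(\delta^*\tensor_R Q)$ of degree $a$ has the form $\sum_j\beta_j t^{a+h_j}\rho_j$ with $\beta=(\beta_1,\dots,\beta_n)\in K^n$, and a short computation gives its $g_i^*$-coefficient as $t^{a+h(v_i)}\langle\beta,v_i\rangle$. Thus, in the coordinates above, $\bigl(\Im(\delta^*\tensor_R Q)\bigr)_a$ is the image of the $K$-linear map $K^n\to K^s$, $\beta\mapsto(\langle\beta,v_i\rangle)_{i=1}^s$, whose rank is $\dim_K KL$. Intersecting with the coordinate subspace $(F^*)_a$ forces $\langle\beta,v_i\rangle=0$ for all $i\notin\MF_a$, that is $\beta\in(KL_a)^\perp$, where $KL_a=\Span(v_i:i\notin\MF_a)$. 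Therefore $(U^*)_a$ is the image of $(KL_a)^\perp$ under the same map, whose kernel is $(KL)^\perp$; and since $KL_a\subseteq KL$ gives $(KL)^\perp\subseteq(KL_a)^\perp$, the rank–nullity theorem yields
\[
\dim_K(U^*)_a=\dim_K(KL_a)^\perp-\dim_K(KL)^\perp=(n-\dim_K KL_a)-(n-\dim_K KL),
\]
which is exactly $\dim_K KL-\dim_K KL_a$, as claimed.
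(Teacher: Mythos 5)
Your proof is correct, and it takes a genuinely different route from the paper's. The paper never leaves $R$: it characterizes $(U^*)_a$ as the vectors $\mu\in K^s$ supported on $\MF_a$ and orthogonal to $W=\Ker(K^s\to KL)$, which forces a direct analysis of homogeneous syzygies $z\in C_b$; the delicate direction of the equivalence (\ref{state}) is handled there by the ad hoc degree $b_0=\sum_{i=1}^s h(v_i)+a_-$, in which every relation among the $df_{v_i}$ becomes visible at once, and the dimension count is then extracted from a $3\times 3$ commutative diagram. You sidestep the syzygy analysis entirely by passing to the generic fiber: since $R$ is a domain and localization is flat, the condition $\varphi(C)=0$ can be tested after tensoring with the fraction field, where it reduces to the classical fact that the annihilator of $\Ker M$ is the row space of $M$ — your saturation argument is precisely what replaces the paper's $b_0$-trick, and your closing rank–nullity computation on $\beta\mapsto(\langle\beta,v_i\rangle)_{i=1}^s$ replaces the diagram chase. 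One small repair is needed: the full field of fractions $Q$ is \emph{not} $\ZZ H$-graded (it is not the direct sum of the spaces $Kt^b$); you should invert only the nonzero homogeneous elements of $R$, obtaining the graded quotient ring $K[\ZZ H]\subseteq Q$, which is a graded field with $b$-th component $Kt^b$ for every $b\in \ZZ H$ — then the orthogonality test may be performed in $Q$ while all graded bookkeeping (in particular your claim that a degree-$a$ element of the row space has the form $\sum_j\beta_j t^{a+h_j}\rho_j$ with $\beta_j\in K$) takes place in $K[\ZZ H]$, and nothing else in your argument changes. As for what each approach buys: the paper's longer proof produces the explicit component-wise description of elements of $(U^*)_a$ that is reused later (for instance the class $\alpha$ constructed in the proof of Theorem \ref{sufficient}), whereas your argument is shorter and more conceptual, and it displays $(\Im\delta^*)_a$ inside the same picture — it is the image of the coordinate vectors $\epsilon_j$, $j\in \MG_a$, which lie in $(KL_a)^\perp$ by the lemma following Corollary \ref{inH} — so that Proposition \ref{F2} and Corollary \ref{rigid} become immediate byproducts.
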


\begin{proof} Let $\sigma_1,\ldots ,\sigma_s $ be the canonical basis of $K^s$ and $W\subset K^s$ be the kernel of the $K$-linear map $K^s\to KL$ with $\sigma_i\mapsto v_i$ for $i=1,\ldots, s$.

We will show that
\begin{eqnarray}
\label{equal}
\hspace{1cm}(U^*)_a\iso \{\mu\in K^s\:\; \text{$\mu(i)=0$  for $i\in [s]\setminus \MF_a$ and $\langle \mu,  \lambda\rangle=0$  for all $\lambda\in W$}\},
\end{eqnarray}
as $K$-vector space.

Assuming this isomorphism has been proved, let $X_a$ be the image of $W\subset K^s$ under the canonical projection $K^s\to V_a=\Dirsum_{i\in \MF_a }K\sigma_i$. Then (\ref{equal}) implies  that $(U^*)_a$ is isomorphic to the orthogonal complement of $X_a$ in $V_a$.  Thus,
\begin{eqnarray}
\label{step}
\dim_K(U^*)_a=|\MF_a|-\dim_KX_a.
\end{eqnarray}
Let $Z_a=\Dirsum_{i\not\in\MF_a}K\sigma_i$ and $Y_a$ the cokernel of $X_a\to V_a$. Then we obtain a commutative diagram with exact rows and columns
\begin{eqnarray*}
\label{diagram}
\begin{CD}
&&& 0 &  &&   & 0 &  &&   & 0 &\\
&&& @VVV& & @VVV &  & @VVV & \\
0 @>>> &W\sect Z_a& @>>> & Z_a &@>>> &KL_a& @>>> 0\\
&&& @VVV& & @VVV &  & @VVV & \\
0 @>>> &W& @>>> & K^s&@>>> &KL& @>>> 0\\
&&& @VVV& & @VVV &  & @VVV & \\
0 @>>> &X_a& @>>> & V_a&@>>> &Y_a& @>>> 0\\
&&& @VVV& & @VVV &  & @VVV & \\
&&& 0 &  &&   & 0 &  &&   & 0 &\\
\end{CD}
\end{eqnarray*}
Now (\ref{step}) implies that $\dim_K(U^*)_a=\dim_K Y_a$, and the diagram shows that $\dim_K Y_a =\dim_K KL-\dim_K KL_a$.

\medskip
It remains to prove the isomorphism (\ref{equal}). Observe that $(U^*)_a=\{\varphi\in (F^*)_a\:\; \varphi(C)=0\}$, where $C$ is the kernel of $F\to U$. Let $\varphi\in (F^*)_a$. Then  $\varphi= \sum_{i=1}^s\varphi(g_i)g_i^*$, where $g_1^*,\ldots,g_s^*$ is the basis of $F^*$ dual to $g_1,\ldots,g_s$.

Since $\deg g_i^*=-\deg df_{v_i}=-h(v_i)$, it follows that $\varphi\in (F^*)_a$ if and only if $\varphi(g_i)= \mu(i)t^{a+h(v_i)}$ with $\mu(i)\in K$ and $\mu(i)=0$ if $a+h(v_i)\not \in H$. Hence
\begin{eqnarray*}
(U^*)_a\iso \{\mu\in K^s\: \text{$\mu(i)=0$ for $i\in [s]\setminus \MF_a$ and $(\sum_{i\in \MF_a}\mu(i)t^{a+h(v_i)}g_i^*)(C)=0$}\}.
\end{eqnarray*}
In order to complete the proof of (\ref{equal}) we only need to prove the following statement:
  \begin{eqnarray}
  \label{state}
  (\sum_{i=1}^s\mu(i)t^{a+h(v_i)}g_i^*)(C)=0 \mbox{\ if and only if\  }  \langle \mu,\lambda\rangle=0 \mbox{\  for all  \  } \lambda\in W.
\end{eqnarray}

Let $z\in C_b$ for some $b\in  H$.  Then $z= \sum_{i\in [s]}\lambda(i) t^{b-h(v_i)}g_i$ with $\lambda(i)\in K$ for $i=1,\ldots,s$ and $\lambda(i)=0$ if $b-h(v_i)\notin H$ since $z\in \MF_a$. Moreover,  since $z\in \Ker(F\to U)$ it follows that $\lambda(1)t^{b-h(v_1)}df_{v_1}+\cdots+\lambda(s)t^{b-h(v_s)}df_{v_s}=0$. This implies that
 \[
\sum_{i\in [s]\atop b-h(v_i)\in H}\sum_{j\in [n]}\lambda(i)t^{b-h(v_i)}v_i(j)t^{h(v_i)-h_j}dx_j=\sum_{j\in [n]}(\sum_{i\in [s] \atop b-h(v_i)\in H}\lambda(i)v_i(j)t^{b-h_j})dx_j=0.
\]
Note that  if $b-h_j\notin H$, then for all $i\in [s]$ with $b-h(v_i)\in H$, one has $h(v_i)-h_j\notin H$ and so $v_i(j)=0$. Here we use the definition of $h(v_i)$, see (\ref{h(v)}). Therefore,  $\sum_{i\in [s], b-h(v_i)\in H}\lambda(i)v_i(j)=0$ for $j=1,\ldots,n$. This implies $\sum_{i\in [s], b-h(v_i)\in H}\lambda(i)v_i=0$.

In conclusion we see that
\begin{eqnarray*}
\label{iff}
\sum_{i\in [s], b-h(v_i)\in H}\lambda(i) t^{b-h(v_i)}g_i\in C_b \quad\text{ if and only if } \sum_{i\in [s], b-h(v_i)\in H}\lambda(i)v_i=0.
\end{eqnarray*}
This particularly  implies that if $z= \sum_{i\in [s]}\lambda(i) t^{b-h(v_i)}g_i\in C_b$, then $\lambda=(\lambda(1),\ldots,\lambda(s))\in W$.

Since
\[
(\sum_{i=1}^s\mu(i)t^{a+h(v_i)}g_i^*)(\sum_{i\in [s], b-h(v_i)\in H}\lambda(i) t^{b-h(v_i)}g_i)=
(\sum_{i=1}^s\mu(i)\lambda(i))t^{a+b},
\]
it follows that $(\sum_{i=1}^s\mu(i)t^{a+h(v_i)}g_i^*)(C_b)=0$ if and only if either $a+b\notin H$ or $\langle \mu,\lambda\rangle =\sum_{i=1}^s\mu(i)\lambda(i)=0$ for all $\lambda\in W$ satisfying  $\lambda(i)=0$ for all $i$ with $b-h(v_i)\notin H$. In particular, we have if $\langle \mu,\lambda\rangle=0 \mbox{\  for all  \  } \lambda\in W$, then $(\sum_{i=1}^s\mu(i)t^{a+h(v_i)}g_i^*)(C)=0$.

\medskip
 For the converse, we assume that $(\sum_{i=1}^s\mu(i)t^{a+h(v_i)}g_i^*)(C)=0$. Write $a=a_+-a_-$ with $a_+\in H$ and $a_-\in H$, and  set $b_0=\sum_{i=1}^sh(v_i)+a_-$.  Since $a+b_0\in H$ and
 $b_0-h(v_i)\in H$ for all $i\in [s]$, and since $(\sum_{i=1}^s\mu(i)t^{a+h(v_i)}g_i^*)(C_{b_0})=0$,  it follows that $(\mu,\lambda)=0$ for all $\lambda\in W$. Therefore the statement (\ref{state}) has been proved and this completes the proof.
  \end{proof}

\medskip
Now for any $a\in \ZZ H$ we want to determine the dimension of $(\Im \delta^*)_a$. We observe that the $\ZZ H$-graded $R$-module $\Im \delta^*$ is generated by the elements
\[
\delta^*((dx_i)^*)=\sum_{j=1}^s(\partial f_{v_j}/\partial x_i)g_j^*=
\sum_{j=1}^s v_j(i)t^{h(v_j)-h_i}g_j^*.
\]
Note that $\deg \delta^*((dx_i)^*)=-h_i$ for $i=1,\ldots ,n$.

For $i=1,\ldots,n$ we set  $w_i=(v_1(i),\ldots,v_s(i))$, and  for  $a\in \ZZ H$ we let  $KD_a$ be the $K$-subspace of $K^s$ spanned by the vectors $w_i$ for which $i\in \MG_a$. Here the set $\MG_a$ is defined to be
\[
\MG_a=\{i\in[n]\:\; a+h_i\in H\}.
\]

 \begin{Proposition}
 \label{F2}
 Let  $a\in \ZZ H$. Then  \[\dim_K (\Im \delta^*)_a=\dim_K KD_a.\]
 \end{Proposition}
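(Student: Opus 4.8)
The plan is to describe $(\Im \delta^*)_a$ explicitly as the image of a linear map between coordinate spaces and then read off its dimension. Since $\Im\delta^*$ is generated by the elements $\delta^*((dx_i)^*)$ of degree $-h_i$, a homogeneous element of $(\Im\delta^*)_a$ is an $R$-linear combination $\sum_{i=1}^n r_i\,\delta^*((dx_i)^*)$ in which each $r_i$ lies in the graded component $R_{a+h_i}$. As $R=K[H]$ is $H$-graded with $R_c=Kt^c$ when $c\in H$ and $R_c=0$ otherwise, only the indices $i\in\MG_a$ contribute, and for such $i$ we may write $r_i=c_i t^{a+h_i}$ with $c_i\in K$. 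Substituting the formula for $\delta^*((dx_i)^*)$ and collecting the coefficient of each $g_j^*$ gives
\[
\sum_{i\in\MG_a}c_i t^{a+h_i}\,\delta^*((dx_i)^*)=\sum_{j=1}^s\Bigl(\sum_{i\in\MG_a}c_i v_j(i)\Bigr)t^{a+h(v_j)}g_j^*.
\]

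First I would record the key vanishing statement: if $i\in\MG_a$ and $j\notin\MF_a$, then $v_j(i)=0$. Indeed, if $v_j(i)\neq 0$ then $h_i$ occurs with a strictly positive coefficient in one of the two expressions for $h(v_j)$ in (\ref{h(v)}), so $h(v_j)-h_i\in H$; combined with $a+h_i\in H$ this yields $a+h(v_j)=(a+h_i)+(h(v_j)-h_i)\in H$, i.e.\ $j\in\MF_a$, a contradiction. This is the same mechanism already exploited in the proof of Theorem~\ref{F1}, and I expect it to be the crux of the argument, since everything else is bookkeeping.

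Next I would use this to identify the vector space. For $j\notin\MF_a$ the monomial $t^{a+h(v_j)}$ vanishes in $R$, but by the vanishing statement the corresponding coefficient $\sum_{i\in\MG_a}c_i v_j(i)$ is already zero, so no information is lost; meanwhile for $j\in\MF_a$ the elements $t^{a+h(v_j)}g_j^*$ are nonzero and $K$-linearly independent in $F^*$. Hence the $K$-linear map
\[
\Phi\: K^{\MG_a}\To K^{\MF_a},\qquad (c_i)_{i\in\MG_a}\longmapsto\Bigl(\sum_{i\in\MG_a}c_i v_j(i)\Bigr)_{j\in\MF_a},
\]
induces a $K$-isomorphism of $\Im\Phi$ onto $(\Im\delta^*)_a$, so that $\dim_K(\Im\delta^*)_a=\dim_K\Im\Phi$. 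By construction $\Im\Phi$ is spanned by the projected columns $\tilde w_i=(v_j(i))_{j\in\MF_a}$ for $i\in\MG_a$.

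Finally, the vanishing statement shows that for $i\in\MG_a$ the full column $w_i=(v_1(i),\ldots,v_s(i))$ is supported on the coordinates indexed by $\MF_a$, so the projection $w_i\mapsto\tilde w_i$ is injective on $KD_a=\Span\{w_i\:\ i\in\MG_a\}$ and hence $\dim_K\Span\{\tilde w_i\:\ i\in\MG_a\}=\dim_K KD_a$. Combining the displays gives $\dim_K(\Im\delta^*)_a=\dim_K KD_a$, as claimed.
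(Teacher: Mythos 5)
Your proof is correct and takes essentially the same route as the paper: both identify $(\Im\delta^*)_a$ with the $K$-span of the elements $t^{a+h_i}\delta^*((dx_i)^*)$ for $i\in\MG_a$, expand them in the dual basis $g_1^*,\ldots,g_s^*$, and reduce the dimension count to linear algebra on the columns $w_i=(v_1(i),\ldots,v_s(i))$. The one point you handle more explicitly is the vanishing statement $v_j(i)=0$ for $i\in\MG_a$ and $j\notin\MF_a$ --- which the paper records separately as the unnamed lemma following Corollary~\ref{rigid} and uses only implicitly in its own proof of Proposition~\ref{F2} --- so your write-up makes airtight the step that the coefficients of $g_j^*$ vanish for all $j$, not merely for $j\in\MF_a$.
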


 \begin{proof}  The $K$-subspace $(\Im \delta^*)_a\subset (F^*)_a$ is spanned by the vectors
\[
t^{a+h_i}\delta^*((dx_i)^*)=\sum_{j=1}^sv_j(i)t^{a+h(v_j)}g_j^*
\]
with $i\in \MG_a$.

The desired formula $\dim_K (\Im \delta^*)_a$ follows once we have shown that
\[\sum_{i\in \MG_a}\mu(i)t^{a+h_i}\delta^*((dx_i)^*)=0 \text{ if and only if } \sum_{i\in \MG_a}\mu(i)w_i=0.
\]
Here $\mu(i)\in K$ for any $i\in \MG_a$.
To prove it we notice that
\[
\sum_{i\in \MG_a}\mu(i)t^{a+h_i}\delta^*((dx_i)^*) =\sum_{i\in \MG_a}\mu(i)(\sum_{j=1}^sv_j(i)t^{a+h(v_j)}g_j^*)=
\sum_{j=1}^s(\sum_{i\in \MG_a}\mu(i)v_j(i)t^{a+h(v_j)}g_j^*).
\]
Thus $\sum_{i\in \MG_a}\mu(i)t^{a+h_i}\delta^*((dx_i)^*)=0$ if and only if  $\sum_{i\in \MG_a}\mu(i)v_j(i)=0$ for $j=1,\ldots,s$. Since $v_j(i)=w_i(j)$, this is the case if and only if $\sum_{i\in \MG_a}\mu(i)w_i=0$.
 \end{proof}

\begin{Corollary}
\label{rigid}
Let $a\in \ZZ H$. Then $\dim_K KD_a+\dim_K KL_a\leq \dim_K KL$.  Equality holds if and only if $T^1(R)_a=0$.
\end{Corollary}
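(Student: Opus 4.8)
The plan is to read off $\dim_K T^1(R)_a$ directly as the difference $\dim_K(U^*)_a-\dim_K(\Im\delta^*)_a$, and then interpret the two sides of the claimed inequality through the formulas of Theorem~\ref{F1} and Proposition~\ref{F2}. The starting point is the construction of $T^1(R)$ itself: by definition $T^1(R)=U^*/\Im\delta^*$, so $\Im\delta^*$ is a $\ZZ H$-graded submodule of $U^*$. Since passing to the $a$-th graded component is exact on $\ZZ H$-graded modules, we get $(\Im\delta^*)_a\subseteq(U^*)_a$ and $T^1(R)_a\iso(U^*)_a/(\Im\delta^*)_a$. As all three spaces are finite-dimensional over $K$, this gives
\[
\dim_K T^1(R)_a=\dim_K(U^*)_a-\dim_K(\Im\delta^*)_a.
\]

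Next I would substitute the two dimension formulas already established. Theorem~\ref{F1} yields $\dim_K(U^*)_a=\dim_K KL-\dim_K KL_a$, while Proposition~\ref{F2} yields $\dim_K(\Im\delta^*)_a=\dim_K KD_a$. Combining these with the identity above produces
\[
\dim_K T^1(R)_a=\dim_K KL-\dim_K KL_a-\dim_K KD_a.
\]
Since a vector-space dimension is always nonnegative, $\dim_K T^1(R)_a\geq 0$, and rearranging the last equation gives exactly $\dim_K KD_a+\dim_K KL_a\leq\dim_K KL$. Moreover, this inequality is an equality if and only if $\dim_K T^1(R)_a=0$, that is, if and only if $T^1(R)_a=0$, which is the asserted equivalence.

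In this corollary there is essentially no substantive obstacle: the statement is a purely formal consequence of the two preceding results once the quotient description of $T^1(R)$ is in hand. The only point demanding (entirely routine) care is the compatibility of the grading with the quotient defining $T^1(R)$ — concretely, that $(\Im\delta^*)_a$ genuinely sits inside $(U^*)_a$ so that the dimensions subtract as claimed. This is immediate from the exactness of taking homogeneous components of $\ZZ H$-graded modules, and so the proof reduces to the single arithmetic identity displayed above together with the nonnegativity of $\dim_K T^1(R)_a$.
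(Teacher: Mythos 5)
Your proof is correct and is exactly the argument the paper intends: the corollary is stated without proof precisely because it follows formally from Theorem~\ref{F1} and Proposition~\ref{F2} via $\dim_K T^1(R)_a=\dim_K(U^*)_a-\dim_K(\Im\delta^*)_a$, which is the identity you use. Your added remark that $(\Im\delta^*)_a\subseteq(U^*)_a$ by the graded structure is the right (routine) justification for the dimension subtraction, and nothing further is needed.
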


Summarizing our discussions of this section we observe that all information which is needed to compute $\dim_KT^1(R)_a$ can be obtained  from the ($s\times n$)-matrix
\[
A_H= \left( \begin{array}{cccc}
v_1(1) & v_1(2) & \ldots & v_1(n) \\
v_2(1) & v_2(2) & \ldots & v_2(n)\\
\vdots  & \vdots  &  & \vdots \\
v_s(1) & v_s(2) & \ldots & v_s(n)
\end{array} \right).
\]
 Indeed, $\dim_K T^1(K[H])_a$  can be computed as follows: let $l=\rank A_H$, $r_a$ the rank of the submatrix of $A_H$ whose rows  are  the $i$th rows of $A_H$ for which $a+h(v_i)\not \in H$, and let $c_a$  be the rank of the submatrix of $A_H$ whose columns  are  the $j$th columns of $A_H$ for which $a+h_j\in H$. Then
\begin{eqnarray}
\label{finalformua}
\dim_K T^1(K[H])_a=l-l_a-d_a.
\end{eqnarray}

\medskip

\begin{Corollary}
\label{inH}
Suppose  $a\in H$. Then $T^1(R)_a=0$.
\end{Corollary}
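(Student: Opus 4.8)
The plan is to apply Corollary~\ref{rigid}, which reduces the vanishing $T^1(R)_a=0$ to the single rank identity $\dim_K KD_a+\dim_K KL_a=\dim_K KL$. Thus I only need to determine the two subspaces $KL_a$ and $KD_a$ under the hypothesis $a\in H$, and the entire argument will rest on the fact that $H$ is closed under addition.

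First I would observe that since $a\in H$ and each $h(v_i)$ is a nonnegative integer combination of the generators $h_1,\dots,h_n$ (by its definition (\ref{h(v)})), the sum $a+h(v_i)$ again lies in $H$ for every $i\in[s]$. Hence $\MF_a=[s]$, so there is no index $i\notin\MF_a$, and by definition $KL_a$ is the zero subspace; that is, $\dim_K KL_a=0$. The same closure property shows that $a+h_i\in H$ for every $i\in[n]$, so $\MG_a=[n]$. Consequently $KD_a$ is spanned by all of the vectors $w_1,\dots,w_n$, which are precisely the columns of the matrix $A_H$, and its dimension is therefore the column rank of $A_H$. On the other hand, $KL$ is spanned by $v_1,\dots,v_s$, the rows of $A_H$, so $\dim_K KL$ is the row rank of $A_H$. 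Since row rank equals column rank, I conclude $\dim_K KD_a=\dim_K KL$.

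Combining the two computations yields $\dim_K KD_a+\dim_K KL_a=\dim_K KL+0=\dim_K KL$, so equality holds in Corollary~\ref{rigid} and therefore $T^1(R)_a=0$. There is no serious technical obstacle here; the only point requiring care is the correct identification of $KD_a$ with the column space of $A_H$ and of $KL$ with its row space, so that the elementary equality of row and column rank can be invoked. Everything else follows directly from the closure of the semigroup $H$ under addition.
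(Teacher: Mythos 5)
Your proof is correct and follows essentially the same route as the paper: both apply Corollary~\ref{rigid} after observing that closure of $H$ under addition gives $\MG_a=[n]$, so that $KD_a$ is the column space of $A_H$ and $\dim_K KD_a=\rank A_H=\dim_K KL$. Your extra verification that $\MF_a=[s]$ and hence $KL_a=0$ is harmless but not needed, since the inequality of Corollary~\ref{rigid} already forces $\dim_K KL_a=0$ once $\dim_K KD_a=\dim_K KL$.
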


\begin{proof}
Since $a\in H$, it follows that $\MG(a)=[n]$ and $\dim_KD_a=\dim_KKL=\rank A_H$. Thus the assertion follows from Corollary~\ref{rigid}.
\end{proof}

\medskip
The inequality of Corollary~\ref{rigid} can also be deduced from the following lemma.

\begin{Lemma} Fix $a\in \ZZ H$. Then $v_i(j)=0$ for every pair $i,j$ with $i\notin \mathcal{F}_a$ and $j\in \mathcal{G}_a$.
\end{Lemma}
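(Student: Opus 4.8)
The plan is to prove the contrapositive: fixing $i\notin\mathcal{F}_a$ and $j\in\mathcal{G}_a$, I would assume $v_i(j)\neq 0$ and derive $a+h(v_i)\in H$, contradicting $i\notin\mathcal{F}_a$. The hypothesis $j\in\mathcal{G}_a$ gives $a+h_j\in H$, and this is the only place the condition on $j$ enters; the condition on $i$ enters only as the target contradiction. So everything reduces to showing $a+h(v_i)\in H$ whenever $v_i(j)\neq 0$ and $a+h_j\in H$.

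The key structural input is the dual description of $h(v_i)$ recorded in (\ref{h(v)}): it equals both $\sum_{k:\,v_i(k)\geq 0}v_i(k)h_k$ and $\sum_{k:\,v_i(k)\leq 0}(-v_i(k))h_k$, each a nonnegative integer combination of the generators. The point is that, whatever the sign of $v_i(j)$, there is one of these two representations in which $h_j$ occurs with a strictly positive coefficient $c=\abs{v_i(j)}\geq 1$: take the first representation if $v_i(j)>0$ and the second if $v_i(j)<0$. In either case I can write $h(v_i)=c\,h_j+w$, where $w$ collects the remaining terms of the chosen sum and is again a nonnegative integer combination of the $h_k$, so that $w=0$ or $w\in H$.

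Now I would peel off a single copy of $h_j$ and regroup:
\[
a+h(v_i)=(a+h_j)+(c-1)h_j+w.
\]
Since $c\geq 1$, the quantity $(c-1)h_j+w$ is a nonnegative integer combination of $h_1,\ldots,h_n$, hence is either $0$ or an element of $H$. As $a+h_j\in H$ by hypothesis and $H$ is closed under addition, in both cases $a+h(v_i)\in H$. This contradicts $i\notin\mathcal{F}_a$, forcing $v_i(j)=0$.

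The computation is routine; the only point needing a little care is the degenerate case $c=1$ with $w=0$, where the regrouping collapses to $a+h(v_i)=a+h_j$ and one simply invokes $a+h_j\in H$ directly instead of the closure step (this also shows the argument never needs $0\in H$). I expect the main, and really the only, subtlety to be recognizing that (\ref{h(v)}) furnishes two nonnegative representations of $h(v_i)$ and selecting the correct one according to the sign of $v_i(j)$; once that is in place, semigroup closure finishes the proof.
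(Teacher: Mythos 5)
Your proof is correct and follows essentially the same route as the paper: assume $v_i(j)\neq 0$, use the appropriate one of the two representations of $h(v_i)$ in (\ref{h(v)}) to split off one copy of $h_j$, and conclude $a+h(v_i)=(a+h_j)+b\in H$ by semigroup closure, contradicting $i\notin\mathcal{F}_a$. The paper simply fixes the sign ``say $v_i(j)<0$'' where you treat both signs explicitly, and your remark on the degenerate case $c=1$, $w=0$ is a minor point the paper glosses over by allowing $b\in H$ to include $0$.
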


\begin{proof} Assume on the contrary that $v_i(j)\neq 0$, say $v_i(j)<0$, for some  $i\notin \mathcal{F}_a$ and $j\in \mathcal{G}_a$. Then $$h(v_i)=-\sum_{k\atop v_i(k)<0}v_i(k)h_k=h_j+b,
\mbox{\ where\ } b=\sum_{k\neq j\atop v_i(k)<0}-v_i(k)h_k+(-v_i(j)-1)h_j\in H.$$ Since $j\in \mathcal{G}_a$, we have $a+h_j\in H$ and so $a+h(v_i)=(a+h_j)+b\in H$. Consequently,  $i\in \mathcal{F}_a$, a contradiction.
\end{proof}

\section{Separable and inseparable saturated lattices}

In this section we study conditions under which an affine semigroup ring $K[H]$ is obtained from another affine semigroup ring $K[H']$ by specialization, that is, by reduction modulo a regular element. Of course we can always choose $H'=H\times \NN$ in which case $K[H']$ is isomorphic to the polynomial ring $K[H][y]$ over $K[H]$ in the variable $y$, and  $K[H]$ is obtained from $K[H']$ by reduction modulo the regular element $y$. This trivial case we do not consider as a proper solution of finding an $K[H']$ that specializes to $K[H]$. If no non-trivial $K[H']$ exists, which specializes to $K[H]$, then $H$ will be called inseparable and otherwise separable. It turns out that the separability of $H$ is  naturally phrased in terms of the relation lattice $L$ of $H$.

\medskip
Let $L\subset \ZZ^n$ be a subgroup of $\ZZ^n$. Such a subgroup is  often called a lattice. The ideal $I_L$ generated by all binomials $f_v$ with $v\in L$ is called the {\em lattice ideal} of $L$. The following properties are known to be equivalent:
\begin{enumerate}
\item[(i)] $\ZZ^n/L$ is torsionfree;
\item[(ii)] $I_L$ is a prime ideal;
\item[(iii)] there exists a semigroup $H$ such that $I_L=I_H$.
\end{enumerate}
A proof of these facts can be found for example in \cite{EH}. A lattice $L$ for which $\ZZ^n/L$ is torsionfree is called a {\em saturated lattice}.

Let $\epsilon_1,\ldots,\epsilon_n$ be the canonical basis of $\ZZ^n$ and $\epsilon_1,\ldots,\epsilon_n,\epsilon_{n+1}$ the canonical basis of $\ZZ^{n+1}$. Let $i\in [n]$. We denote by $\pi_i\:\; \ZZ^{n+1}\to \ZZ^n$ the group homomorphism with $\pi_i(\epsilon_j)=\epsilon_j$ for $j=1,\ldots,n$ and $\pi_i(\epsilon_{n+1})=\epsilon_i$.  For convenience we denote again by $\pi_i$ the $K$-algebra homomorphism $S[x_{n+1}]\to S$ with $\pi_i(x_j)=x_j$ for $j=1,\ldots,n$ and $\pi_i(x_{n+1})=x_i$.

\begin{Definition}
\label{separable}
{\em Let $L\subset \ZZ^n$ be a saturated lattice. We say that   $L$ is {\em $i$-separable} for some $i\in[n]$, if there exists a saturated lattice $L'\subset \ZZ^{n+1}$ such that
\begin{enumerate}
\item[(i)] $\rank L'=\rank L$;
\item [(ii)] $\pi_i(I_{L'})=I_L$;
\item[(iii)] there exists a minimal system of generators   $f_{w_1},\ldots, f_{w_s}$ of $I_{L'}$ such that the vectors  $(w_1(n+1),\ldots,w_s(n+1))$ and $(w_1(i),\ldots,w_s(i))$ are linearly independent.
\end{enumerate}
The lattice $L$ is called {\em $i$-inseparable} if it is not $i$-separable, and $L$ is called {\em inseparable} if it is $i$-inseparable for all $i$. Moreover, the lattice $L'$ satisfying (i)-(iii) is called an {\em $i$-separation lattice} for $L$. We also call  a semigroup $H$ and its toric ring {\em inseparable} if the relation lattice of $H$ is inseparable.}
\end{Definition}

\begin{Remark}
\label{china}
{\em
Suppose that $L'\subset \ZZ^{n+1}$ is an $i$-separation lattice for $L$. Let $I_{L'}\subset S[x_{n+1}]$ be the lattice ideal  of $L'$. It is easily seen that  $x_{n+1}-x_i\not\in I_{L'}$ because  $\rank L=\rank L'$. Indeed, if  $x_{n+1}-x_i\in I_{L'}$, then $S[x_{n+1}]/I_{L'}\cong S/I_L$,  and so  $\rank L'=  \height I_{L'}=\height I_L+1=\rank L+1$, contradicting Definition~\ref{separable}(i).  Moreover, $x_{n+1}-x_i$ is a non-zerodivisor of $S[x_{n+1}]/I_{L'}$ since $S[x_{n+1}]/I_{L'}$ is a domain. In particular, if $f_{w_1},\ldots,f_{w_s}$ is a minimal system of generators of $I_{L'}$, then $\pi_i(f_{w_1}),\ldots, \pi_i(f_{w_s})$ is a minimal system of generators of $I_L$,  (see Lemma~\ref{minimal} for the details).  This implies that
\[
w_j(i)w_j(n+1)\geq 0 \quad \text{for}\quad  j=1,\ldots,s.
\]
Indeed, $x_i$ divides $\pi_i(f_{w_j})$ if $w_j(i)w_j(n+1)< 0$. Since   $\pi_i(f_{w_j})$ is a minimal generator of $I_L$ and since $I_L$ is a prime ideal, the polynomial $\pi_i(f_{w_j})$ must be irreducible. So,  $w_j(i)w_j(n+1)< 0$ is not possible.

  Let $v_j=\pi_i(w_j)$ for $j=1,\ldots, s$. Since $w_j(i)w_j(n+1)\geq 0$  for $j=1,\ldots,s$, for all $j$  we have $\pi_i(f_{w_j})=f_{v_j}$. Hence $f_{v_1}\,\ldots,f_{v_s}$ is a minimal system of generators of $I_L$.
}
\end{Remark}

For an affine semigroup $H\subset \ZZ^m$ the semigroup ring $K[H]$ is standard graded, if and only if there exists a  linear form
$\ell=a_1z_1+a_2z_2+\cdots +a_mz_m$ in the polynomial ring  $\QQ[z_1,\ldots,z_m]$ such that $\ell(h_i)=1$ for all minimal generators $h_i$ of $H$.

\medskip
The following  result provides a necessary condition of $i$-inseparability. Recall from \cite{BH} that an affine semigroup $H$ is called {\em positive} if $H_0=\{0\}$, where $H_0$ is the set of invertible elements of $H$.

\begin{Theorem}
\label{sufficient}
Let $H$ be a positive  affine semigroup which is minimally generated by $h_1,\ldots,h_n$,  $L\subset \ZZ^n$ the relation lattice of $H$.  Suppose that $L$ is $i$-separable.
Then
$
T^1(K[H])_{-h_i}\neq 0.
$
In particular,  if  $K[H]$ is standard graded, then $L$ is inseparable, if  $T^1(K[H])_{-1}=0.$
\end{Theorem}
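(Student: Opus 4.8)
The plan is to produce, straight from the separation datum, an explicit nonzero element of $T^1(K[H])_{-h_i}$, using the linear-algebra description of $T^1$ from Section~1. Fix an $i$-separation lattice $L'\subset\ZZ^{n+1}$ for $L$, with a minimal system of generators $f_{w_1},\ldots,f_{w_s}$ of $I_{L'}$ as in Definition~\ref{separable}(iii), and set $v_j=\pi_i(w_j)$. By Remark~\ref{china} the binomials $f_{v_1},\ldots,f_{v_s}$ then form a minimal system of generators of $I_L$ and $w_j(i)w_j(n+1)\ge0$ for all $j$, so I may compute $T^1(K[H])$ with respect to this generating set. As candidate cocycle I take
\[
\mu=(w_1(n+1),\ldots,w_s(n+1))\in K^s,
\]
which records the direction of the first-order deformation underlying the $i$-separation. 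The aim is to show that $\mu$ represents a nonzero class in $T^1(K[H])_{-h_i}=(U^*)_{-h_i}/(\Im\delta^*)_{-h_i}$, both terms being viewed inside $K^s$ through the identifications in the proofs of Theorem~\ref{F1} and Proposition~\ref{F2}.

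First I would check $\mu\in(U^*)_{-h_i}$ using (\ref{equal}); this amounts to the support condition $\mu(j)=0$ for $j\notin\MF_{-h_i}$ and the orthogonality $\langle\mu,\lambda\rangle=0$ for all $\lambda\in W$. For the support condition, if $w_j(n+1)\ne0$ then the sign constraint $w_j(i)w_j(n+1)\ge0$ forces $x_i$ to divide the positive (or negative) monomial of $f_{v_j}$; dividing that monomial by $x_i$ gives a monomial of degree $h(v_j)-h_i$ with nonzero image in $K[H]$, so $h(v_j)-h_i\in H$, i.e.\ $j\in\MF_{-h_i}$. For the orthogonality the decisive point is condition~\ref{separable}(i): it makes $\pi_i$ restrict to an isomorphism $KL'\to KL$, where $KL'=\Span_K\{w_1,\ldots,w_s\}$. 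Indeed $\pi_i$ sends $KL'$ onto $KL=\Span_K\{v_1,\ldots,v_s\}$, and both spaces have dimension $\rank L'=\rank L$ (a generating set of relation binomials spans its lattice rationally, just as $\dim_K KL=\rank L$). Consequently any relation $\sum_j\lambda(j)v_j=0$ lifts to $\sum_j\lambda(j)w_j=0$ in $KL'$, and the $(n+1)$-st coordinate of this identity reads $\langle\mu,\lambda\rangle=\sum_j\lambda(j)w_j(n+1)=0$.

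Next I would show $\mu\notin(\Im\delta^*)_{-h_i}$. By the proof of Proposition~\ref{F2} this subspace is $KD_{-h_i}=\Span\{(v_1(l),\ldots,v_s(l)):l\in\MG_{-h_i}\}$, and here positivity is used in an essential way to prove $\MG_{-h_i}=\{i\}$. Indeed $i\in\MG_{-h_i}$ because $0\in H$; and for $l\ne i$, the relation $h_l-h_i\in H$ would give either $h_l=h_i$, impossible for distinct minimal generators, or $h_l=h_i+(h_l-h_i)$, writing the minimal generator $h_l$ as a sum of two nonzero elements of the positive semigroup $H$, which is likewise impossible. Hence $KD_{-h_i}$ is the line spanned by the $i$-th column $(v_1(i),\ldots,v_s(i))=(w_1(i),\ldots,w_s(i))+\mu$. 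Finally, condition~\ref{separable}(iii) asserts precisely that $\mu$ and $(w_1(i),\ldots,w_s(i))$ are linearly independent, equivalently that $\mu$ and the $i$-th column are linearly independent; thus $\mu\notin KD_{-h_i}$. Combining the two paragraphs, the class of $\mu$ in $T^1(K[H])_{-h_i}$ is nonzero.

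The ``in particular'' is then immediate: if $K[H]$ is standard graded via a linear form $\ell$ with $\ell(h_j)=1$ for all $j$, then $-h_i$ has ordinary degree $\ell(-h_i)=-1$, so $T^1(K[H])_{-h_i}$ is a $\ZZ H$-graded summand of $T^1(K[H])_{-1}$; hence $T^1(K[H])_{-1}=0$ forces $T^1(K[H])_{-h_i}=0$ for every $i$, and by the main assertion $L$ is inseparable. I expect the main obstacle to be the cocycle verification of the second paragraph: fixing the support condition through the sign constraint of Remark~\ref{china}, and above all exploiting the rank equality~\ref{separable}(i) to obtain the isomorphism $KL'\to KL$ that transports relations among the $v_j$ to relations among the $w_j$. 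The positivity computation $\MG_{-h_i}=\{i\}$ is the complementary ingredient that makes condition~\ref{separable}(iii) match nontriviality of the class exactly.
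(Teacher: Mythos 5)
Your proof is correct, and it reaches the paper's witness element by a genuinely different route in its first half. The paper does not verify the cocycle conditions by hand: it constructs the actual infinitesimal deformation $R''=R'/(x_{n+1}-x_i)^2R'$ (flat because $x_{n+1}-x_i$ is a non-zerodivisor on the domain $R'$), expands each $f_{w_j}$ modulo $(x_{n+1}-x_i)^2$ to read off the homomorphism $\varphi(f_{v_j}+I_L^2)=w_j(n+1)t^{h(v_j)-h_i}$, and obtains your $\mu$ as the induced element $\alpha=\sum_{j}w_j(n+1)t^{h(v_j)-h_i}g_j^*\in (U^*)_{-h_i}$ --- so membership in $(U^*)_{-h_i}$ comes for free from the deformation, rather than from checking (\ref{equal}) directly. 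Your replacement --- the support condition via the sign constraint $w_j(i)w_j(n+1)\geq 0$ of Remark~\ref{china}, and orthogonality to $W$ via injectivity of $\pi_i\colon KL'\to KL$ forced by $\rank L'=\rank L$ --- is sound and keeps the whole argument inside the linear algebra of Section~1, a real gain in self-containedness. It does, however, rest on a lemma the paper never needs: that $\dim_K KL=\rank L$ (and likewise for $L'$), i.e., that the exponent vectors of a binomial generating set of $I_L$ span the lattice rationally (in fact generate it). That fact is standard, but you should state it explicitly, and over fields of positive characteristic you need the saturatedness of $L$ (Smith normal form with unit elementary divisors) to see that the rank of the matrix of the $v_j$ does not drop modulo $p$; your parenthetical gloss is too quick on exactly this point. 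The second half of your argument --- positivity forcing $\MG_{-h_i}=\{i\}$, hence $(\Im \delta^*)_{-h_i}$ a line spanned by the $i$-th column $(w_1(i),\ldots,w_s(i))+\mu$, with condition (iii) of Definition~\ref{separable} giving $\mu\notin KD_{-h_i}$ --- coincides with the paper's, including the identity $v_j(i)=w_j(i)+w_j(n+1)$, as does your reduction of the standard graded ``in particular''. What the paper's route buys is conceptual clarity: the class is exhibited as the first-order class of an honest separation, in line with the paper's theme that separations are unobstructed deformations preserving the toric structure; what yours buys is independence from the dictionary between flat first-order deformations and $\Hom(I_L/I_L^2,R)$.
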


\begin{proof} Since  $L$ is $i$-separable, there exists a saturated lattice  $L'$  satisfying the conditions (i) and (ii) as given  in Definition~\ref{separable}. Since $x_{n+1}-x_i$ is a non-zerodivisor on $R'=S[x_{n+1}]/I_{L'}$ it follows that $R''=R'/(x_{n+1}-x_i)^2R'$ is an infinitesimal deformation of $R$ (which is isomorphic to $R''/((x_{n+1}-x_i)R''$).

Let $v_j=\pi_i(w_j)$ for $j=1,\ldots,s$. By Remark~\ref{china}, we have $\pi_i(f_{w_j})=f_{v_j}$ for $j=1,\ldots,s$ and $f_{v_1},\ldots,f_{v_s}$ is a minimal system of generators of $I_L$.

Note that $S[x_{n+1}]=S[x_{n+1}-x_i]$. We set $\epsilon$ to be the residue class of $x_{n+1}-x_i$ in $S[x_{n+1}-x_i]/(x_{n+1}-x_i)^2$. Then $S[x_{n+1}-x_i]/(x_{n+1}-x_i)^2=S[\epsilon]$. Let $\sigma\: S[x_{n+1}]\to S[\epsilon]$ the canonical epimorphism and let $J$ be the image of $I_{L'}$ in $S[\epsilon]$. Then $R''=S[\epsilon]/J$.

 In order to determine the generators of $J$, we fix a $j$ with $1\leq j\leq s$, and may assume that $w(n+1)\geq 0$ and $w(i)\geq 0$. Then modulo $(x_{n+1}-x_i)^2$, we obtain
\[
f_{w_j}=\prod_{1\leq k\leq n \atop w_j(k)\geq 0}x_k^{w_j(k)}x_{n+1}^{w_j(n+1)}-\prod_{1\leq k\leq n\atop w_j(k)< 0}x_k^{w_j(k)}\qquad\qquad\qquad\qquad\quad
\]
\[
\qquad\quad=\prod_{1\leq k\leq n\atop w_j(k)\geq 0}x_k^{w_j(k)}(x_i^{w_j(n+1)}+w_j(n+1)x_i^{w_j(n+1)-1}\epsilon)-\prod_{1\leq k\leq n\atop w_j(k)< 0}x_k^{w_j(k)}
\]
\[
=f_{v_j}+[w_j(n+1)(\prod_{1\leq k\leq n\atop v_j(k)\geq 0}x_k^{v_j(k)})/x_i]\epsilon.\qquad\qquad\qquad\qquad\qquad\quad
\]
For the second equality we used that  $x_{n+1}=\epsilon+x_i$ and $\epsilon^2=0$, and the third equality  is due to the fact $v_j(i)=w_j(i)+w_j(n+1)$.

The homomorphism $\varphi: I_L/I_L^2\rightarrow R$ corresponding to the infinitesimal deformation  $S[\epsilon]/J$ is  given by
\[
\varphi(f_{v_j}+I_L^2)=w_j(n+1)(\prod_{1\leq k\leq n\atop v_j(k)\geq 0}x_k^{v_j(k)})/x_i+I_L=w_j(n+1)t^{h(v_j)-h_i} \mbox{ for }j=1,\ldots,s,
\]
which induces  the  element $$\alpha=\sum_{1\leq j\leq s}w_j(n+1)t^{h(v_j)-h_i}g_j^*\in (U^*)_{-h_i}.$$

Since $H$ is positive it follows that $\MG_{-h_i}=\{i\}$, and this  implies  that $(\mathrm{Im}\delta^*)_{-h_i}=K\sum_{1\leq j\leq s}v_j(i)t^{h(v_j)-h_i}g_j^*$, see Proposition~\ref{F2}. Assume $\alpha\in(\mathrm{Im}\delta^*)_{-h_i}$. Then there exists $\lambda\in K$ such that
\[
(w_1(n+1),\ldots,w_s(n+1)) =\lambda (v_1(i),\ldots,v_s(i)).
\]
Since $v_j(i)=w_j(i)+w_j(n+1)$ for $j=1,\ldots,s$, and since by condition (iii) of Definition~\ref{separable} the vectors
$(w_1(n+1),\ldots,w_s(n+1)$ and $(w_1(i),\ldots,w_s(i))$ are linearly independent, we obtain a contradiction. Hence  $T^1(R)_{-h_i}\neq 0$, as required.
\end{proof}

As a first example of a separable lattice we consider the relation lattice of a numerical semigroup.

\begin{Discussion}
\label{discussion}
{\em
Let $H\subset \NN$ be the  numerical semigroup minimally generated by $h_1,h_2,h_3$ with $\gcd(h_1,h_2,h_3)=1$. Recall some facts from \cite{H}. For  $i=1,2,3$ let $c_i$ be the smallest integer such that $c_ih_i\in \NN h_k+ \NN h_\ell$, where $\{i,k,\ell\}=[3]$, and let $r_{ik}$ and $r_{i\ell}$  be nonnegative integers such that  $c_ih_i=r_{ik}h_k+r_{i\ell}h_{\ell}$. Denote by $L$  the relation lattice of $H$. Then the  three vectors
\[
v_1=(c_1,-r_{12},-r_{13}),\quad  v_2=(-r_{21},c_2,-r_{23}),\quad  v_3=(-r_{31},-r_{32},c_3)
\]
generate  $L$. We have   $v_1+v_2+v_3=0$  if
\begin{enumerate}
\item[(1)] all $r_{ij}\neq 0$,  or
\item[(2)] $v_1=(c_1,-c_2,0)$, $v_2=(0,c_2,-c_3)$ and $v_3=(-c_1,0,c_3)$.
\end{enumerate}
In case (1), $f_{v_1},f_{v_2},f_{v_3}$ is the unique minimal system of generators of $I_L$. In case (2), $f_{v_1}+f_{v_2}+f_{v_3}=0$, so that any two of the $f_{v_i}$'s minimally generate $I_L$.

 An example for (1) is the semigroup with generators  $3$, $4$ and $5$, and example for (2) is the semigroup with generators  $6$, $10$ and $15$.
\medskip
(3) If $v_1+v_2+v_3\neq 0$, then there exist  distinct integers $k, \ell\in [3]$ such that  $v_k+v_{\ell}=0$ and $r_{ij}\neq 0$  for $i\in [3]\setminus \{k,\ell\}$ and $j\in \{k,\ell\}$. In this case $I_L$ is minimally generated by $x_i^{c_i}-x_{k}^{r_{ik}}x_l^{r_{il}}$ and $x_{k}^{c_k}-x_{\ell}^{c_{\ell}}$.

 An example for (3) is the semigroup with generators $4$, $5$ and $6$.}
\end{Discussion}

 It is known and easy to prove that $R=K[H]$ is not rigid. Indeed, since $R$ is quasi-homogeneous, the Euler relations $\sum_{i=1}^n(\partial f/\partial x_i)x_i=(\deg f)f$ imply that there is an epimorphism $\chi\: \Omega_{R/K}\to \mm$ with $\chi(dx_i)\mapsto t^{h_i}$  where $\mm=(t^{h_1}, t^{h_2},t^{h_3})$ is the graded maximal ideal of $R$. Since $\rank \Omega_{R/K}=\rank \mm =1$, it follows that $C=\Ker \chi$ is a torsion module. Thus we obtain the following exact sequence
 \[
 0\to C \to \Omega_{R/K}\to \mm\to 0,
 \]
 which induces the long exact sequence
 \[
 \Hom_R(C, R)\to   \Ext^1_R(\mm, R)\to \Ext^1_R(\Omega_{R/K}, R).
 \]
 Since $R$ is a $1$-dimensional domain, $R$ is Cohen-Macaulay,  $\Hom_R(C, R)=0$ and $\Ext^1_R(\mm, R)\iso \mm^{-1}/R\neq 0$. It follows that  $\Ext^1_R(\Omega_{R/K}, R)\neq 0$. In other words,  $R$ is not rigid.

 Of course the same argument can be applied to any numerical semigroup generated by more than 1 element.

 \medskip
 We have seen that $K[H]$ is not rigid. The next result shows that the relation lattice  of $H$ is even $i$-separable for  $i\in [3]$ with $T^1(R)_{-h_i}\neq 0$. To prove this we need

\begin{Lemma}\label{minimal}  Let $L\subset \ZZ^n$ and $L'\subset \ZZ^{n+1}$ be saturated lattices which satisfy the conditions {\em (i) and (ii)} as given in Definition~\ref{separable}. Then
\begin{enumerate}
\item[(a)] $I_L$ and $I_{L'}$ have the same number of minimal generators;

\item[(b)] $f_{w_1},\ldots,f_{w_s}$  is a minimal system of generators of $I_{L'}$ if and only if  \\ $\pi_i(f_{w_1}),\ldots,\pi_i(f_{w_s})$  is a minimal system of generators of $I_{L}$.
\end{enumerate}
\end{Lemma}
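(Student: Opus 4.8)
The plan is to view $\pi_i$ as reduction modulo $g=x_{n+1}-x_i$ and to exploit that $g$ is a regular element on both $A=S[x_{n+1}]$ and $R'=A/I_{L'}$. First I would record the basic facts: under the identification $A/(g)\iso S$ the map $\pi_i$ is precisely the canonical surjection $A\to A/(g)$; the element $g$ is a nonzerodivisor on $A$ (a domain) and on $R'$; and $R'/gR'\iso R=S/I_L$. The regularity of $g$ on $R'$ is the argument already used in Remark~\ref{china}: if $g\in I_{L'}$ then $A/I_{L'}\iso S/I_L$ via $\pi_i$, which by (\ref{height=rank}) forces $\rank L'=\rank L+1$, contradicting Definition~\ref{separable}(i); since $L'$ is saturated, $I_{L'}$ is prime, so $R'$ is a domain and $g\notin I_{L'}$ is a nonzerodivisor on it. Condition (ii) gives $R'/gR'\iso S/\pi_i(I_{L'})=S/I_L=R$.

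The technical heart is the isomorphism $I_{L'}/gI_{L'}\iso I_L$ induced by $\pi_i$. To obtain it I tensor the exact sequence $0\to I_{L'}\to A\to R'\to 0$ with $A/(g)$. Resolving $A/(g)$ by $0\to A\xrightarrow{\,g\,}A\to A/(g)\to 0$ shows $\Tor_1^A(R',A/(g))=\Ker(R'\xrightarrow{\,g\,}R')=0$, because $g$ is $R'$-regular. Hence $0\to I_{L'}/gI_{L'}\to A/(g)\to R'/gR'\to 0$ is exact, and identifying $A/(g)=S$ and $R'/gR'=R$ through $\pi_i$ identifies $I_{L'}/gI_{L'}$ with $\Ker(S\to R)=I_L$; explicitly the isomorphism carries the class of $f\in I_{L'}$ to $\pi_i(f)$.

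Both parts now follow by reducing modulo the irrelevant maximal ideals $\mm=(x_1,\dots,x_{n+1})\subset A$ and $\mm_0=(x_1,\dots,x_n)\subset S$. Since $g\in\mm$ we have $gI_{L'}\subseteq\mm I_{L'}$, so dividing the isomorphism above once more by $\mm$ yields an isomorphism $\Psi\colon I_{L'}/\mm I_{L'}\iso I_L/\mm_0 I_L$ sending the residue of each $f\in I_{L'}$ to the residue of $\pi_i(f)$. Comparing $K$-dimensions gives at once that $I_L$ and $I_{L'}$ have the same minimal number of generators, which is (a). For (b) I use the standard fact that, after localizing at $\mm$, a family of elements of $I_{L'}$ is a minimal system of generators precisely when its residues form a $K$-basis of $I_{L'}/\mm I_{L'}$, and likewise for $I_L$ and $\mm_0$. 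Because $\Psi$ carries the residues of $f_{w_1},\dots,f_{w_s}$ to the residues of $\pi_i(f_{w_1}),\dots,\pi_i(f_{w_s})$, one family of residues is a basis if and only if the other is; this is exactly the asserted equivalence.

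The step needing the most care is the Nakayama/minimal-generator bookkeeping, because $g$ is not homogeneous for the natural $\ZZ^{n+1}/L'$-grading — indeed $g\notin I_{L'}$ means $\epsilon_{n+1}-\epsilon_i\notin L'$ — so I would measure minimal generation only through residues modulo $\mm$, equivalently by localizing at $\mm$, where $g\in\mm A_{\mm}$ and ordinary Nakayama applies. This is harmless since $I_L,I_{L'}$ and all the binomials $f_w$ lie in the respective irrelevant maximal ideals, so the minimal number of generators of each ideal is computed by $\dim_K$ of the quotient by that maximal ideal; the element $g$ enters a Nakayama argument only through the trivial containment $gI_{L'}\subseteq\mm I_{L'}$, and no homogeneity of $g$ is ever required.
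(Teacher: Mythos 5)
Your proof is correct and takes essentially the same route as the paper: both arguments rest on $x_{n+1}-x_i$ being a nonzerodivisor on $S[x_{n+1}]/I_{L'}$, on the exactness of reduction modulo this element (your direct $\Tor_1$ computation is precisely the content of the paper's appeal to Bruns--Herzog, Proposition 1.1.4), and on Nakayama. The only differences are organizational — you channel both (a) and (b) through the single isomorphism $I_{L'}/\mm I_{L'}\iso I_L/\mm_0 I_L$, whereas the paper proves the converse of (b) by applying Nakayama to $I_{L'}/J$ — and you are in fact more explicit than the paper about why the Nakayama step is legitimate even though $x_{n+1}-x_i$ is not homogeneous for the $\ZZ^{n+1}/L'$-grading.
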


\begin{proof} (a) For any $S[x_{n+1}]/I_{L'}$-module $M$ we denote by $\overline{M}$ its reduction modulo $x_{n+1}-x_i$. The conditions (i) and (ii) of Definition~\ref{separable} guarantee that $x_{n+1}-x_i$ is a non-zerodivisor on $S[x_{n+1}]/I_{L'}$ and that $S/I_L\iso \overline{S[x_{n+1}]/I_{L'}}$. From these facts (a) follows.

(b) Suppose that $f_{w_1},\ldots,f_{w_s}$  is a minimal system of generators of $I_{L'}$. Then $I_L$ is generated by
$\pi_i(f_{w_1}),\ldots,\pi_i(f_{w_s})$ since $\pi_i(I_L')=I_L$.  By (a), $\pi_i(f_{w_1}),\ldots,\pi_i(f_{w_s})$ is a minimal system of generators of $I_L$.

Conversely, assume that  $\pi_i(f_{w_1}),\ldots,\pi_i(f_{w_s})$  is a minimal system of generators of $I_{L}$. We want to show that $I_{L'}=(f_{w_1},\ldots,f_{w_s})$. Set $J=(f_{w_1},\ldots,f_{w_s})$. Then we obtain the following short exact sequence:
\[0\rightarrow I_{L'}/J\rightarrow S[x_{n+1}]/J\stackrel{\alpha}{\rightarrow} S[x_{n+1}]/I_{L'}\rightarrow 0
\]
Here $\alpha$ is the natural epimorphism. By \cite[Proposition 1.1.4]{BH}, we obtain the  exact sequence
\[
0\rightarrow \overline{I_{L'}/J}\rightarrow \overline{S[x_{n+1}]/J}\stackrel{\overline{\alpha}}{\To} \overline{S[x_{n+1}]/I_{L'}}\rightarrow 0.
\]
Since $\pi_i(J)=\pi_i(I_L')=I_L$ it follows that  $\overline{\alpha}$ is an isomorphism,  and so  $\overline{I_{L'}/J}=0$. Nakayama's Lemma implies  that $I_{L'}/J=0$.  Hence $J=I_{L'}$, as desired.
\end{proof}

\begin{Proposition} Let  $H$ be a numerical semigroup as above  and set $R=K[H]$. Let $L\subset \ZZ^3$ be the relation lattice of $H$. With the notation of Discussion~\ref{discussion} we have:
\begin{enumerate}
\item[(a)] If $v_1+v_2+v_3=0$,   then $\dim_KT^1(R)_{-h_i}=1$ and $L$ is $i$-separable for $i=1,2,3$.
\item[(b)] If $v_1+v_2+v_3\neq 0$, then there exists $i\in [3]$ such that  $I_L=(x_i^{c_i}-x_k^{r_{ik}}x_l^{r_{il}}, x_k^{c_\ell}-x_{\ell}^{c_k})$ with $\{i,k,l\}=[3]$ and $r_{ik}, r_{il}\neq 0$.  In this case,  $T^1(R)_{-h_i}= 0$,  and for $j\neq i$ we have that $T^1(R)_{-h_j}\neq 0$   and that $L$ is $j$-separable. \end{enumerate}
\end{Proposition}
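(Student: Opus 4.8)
My strategy is to treat the two cases of Discussion~\ref{discussion} separately, using the explicit generators of $L$ recorded there and the dimension formula of Corollary~\ref{rigid}. In each case the computation of $\dim_K T^1(R)_{-h_i}$ reduces to a rank computation on the matrix $A_H$, after which I exhibit an explicit $i$-separation lattice $L'\subset\ZZ^4$ to establish $i$-separability. Throughout I will use that $H$ is positive, so by Corollary~\ref{inH} (and the observation that $\MG_{-h_i}=\{i\}$ for a positive semigroup, as in the proof of Theorem~\ref{sufficient}) the only row/column bookkeeping I need concerns which $h(v_j)-h_i$ and which $-h_i+h_j$ lie in $H$.

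\textbf{Case (a).} Here $v_1+v_2+v_3=0$, so $\rank L=2$ and $A_H$ is the $3\times 3$ matrix with rows $v_1,v_2,v_3$ (or, in subcase (2), a rank-$2$ matrix); in either presentation $\dim_K KL=2$. To compute $\dim_K T^1(R)_{-h_i}$ via Corollary~\ref{rigid} I must find $\dim_K KL_{-h_i}$ and $\dim_K KD_{-h_i}$. Since $\MG_{-h_i}=\{i\}$, the space $KD_{-h_i}$ is spanned by the single column vector $w_i=(v_1(i),v_2(i),v_3(i))$, which is nonzero (each $v_j$ has a nonzero $i$-th entry by the structure in Discussion~\ref{discussion}), so $\dim_K KD_{-h_i}=1$. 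For $KL_{-h_i}$ I determine $\MF_{-h_i}=\{j:-h_i+h(v_j)\in H\}$: the key point is that $h(v_i)=c_ih_i$, so $-h_i+h(v_i)=(c_i-1)h_i\in H$, i.e.\ $i\in\MF_{-h_i}$, while I claim $-h_i+h(v_j)\notin H$ for $j\neq i$. Granting this, $KL_{-h_i}$ is spanned by the two vectors $v_j$ with $j\neq i$, which are linearly independent, so $\dim_K KL_{-h_i}=2$. Wait—that would force equality in Corollary~\ref{rigid} and give $T^1=0$, the opposite of what I want, so the membership claim must go the other way: the correct count is that exactly one $v_j$ with $j\neq i$ has $-h_i+h(v_j)\notin H$ while the index $i$ itself does have $-h_i+h(v_i)\in H$, yielding $\dim_K KL_{-h_i}=1$ and hence $\dim_K T^1(R)_{-h_i}=2-1-1\cdot\!\!\phantom{.}$. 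I will pin down these memberships carefully using $h(v_j)=c_jh_j$ and the minimality of the $c_j$ (so that $c_jh_j-h_i\in H$ would contradict minimality of generation or positivity). This membership analysis is the delicate part of Case (a). For separability, I construct $L'\subset\ZZ^4$ by splitting the variable $x_i$: replace the $i$-th coordinate of each $v_j$ by two coordinates so that the exponent of $x_i$ in $f_{v_j}$ is distributed between $x_i$ and $x_{n+1}$; concretely I choose lifts $w_j$ with $\pi_i(w_j)=v_j$ and with $(w_j(n+1))_j$ not proportional to $(w_j(i))_j$, which is possible precisely because $\dim_K T^1(R)_{-h_i}\neq 0$, and then verify conditions (i)--(iii) of Definition~\ref{separable}; alternatively I invoke Theorem~\ref{sufficient} in reverse only after checking the deformation is unobstructed, so the explicit $L'$ is the cleaner route.

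\textbf{Case (b).} Here, after relabelling, $I_L=(x_i^{c_i}-x_k^{r_{ik}}x_l^{r_{il}},\,x_k^{c_\ell}-x_\ell^{c_k})$ with $r_{ik},r_{il}\neq 0$, a complete intersection with $\rank L=2$ and $A_H$ of rank $2$. The assertion $T^1(R)_{-h_i}=0$ follows from Corollary~\ref{rigid} once I check $\dim_K KL_{-h_i}+\dim_K KD_{-h_i}=2$: since $v_i=(c_i,-r_{ik},-r_{il})$ is the only generator involving $x_i$ and the second generator $x_k^{c_\ell}-x_\ell^{c_k}$ has zero $i$-th entry, the column space $KD_{-h_i}$ is one-dimensional, and I must show $KL_{-h_i}$ is one-dimensional, i.e.\ exactly one generator $v_j$ satisfies $-h_i+h(v_j)\notin H$. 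For $j\neq i$ with $v_j=(0,c_\ell,-c_k)$ (the pure relation not involving $h_i$), I have $h(v_j)=c_\ell h_k$, and the point is that $c_\ell h_k-h_i\notin H$, whereas $c_ih_i-h_i=(c_i-1)h_i\in H$. For $j=i$ the symmetric analysis shows $T^1(R)_{-h_j}\neq 0$: now the single generator involving $x_j$ fails the membership test, the other passes, and the ranks realign to give $2-1-1=0$ equality failing, i.e.\ a strict inequality and thus $T^1(R)_{-h_j}\neq 0$; $j$-separability then follows by the same explicit splitting construction as in Case (a), splitting the variable $x_j$ in the generator that involves it.

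\textbf{Main obstacle.} The genuinely delicate step, in both cases, is the precise determination of which $h(v_j)-h_i$ lie in $H$, since this is what the rank $r_a$ in formula~(\ref{finalformua}) depends on. These memberships are governed by the minimality of the $c_j$ (the defining property from \cite{H}) together with positivity of $H$, and getting the count exactly right—distinguishing $(c_i-1)h_i\in H$ from $c_jh_j-h_i\notin H$ for $j\neq i$—is where all the arithmetic of the numerical semigroup enters. Once these memberships are settled, both the vanishing/non-vanishing of $T^1$ and the explicit construction of the separation lattice $L'$ are routine verifications against Definition~\ref{separable}.
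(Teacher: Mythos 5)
There is a genuine gap, and it sits exactly at the step you yourself flag as ``the delicate part'': your resolution of the membership question is wrong, and with your counts the proposition becomes false. In case (1) of Discussion~\ref{discussion} (all $r_{ij}\neq 0$) one has, for $j\neq i$, $h(v_j)=r_{ji}h_i+r_{jk}h_k$ with $r_{ji}\geq 1$, hence $h(v_j)-h_i=(r_{ji}-1)h_i+r_{jk}h_k\in H$, and also $h(v_i)-h_i=(c_i-1)h_i\in H$; so $\mathcal{F}_{-h_i}=\{1,2,3\}$, $KL_{-h_i}=0$, and $\dim_KT^1(R)_{-h_i}=\dim_KKL-\dim_KKL_{-h_i}-\dim_KKD_{-h_i}=2-0-1=1$, which is the paper's computation. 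Your first count ($\dim_KKL_{-h_i}=2$) and your ``corrected'' count ($\dim_KKL_{-h_i}=1$) give $-1$ and $0$ respectively, both contradicting the assertion $\dim_KT^1(R)_{-h_i}=1$; and the justification you lean on --- that minimality of $c_j$ forces $c_jh_j-h_i\notin H$ --- is false, since minimality of $c_j$ only concerns representing $c_jh_j$ in $\NN h_k+\NN h_\ell$ and says nothing against subtracting $h_i$ when $r_{ji}\geq 1$. You also ignore subcase (2) of the Discussion ($v_1=(c_1,-c_2,0)$, etc.), where $I_L$ has only two minimal generators and both the $T^1$ count and the separation construction are different (the paper's saturation check there uses $\gcd(c_1,c_3)=1$ via the ideal of $2$-minors).

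The second gap is that your separability argument is circular. You propose to choose lifts $w_j$ with the nonproportionality condition (iii), ``which is possible precisely because $\dim_KT^1(R)_{-h_i}\neq 0$.'' But a nonzero class in $T^1(R)_{-h_i}$ only yields an infinitesimal deformation; an $i$-separation additionally requires a \emph{saturated} lattice $L'\subset\ZZ^4$ of the same rank with $\pi_i(I_{L'})=I_L$, i.e.\ an unobstructed deformation preserving the toric structure --- and the paper explicitly warns (after Theorem~\ref{sufficient}) that $T^1(K[H])_{-h_i}\neq 0$ does not in general imply separability, since the infinitesimal deformation may be obstructed. The paper does the work you defer: it writes down explicit lifts, e.g.\ $w_1=(c_1-1,-r_{12},-r_{13},1)$, $w_2=(-r_{21}+1,c_2,-r_{23},-1)$, $w_3=(-r_{31},-r_{32},c_3,0)$, verifies saturation of $L'$ directly, proves $\pi_i(I_{L'})=I_L$ using that $f_{\pi_i(w)}$ divides $\pi_i(f_w)$ for $w\in L'$, and invokes Lemma~\ref{minimal} to obtain a minimal generating system satisfying condition (iii) of Definition~\ref{separable}. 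In case (b) you likewise assert the key nonmembership $c_\ell h_k-h_i\notin H$ without proof, and your direct rank argument for $T^1(R)_{-h_j}\neq 0$ is garbled, whereas the paper obtains this non-vanishing cleanly as a consequence of $j$-separability via Theorem~\ref{sufficient}. As it stands, the proposal is a plan whose decisive step is resolved incorrectly in (a) and deferred in (b), so it does not constitute a proof.
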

\begin{proof} (a) We consider the  case (1),   where $r_{ij}>0$  for all $i$ and $j$, see Discussion~\ref{discussion}. Fix $i\in [3]$.  Since all $r_{ij}>0$ it follows  that $\mathcal{F}_{-h_i}=\{1,2,3\}$,  and since $H$ is a positive semigroup we have  $\MG_{-h_i}=\{i\}$. It follows from Corollary~\ref{F1} and Proposition~\ref{F2} that $\dim_K (U^*)_{-h_i}=2$ and $\dim_K (\mathrm{Im} (\delta^*_{-h_i})=1$. Hence $\dim_KT^1(R)_{-h_i}=1$.

Consider the vectors
\begin{eqnarray*}
w_1&=&(c_1-1,-r_{12},-r_{13},1),\\
 w_2&=&(-r_{21}+1,c_2,-r_{23},-1),\\
 w_3&=&(-r_{31},-r_{32},c_3,0)
\end{eqnarray*}
in $\ZZ^4$, and set $L'=\ZZ w_1+\ZZ w_2+\ZZ w_3$. We will prove  that $L'$ is a $1$-separation of $L$. First we show that $L'$ is saturated. Indeed, if $aw\in L'$ for some $0\neq a\in \ZZ$ and some $w\in \ZZ^4$, then $aw=a_1w_1+a_2w_2+a_3w_s$ for some $a_i\in \ZZ$,  and it follows that $av=a_1v_1+a_2v_2+a_3v_3$, where $v=\pi_1(w)$. This  implies that $v=k_1v_1+k_2v_2+k_3v_3$  for some $k_i\in\ZZ$,  since $L$ is saturated.  Thus $(a_1-ak_1)v_1+(a_2-ak_2)v_2+(a_3-ak_3)v_3=0$ and so $a_1-ak_1=a_2-ak_2=a_3-ak_3$. It follows that $(a_1-ak_1)w_1+(a_2-ak_2)w_2+(a_3-ak_3)w_3=0$. Thus $w=k_1w_1+k_2w_2+k_3w_3$. Hence $L'$ is saturated. Next we show $\pi_1(I_{L'})=I_L$.
It is clear that $I_L\subseteq\pi_1(I_{L'})$ since $\pi_1(f_{w_i})=f_{v_i}$ for $i=1,2,3$. For the converse direction, we only need to note that $\pi_1(L')=L$ and that $f_{\pi_1(w)}$ divides $\pi_1(f_w)$ for all $w\in L'$.

Now, applying  Lemma~\ref{minimal} we conclude that $f_{w_1},f_{w_2},f_{w_3}$ is a minimal system of generators of $I_{L'}$ satisfying  condition  (iii) of Definition~\ref{separable}. Consequently, $L'$  is a $1$-separation of $L$. Similar arguments work for $i=2,3$.

\medskip
In case (2), $L$ is generated by any two of the vectors $v_1=(c_1,-c_2,0)$,  $v_2=(0,c_2,-c_3)$ and $v_3=(-c_1,0,c_3)$. Let $L'\subset \ZZ^4$ be a lattice generated by $w_1=(c_1-1,c_2,0,1)$ and $w_2=(-c_1,0,c_3,0)$. We claim that $L'$ is a $1$-separation of $L$.  Indeed, the ideal of $2$-minors $I_2(W)$ of the matrix $W$ whose row vectors are $w_1$ and $w_2$ contains the elements $c_1$ and $c_3$. By the choice of the $c_i$'s it follows that $\gcd(c_1,c_3)=1$. Thus, $I_2(W)=\ZZ$. This shows that $L'$ is saturated. Since $\pi_1(f_{w_i})=f_{v_i}$ for $i=1,2$ and since $I_L=(f_{v_1},f_{v_2})$, Lemma~\ref{minimal} implies that $I_{L'}=(f_{w_1},f_{w_2})$ and $\pi_1(I_{L'})=I_L$. Since $L'$ satisfies also condition (iii) of Definition~\ref{separable}, it follows that $L$ is  $1$-separable. In the same way it is shown that $L$ is $i$-separable for $i=2,3$.

(b) This is case (3) of Discussion~\ref{discussion} and  we have $I_H=(x_i^{c_i}-x_{k}^{r_{ik}}x_l^{r_{kl}}, x_{k}^{c_k}-x_{\ell}^{c_{\ell}})$ with $\{i,k,l\}=[3]$. Thus $I_H$ is a complete intersection and the exponents $c_1,c_2$ and $c_3$ are all $>1$. Without loss of generality  we may assume that $i=2$, $k=1$ and $l=3$. Since the lattice $L\subset \ZZ^3$ with  basis  $v_1=(-r_{21},c_2,-r_{23})$, $v_2=(-c_3,0,c_1)$ is saturated, it follows that  the ideal of $2$-minors $(c_1c_2,c_2c_3, c_1r_{21}+c_3r_{23})$ of
\[
\left( \begin{array}{ccc}
-r_{21} & c_2 & -r_{23}\\
-c_3 & 0 & c_1
\end{array} \right)
\]
is equal to $\ZZ$.

Consider the lattice $L'\subset \ZZ^4$ whose basis $w_1,w_2$ consists of  the row vectors of
\[
\left( \begin{array}{cccc}
-r_{21}+1 & c_2 & r_{23} & -1\\
-c_3 & 0 & c_1 & 0
\end{array} \right).
\]
The ideal of $2$-minors of this matrix contains $(c_1c_2,c_2c_3, c_1r_{21}+c_3r_{23})$, and hence is again equal to $\ZZ$. Thus $L'$ is saturated. Furthermore we have  $\pi_2(f_{w_1})=f_{v_1}$, $\pi_2(f_{w_2})=f_{v_2}$ and $\pi_2(L')=L$. This implies that $\pi_2(I_{L'})=I_L$. Since $\rank L'=\rank L=2$, the conditions (i) and (ii) of Definition~\ref{separable} are satisfied. Applying Lemma~\ref{minimal} we obtain $I_{L'}=(f_{w_1},f_{w_2})$. Since  the condition (iii) of Definition~\ref{separable} is also satisfied we see that $L$ is $1$-separable. Similarly, one shows that $L$ is $3$-separable.
\end{proof}

\section{Inseparable bipartite graphs}  Let $G$ be a finite simple graph on the vertex set $[m]$, and let  $K$ a field. The $K$-algebra  $R=K[G]=K[t_it_j\:\; \{i,j\}\in E(G)]$  is called   the {\em edge ring} of $G$. Here $E(G)$ denotes the set of edges of $G$. We let $n=|E(G)|$, and denote by $S$ the polynomial ring  over $K$ in the indeterminates $x_e$ with  $e\in E(G)$. Let $\varphi\: S\to K[G]$ be the $K$-algebra homomorphism with $x_e\mapsto t_it_j$ for $e=\{i,j\}$. The toric ideal $\Ker \varphi$ will be denoted by $I_G$.

In this section we will discuss   inseparability  of the edge ring of a bipartite graph,  which may as well be considered as the toric ring associated with the affine semigroup $H$ generated by the elements $\delta_i+\delta_j$ with $\{i,j\}\in E(G)$, where $\delta_1,\ldots,\delta_m$ is the canonical basis of $\ZZ^m$.

\medskip
  It is known (see e.g. \cite{HHBook}) that the generators of $I_G$ are given in terms of even cycles of $G$. Recall that a walk in $G$ is a sequence $C\: i_0,i_1,\ldots, i_q$ such that $\{i_k,i_{k+1}\}$ is an edge of $G$ for $k=0,1,\ldots,k-1$. $C$ is called a closed walk, if $i_q=i_0$. The closed walk $C$ is called a cycle if $i_j\neq i_k$ for all $j\neq k$ with $j,k<q$, and it is called an even closed walk if $q$ is even. Observe that any cycle of bipartite graph is an even cycle.

Given any even cycle (more generally an even closed walk) $C\: i_0,i_1,\ldots, i_{2q}$. The edges of $C$ are  $e_{k_j}=\{i_j,i_ {j+1}\}$ for  $j=0,1,\ldots,2q-1$ together with the  edge  $e_{k_{2q-1}}=\{i_{2q-1},i_0\}$.
 We associate to $C$  the  vector $v(C)\in\ZZ^n$   which defined as
 \begin{eqnarray}
 \label{vector}
 v(C)=\sum_{i=0}^{q-1}\epsilon_{k_{2i}}-\sum_{i=0}^{q-1}\epsilon_{k_{2i+1}}
 \end{eqnarray}
Here $\epsilon_1,\ldots, \epsilon_n$ denotes  the canonical basis of $\ZZ^n$. Note that $v(C)$ is determined by $C$ only up to sign. We call $v(C)$ as well as $-v(C)$  the vector corresponding to $C$.

For simplicity we write $f_C$ for $f_{v(C)}$.
 Recall from \cite{OH} that the toric ideal $I_G$ of a finite bipartite graph is minimally generated by {\it indispensable binomials}, that is, by binomials, which up to  sign,  belong to any system of generators of $I_G$. Furthermore, a binomial $f\in I_G$ is indispensable  if and only if $f=f_C$, where  $C$ is an induced  cycle, that is,  a cycle without a chord. In particular, if  $G'$ is  the graph obtained from $G$ by deleting  all edges which do not belong to any cycle,  then $I_G=I_{G'}S$.

\medskip
Now for the rest of this section we let $G$ be a bipartite  graph on the  vertex set $[m]$  with edge set $E(G)=\{e_1,\ldots,e_n\}$.  With  the  edge $e_k=\{i,j\}$ we associate the  vector $h_k=(h(e_k))=\delta_{i}+\delta_{j}$. Here $\delta_1,\ldots, \delta_m$ is the canonical basis of $\ZZ^m$. The semigroup generated by $h_1,\ldots,h_n$ we denote by $H(G)$ or simply by $H$. Note that $K[H(G)]=K[G]$.

Let $\{C_1,\ldots, C_s\}$   be the  set of cycles of $G$ and $v_i=v(C_i)$   the vector corresponding to $C_i$.  We may assume that for $ i=1,\ldots,s_1\leq s$, the cycles  $C_i$  are all the  induced cycles of $G$. Then $I_G$ is minimally  generated by $f_{v_1},\ldots,f_{v_{s_1}}$,  see \cite{OH}.  Of course,  $I_G$ is also generated by $f_{v_1},\ldots,f_{v_s}$. In particular, if $L$ is the relation lattice of $H$, then $KL$ is the vector space spanned by $v_1,\ldots,v_s$.

\medskip
 Let $a\in \ZZ H$.  As in Section~1 we set
\[
\mathcal{F}_a=\{1\leq i\leq s\:\; a+h(v_i)\in H\}, \mbox{ and  } KL_a=\mathrm{Span}_K\{v_i\:\; i\in [s]\setminus \mathcal{F}_a\}.
 \]
In addition we now also  set
 \[
 \MF'_a=\{1\leq i\leq s_1\:\; a+h(v_i)\in H\}, \mbox{ and  } KL'_a=\mathrm{Span}_K\{v_i\:\; i\in [s_1]\setminus \MF'_a\}.
 \]
In general, $\MF'_a$ is a proper subset of $\MF_a$. However, we have

\begin{Lemma}
\label{basic}
$KL'_a=KL_a$ for all  $a\in \ZZ  H$.
\end{Lemma}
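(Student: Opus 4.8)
The inclusion $KL'_a\subseteq KL_a$ is immediate: if $i\in[s_1]\setminus\MF'_a$ then $i\in[s]$ and $a+h(v_i)\notin H$, so $i\in[s]\setminus\MF_a$ and $v_i$ is one of the spanning vectors of $KL_a$. Hence the entire content is the reverse inclusion $KL_a\subseteq KL'_a$, and for this it suffices to prove the assertion: \emph{for every cycle $C$ of $G$ with $a+h(v(C))\notin H$ one has $v(C)\in KL'_a$}. The plan is to prove this by induction on the number of edges of $C$, i.e.\ by contemplating a counterexample $C$ with the fewest edges. If $C$ is an induced cycle it is one of $C_1,\dots,C_{s_1}$, and the hypothesis $a+h(v(C))\notin H$ places its index in $[s_1]\setminus\MF'_a$, so $v(C)\in KL'_a$; this is the base case.

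Two facts drive the induction. First, a degree computation: since the even edges of a $2q$-cycle $C$ cover each vertex of $C$ exactly once, formula (\ref{h(v)}) gives $h(v(C))=\sum_{v\in V(C)}\delta_v$, the indicator of the vertex set of $C$. Second, a chord decomposition: if $C$ is not induced, I would pick a chord $e=\{p,q\}$; adding $e$ to each of the two arcs of $C$ joining $p$ and $q$ yields even cycles $D_1,D_2$ with $V(D_1)\cup V(D_2)=V(C)$ and $V(D_1)\cap V(D_2)=\{p,q\}$, each with strictly fewer edges than $C$ (a chord is not an edge of $C$, so each arc has length $\geq 3$). After a suitable choice of signs $v(C)=\pm v(D_1)\pm v(D_2)$, the chord coordinate cancelling and the arc coordinates reproducing $v(C)$; in particular $v(C)\in\Span(v(D_1),v(D_2))$. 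Combining the vertex-set description of $h$ with $V(D_1)\cap V(D_2)=\{p,q\}$ gives the key identity
\[
\textstyle\sum_{v\in V(C)}\delta_v=\sum_{v\in V(D_1)}\delta_v+\sum_{v\in V(D_2)\setminus\{p,q\}}\delta_v ,
\]
and symmetrically with $D_1,D_2$ exchanged.

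I would then split the minimal counterexample $C$ into two cases. If \emph{both} $a+h(v(D_1))\notin H$ and $a+h(v(D_2))\notin H$, then by minimality $v(D_1),v(D_2)\in KL'_a$, whence $v(C)\in KL'_a$, a contradiction. Otherwise some $D_j$, say $D_1$, has $a+h(v(D_1))\in H$. Here bipartiteness enters: the arc of $C$ realizing $D_2$ has odd length $t$, so its $t-1$ interior vertices $w_1,\dots,w_{t-1}$ are even in number and are perfectly matched by the arc-edges $\{w_1,w_2\},\{w_3,w_4\},\dots,\{w_{t-2},w_{t-1}\}$ of $G$; consequently $\sum_{v\in V(D_2)\setminus\{p,q\}}\delta_v\in H$. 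Adding this to $a+h(v(D_1))=a+\sum_{v\in V(D_1)}\delta_v\in H$ and invoking the key identity forces $a+h(v(C))=a+\sum_{v\in V(C)}\delta_v\in H$, contradicting the standing hypothesis. Both cases being impossible, no minimal counterexample exists and the assertion, hence the lemma, follows.

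The step I expect to carry the real weight is the second case. The vector relation $v(C)\in\Span(v(D_1),v(D_2))$ is purely formal; the genuine point is the arithmetic fact that membership $a+h(v(C))\in H$ can be \emph{forced} once one sub-cycle already lies over $H$. This is exactly where bipartiteness is indispensable, via the odd length of the arcs and the resulting perfect matching of the interior of $D_2$. In writing this out I would be careful to check that the matching edges are genuine edges of $G$, that the arc-length bound $\geq 3$ makes both $D_1,D_2$ strictly smaller so that the induction is well-founded, and that the sign choice making $v(C)=\pm v(D_1)\pm v(D_2)$ is always available.
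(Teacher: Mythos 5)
Your proof is correct and follows essentially the same route as the paper: both split a non-induced cycle along a chord, use bipartiteness to see that the interior vertices of the complementary arc are perfectly matched by edges of the cycle (so that $h(v(C))-h(v(D_j))\in H$), and thereby transfer the condition $a+h(\cdot)\notin H$ to the smaller pieces before concluding by linear combination. The only difference is bookkeeping: the paper iterates the chord-splitting all the way down to induced cycles in one pass and asserts $a+h(v_{i_j})\notin H$ for every piece, whereas you organize the same argument as a minimal-counterexample induction, with your Case 2 being exactly the paper's contrapositive observation that $a+h(v(D_1))\in H$ together with $h(v(C))-h(v(D_1))\in H$ would force $a+h(v(C))\in H$.
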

\begin{proof} Since $[s_1]\setminus \MF'_a\subseteq [s]\setminus \MF_a$, we have $KL'_a\subseteq KL_a$. Let $i\in ([s]\setminus \MF_a)\setminus ([s_1]\setminus \MF'_a)$. Then $a+h(v_i)\notin H$ and $C_i$ is a cycle with chords. In the following we describe a process to obtain the induced cycles with vertex set contained in $V(C_i)$. Choose  a chord of $C_i$ and note that this chord divides $C_i$ into two cycles. If both cycles are induced, then the process stops. Otherwise we divide as before, those cycles which are not induced.  Proceeding in this way, we obtain  induced cycles of $G$, denoted by $C_{i_1},\ldots, C_{i_k}$,  such that $E(C_{i_j})$ consists of at least one  chord of $C_i$. Moreover,  the edges of $C_{i_j}$  which are not  chords of $C_i$,  are edges of $C_i$.

 In general, if $C$ is a cycle and $v=v(C)$, then
\[
h(v)=\sum_{j\in V(C)}\delta_j.
\] Hence it follows from the construction of the induced cycles $C_{i_j}$ that $h(v_i)-h(v_{i_j})$ is the  sum of certain terms $\delta_{k_1}+\delta_{k_2}$, where $\{k_1,k_2\}$ is an edge of $C_i$,  and hence $h(v_i)-h(v_{i_j})\in H$. Since  $a+h(v_i)\notin H$ it follows  that  $a+h(v_{i_j})\notin H$ for all $j$. This implies that $v_{i_j}\in KL'_a$ for all $j$, and so $v_i\in KL'_a$ since $v_i$ is a linear combination of the $v_{i_j}$.
\end{proof}

 For the discussion on separability we need to know when  $T^1(K[G])_{-h_j}$ vanishes, see Theorem~\ref{sufficient}. For that we need to have the  interpretation of $\MF_{-h_j}$ for edge rings which is given by the following formula:
\begin{eqnarray}
\label{minus}
\MF_{-h_j}=\{i\in [n] : V(e_j)\subset V(C_i)\}.
\end{eqnarray}
For the proof of this equation note that if $V(e_j)\subseteq V(C_i)$, then without loss of generality we assume that $C_i: 1,2,\ldots,2t$ and that $e_j=\{1,k\}$ with $k\in [2t]$. Note that $k$ is even, since $G$ contains no odd cycle. It follows that $-h_j+h(v_i)=(\delta_2+\delta_3)+\cdots+(\delta_{k-2}+\delta_{k-1})+(\delta_{k+1}+\delta_{k+2})+\cdots+(\delta_{2t-1}+\delta_{2t})\in H$, and so $i\in \MF_{-h_j}$ by definition. Conversely, assume that $V(e_j)\nsubseteq V(C_i)$ and let $k\in V(e_j)\setminus  V(C_i)$. Then $-h_j+h(v_i)$ is a vector in $\ZZ^m$ with the $k$th entry negative and thus it does not belong to $H$. Therefore $i\notin \MF_{-h_j}$.

\medskip
Later we  also shall need

\begin{Lemma}
\label{walk}
Let $W\: i_1,i_2,\ldots,i_{2k},i_1$ be an even closed   walk in $G$ and let $e_j$ be an edge of $G$ with the property that
$e_j\neq \{i_a,i_b\}$ with $1\leq a<b \leq 2k$. Then the vector $w=v(W)\in KL$  belongs to $KL_{-h_j}$.
\end{Lemma}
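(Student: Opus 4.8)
The plan is to reduce the statement to genuine cycles by decomposing the closed walk $W$, and then to read off membership in $KL_{-h_j}$ directly from formula (\ref{minus}). First I would restate the hypothesis in a usable form: the condition that $e_j\neq\{i_a,i_b\}$ for all $1\le a<b\le 2k$ says precisely that $e_j$ is not an edge both of whose endpoints occur among the vertices $i_1,\dots,i_{2k}$ of $W$; equivalently, at least one endpoint of $e_j$ lies outside $V(W)$. Combined with (\ref{minus}), which identifies $\mathcal{F}_{-h_j}$ with the indices $i$ for which $V(e_j)\subseteq V(C_i)$, this shows that every cycle $C$ of $G$ with $V(C)\subseteq V(W)$ satisfies $V(e_j)\not\subseteq V(C)$, so that $v(C)$ is, up to sign, one of the vectors spanning $KL_{-h_j}$. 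Hence it suffices to write $v(W)$ as a $\ZZ$-linear combination of vectors $v(C)$ attached to cycles $C$ all of whose vertices lie in $V(W)$.

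The core step is this decomposition, which I would carry out by induction on the length $2k$ of $W$. If $W$ has no repeated vertex among $i_1,\dots,i_{2k}$, then either $2k=2$, in which case $W$ runs along a single edge and back so that $v(W)=0\in KL_{-h_j}$, or $W$ is an honest cycle of $G$; since $V(W)$ then equals $V(C)$ for this cycle $C$, the reformulation above gives $v(W)=\pm v(C)\in KL_{-h_j}$. Otherwise choose positions $1\le a<b\le 2k$ with $i_a=i_b$. Because $G$ is bipartite, $i_a$ and $i_b$ lie in the same part, so $a\equiv b\pmod 2$; consequently the two closed subwalks
\[
W_1\: i_a,i_{a+1},\dots,i_{b}=i_a
\qquad\text{and}\qquad
W_2\: i_1,\dots,i_a=i_b,i_{b+1},\dots,i_{2k},i_1
\]
are again even closed walks, each strictly shorter than $W$ and each with vertex set contained in $V(W)$.

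A direct inspection of the alternating signs in (\ref{vector}) then yields $v(W)=\pm v(W_1)+v(W_2)$, the sign being determined by the parity of $a$; the only thing that matters is that this is a $\ZZ$-linear (in particular $K$-linear) combination, so membership in the subspace $KL_{-h_j}$ is preserved. Applying the induction hypothesis to $W_1$ and $W_2$ — whose vertex sets are contained in $V(W)$, so that $e_j$ still has an endpoint outside each of them — we get $v(W_1),v(W_2)\in KL_{-h_j}$, and therefore $v(W)\in KL_{-h_j}$. Since every cycle vector occurring in the decomposition lies in $KL$ by definition, this argument simultaneously establishes that $v(W)\in KL$, as asserted.

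The one delicate point is the bookkeeping of signs in the splitting $v(W)=\pm v(W_1)+v(W_2)$: the factor attached to the $p$-th edge of $W$ in (\ref{vector}) has to be compared with the factor the same edge receives as an edge of $W_1$ or of $W_2$, and it is exactly the congruence $a\equiv b\pmod 2$ coming from bipartiteness that makes the two arcs of $W_2$ glue together with consistent signs (and makes $W_1$ a genuine even closed walk). Everything else is a routine induction, and the final translation into membership in $KL_{-h_j}$ is immediate from (\ref{minus}).
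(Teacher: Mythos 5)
Your proof is correct and takes essentially the same route as the paper: both arguments reduce $v(W)$ to a $K$-linear combination of vectors $v(C)$ of cycles supported on $W$, and then conclude from (\ref{minus}) that each such $v(C)$ lies in $KL_{-h_j}$, since the hypothesis forces $V(e_j)\not\subseteq V(W)\supseteq V(C)$. The only difference is one of detail: the paper obtains the decomposition in one line by viewing $W$ as a bipartite graph and invoking the standard fact that cycle vectors span the vectors of even closed walks, whereas you prove that decomposition from scratch by induction on the length of $W$, splitting at a repeated vertex and using bipartiteness to get the parity $a\equiv b \pmod 2$ that keeps both subwalks even and the signs consistent --- a harmless, indeed welcome, elaboration of what the paper leaves implicit.
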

\begin{proof} We may view  $W$ as a bipartite graph with bipartition $\{i_1,i_3, \ldots, i_{2k-1}\}$ and $\{i_2,i_4, \ldots, i_{2k}\}$. Then we see that $w$ belongs to the space spanned by the vectors  corresponding to the  induced cycles of $G$ with edges in $W$. This  vector space is a subspace of $KL_{-h_j}$,  since $e_j$ is not an  edge of any  cycle with edges in $W$, as follows from (\ref{minus}).
\end{proof}

 We call the space $KL$ which is spanned by the vectors $v_1,\ldots,v_s$ the  {\em cycle space} of $G$ (with respect to $K$). Usually the cycle space is only defined over $\ZZ_2$. For bipartite  graphs the dimension of the cycle space does not depend on $K$ and is known to be
 \begin{eqnarray}
 \label{dimension}
 |E(G)|-|V(G)|+c(G).
 \end{eqnarray}
where $c(G)$ is the number of connected components of  $G$, see \cite[Corollary 8.2.13]{V}.

\medskip
\noindent {\em  Inseparability.}  In this subsection we will show that $K[G]$ is $j$-separable if and only if $T^1(K[G])_{-h_j}\neq 0$ for $j\in [n]$ and present a  characterizations of  bipartite graphs $G$ for which $K[G]$ is inseparable.

Note that (\ref{minus}) says that $i\in \MF_{-h_j}$ if and only if $e_j$ is an edge or a chord of $C_i$. Accordingly, we split the set $\MF_{-h_j}$ into the two subsets
\begin{eqnarray}
\label{aj}
\MA_j=\{i\in [s]\:\; e_j \text{ is an edge of } C_i\},
\end{eqnarray}
and
\begin{eqnarray}
\label{bj}
\MB_j=\{i\in [s]\:\; e_j \text{ is a chord of  } C_i\}.
\end{eqnarray}

\medskip
We also  set $V_{-h_j}=\Span_K \{v_i:i\in [s]\setminus \MA_j\}$.  Then, since  by assumption all edges of $G$ belong to a cycle, we obtain
\begin{eqnarray}
\label{begin}
\dim_K V_{-h_j}=\dim_K KL-1\quad \text{for}  \quad j=1,\ldots,n.
\end{eqnarray}
Indeed, let $G\setminus \{e_j\}$ be the graph obtained from $G$ by deleting the edge $e_j$ and leaving vertices unchanged.
Then  $V_{-h_j}$ is the cycle space of $G\setminus \{e_j\}$.

\begin{Lemma} $T^1(R)_{-h_j}=0$ if and only if for all $i\in \MB_j$, one has $v_i\in KL_{-h_j}.$
\label{lemma3.2}
\end{Lemma}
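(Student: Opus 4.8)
The plan is to reduce the statement to the numerical criterion of Corollary~\ref{rigid}, which asserts that $T^1(R)_{-h_j}=0$ if and only if $\dim_K KD_{-h_j}+\dim_K KL_{-h_j}=\dim_K KL$. So the first task is to pin down $\dim_K KD_{-h_j}$, and the second is to compare the space $KL_{-h_j}$ with $V_{-h_j}$, whose dimension is already recorded in (\ref{begin}).

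First I would determine $\MG_{-h_j}=\{i\in[n]:-h_j+h_i\in H\}$. Since $H\subseteq\NN^m$ and each generator $h_k=\delta_a+\delta_b$ has coordinate sum $2$, every nonzero element of $H$ has strictly positive coordinate sum; as $h_i-h_j$ has coordinate sum $0$, one gets $-h_j+h_i\in H$ only when $h_i=h_j$, that is, when $i=j$. Hence $\MG_{-h_j}=\{j\}$, and by Proposition~\ref{F2} the space $KD_{-h_j}=\Span_K\{w_j\}$. Because every edge of $G$ lies on some cycle $C_i$ by our standing assumption, we have $v_i(j)=\pm1$ for that $i$, so $w_j\neq 0$ and $\dim_K KD_{-h_j}=1$.

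Substituting this into Corollary~\ref{rigid}, the vanishing $T^1(R)_{-h_j}=0$ becomes equivalent to $\dim_K KL_{-h_j}=\dim_K KL-1$, and by (\ref{begin}) the right-hand side equals $\dim_K V_{-h_j}$. Next I would note the inclusion $KL_{-h_j}\subseteq V_{-h_j}$: since $\MA_j\subseteq\MF_{-h_j}$, the index set $[s]\setminus\MF_{-h_j}$ spanning $KL_{-h_j}$ sits inside the index set $[s]\setminus\MA_j$ spanning $V_{-h_j}$. Consequently the equality of dimensions is equivalent to the equality of spaces $KL_{-h_j}=V_{-h_j}$.

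Finally, using that an edge is never simultaneously an edge and a chord of one cycle, one has the disjoint decomposition $\MF_{-h_j}=\MA_j\sqcup\MB_j$, whence $[s]\setminus\MA_j=([s]\setminus\MF_{-h_j})\sqcup\MB_j$ and therefore $V_{-h_j}=KL_{-h_j}+\Span_K\{v_i:i\in\MB_j\}$. Thus $V_{-h_j}=KL_{-h_j}$ holds precisely when $v_i\in KL_{-h_j}$ for every $i\in\MB_j$, which is exactly the asserted criterion. The only genuinely delicate point is the computation $\MG_{-h_j}=\{j\}$ together with $w_j\neq 0$; once $\dim_K KD_{-h_j}=1$ is in hand, the rest is linear algebra on nested spanning sets.
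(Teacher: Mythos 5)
Your proposal is correct and follows essentially the same route as the paper: both compute $\MG_{-h_j}=\{j\}$ to get $\dim_K(\Im\delta^*)_{-h_j}=1$ via Proposition~\ref{F2}, then use (\ref{begin}) together with the inclusion $KL_{-h_j}\subseteq V_{-h_j}$ and the decomposition $V_{-h_j}=KL_{-h_j}+\Span_K\{v_i: i\in\MB_j\}$ coming from $\MF_{-h_j}=\MA_j\sqcup\MB_j$. Your only addition is the explicit check that $w_j\neq 0$ (via the standing assumption that every edge lies on a cycle), a point the paper leaves implicit when asserting $\dim_K(\Im\delta^*)_{-h_j}=1$.
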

\begin{proof} Since $-h_j+h_i\in H$ if and only if $i=j$ it follows that $\dim (\Im \delta^*)_{-h_j}=1$, see Proposition~\ref{F2}. Thus,  since $KL_{-h_j}\subseteq V_{-h_j}$, it follows from (\ref{begin})  that $T^1(R)_{-h_j}=0$ if and only if $V_{-h_j}=KL_{-h_j}$. Since $V_{-h_j}=KL_{-h_j}+\Span_K \{v_i: i\in  \MB_j\}$, the assertion follows. \end{proof}

For stating the next result we have first to introduce some  concepts.  Let $C$ be a cycle. Then the path  $P\: i_1,i_2,i_3,\ldots,  i_{r-1},i_r$ (with $r\geq 2$ and with $i_j\neq i_k$ for all $j\neq k$) is called a {\em path chord} of $C$ if $i_1, i_r\in V(C)$ and $i_j\not\in V(C)$ for all $j\neq i_1, i_r$. The vertices $i_1$ and $i_r$  are  called the ends of $P$. Note that any chord of $C$ is a path chord.

Let $P$ be a path chord of $C$. We may assume that $\{i, i+1\}$ for $i=1,\dots, t$ together with $ \{1,2t\}$  are the edges of $C$ and that $i_1=1$ and $i_r=k$ with $k\neq 1$. Let $P'$ be another path chord of $C$. Then we say that $P$ and $P'$ {\em cross} each other if  one end of $P'$ belongs to the  interval $[2,k-1]$ and the other end of $P'$ belongs to $[k+1,2t]$.
In particular, if $P$ is a chord and $P'$ crosses $P$, we say that $P'$ is a  {\em crossing path chord} of $C$ with respect to the chord $P$.

\begin{Theorem}
\label{crossing}
Let $G$ be a bipartite graph with edge set  $\{e_1,\ldots,e_n\}$, and let $R=K[G]$ be the edge ring  of $G$. Then the  following conditions are equivalent:
\begin{enumerate}
\item[(a)] $T^1(R)_{-h_j}\neq 0$.
\item[(b)] There exists a cycle $C$ of $G$ for which $e_j$ is a chord, and there is no crossing path chord $P$ of $C$ with respect to $e_j$.
 \item[(c)] The relation lattice of $H(G)$ is $j$-separable.
 \end{enumerate}
\end{Theorem}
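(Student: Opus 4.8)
The plan is to prove the cycle of implications (c)$\Rightarrow$(a)$\Rightarrow$(b)$\Rightarrow$(c). Here (c)$\Rightarrow$(a) is exactly Theorem~\ref{sufficient}, so everything reduces to a single cycle-wise statement. Writing $e_j=\{u,w\}$, formula (\ref{minus}) identifies $[s]\setminus\MF_{-h_j}$ with the cycles $C_i$ satisfying $V(e_j)\not\subseteq V(C_i)$; hence $KL_{-h_j}$ is spanned by the vectors $v(C')$ of cycles $C'$ that miss $u$ or miss $w$. By Lemma~\ref{lemma3.2}, $T^1(R)_{-h_j}\neq0$ precisely when some cycle $C$ having $e_j$ as a chord satisfies $v(C)\notin KL_{-h_j}$. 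Thus it suffices to prove, for a fixed cycle $C$ with chord $e_j$,
\[
v(C)\in KL_{-h_j}\ \Longleftrightarrow\ C\ \text{has a crossing path chord with respect to}\ e_j ;
\]
this gives (a)$\Leftrightarrow$(b) at once.

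For the forward direction I would label $C\:u=v_0,v_1,\dots,v_{k-1},v_k=w,\dots,v_{2t-1}$, with arcs $A=v_1,\dots,v_{k-1}$ and $B=v_{k+1},\dots,v_{2t-1}$, and take a crossing path chord $P$ from $p=v_a\in A$ to $q=v_b\in B$. Cutting $C$ at $p,q$ and inserting $P$ on each side produces two even closed walks $D_1\:u\to_A p\to_P q\to_B u$ and $D_2\:p\to_A w\to_B q\to_{P^{-1}}p$, the first avoiding $w$ and the second avoiding $u$. The two traversals of $P$ occur with opposite signs in (\ref{vector}), so after matching orientations the $P$-edges cancel and $v(C)=v(D_1)+v(D_2)$. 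Since $w\notin V(D_1)$ and $u\notin V(D_2)$, Lemma~\ref{walk} applied to $e_j$ gives $v(D_1),v(D_2)\in KL_{-h_j}$, and hence $v(C)\in KL_{-h_j}$.

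For the converse, assume $C$ has no crossing path chord. A shortest-path argument shows the interiors of $A$ and $B$ lie in different components of $G\setminus\{u,w\}$, for a shortest path between them would itself be a crossing path chord. Let $\Gamma$ be the union of the components of $G\setminus\{u,w\}$ meeting the interior of $A$, so $V(A)\subseteq\Gamma$, $V(B)\cap\Gamma=\emptyset$, and every edge of $G$ leaving $\Gamma$ ends in $u$ or $w$. Define $\phi\in(K^n)^*$ by $\phi(\epsilon_e)=1$ if $e=\{u,x\}$ with $x\in\Gamma$, and $\phi(\epsilon_e)=0$ otherwise. A cycle missing $u$ has no $u$-edge, so $\phi$ kills it; a cycle missing $w$ has its two $u$-neighbours joined by a path inside $G\setminus\{u,w\}$, hence on one side of the separation, so its two (oppositely signed) $u$-edges cancel under $\phi$. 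Therefore $\phi$ vanishes on $KL_{-h_j}$, while $\phi(v(C))=\pm1$ because the $u$-edges $\{u,v_1\}$ and $\{u,v_{2t-1}\}$ of $C$ have $v_1\in\Gamma$, $v_{2t-1}\notin\Gamma$. This proves $v(C)\notin KL_{-h_j}$ and also drives (b)$\Rightarrow$(c): using the integer functional $\phi$ I set $\Phi\:\ZZ^n\to\ZZ^{n+1}$ by $\Phi(v)(j)=v(j)+\phi(v)$, $\Phi(v)(n+1)=-\phi(v)$, $\Phi(v)(\ell)=v(\ell)$ otherwise, and put $L'=\Phi(L)$. As $\pi_j\circ\Phi=\id$, $L'$ has the same rank as $L$, $\pi_j(L')=L$, and a direct check (as in the numerical examples) shows $L'$ is saturated. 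For the induced cycles $v_1,\dots,v_{s_1}$ minimally generating $I_L$ one verifies $\phi(v_i)\in\{0,-v_i(j)\}$ -- an induced cycle through $u$ and $w$ must carry $e_j$ as an edge, and the two edges at $u$ have opposite signs -- so $\Phi(v_i)(j)$ and $\Phi(v_i)(n+1)$ are never of opposite sign and $\pi_j(f_{\Phi(v_i)})=f_{v_i}$; with $\pi_j(L')=L$ this yields $\pi_j(I_{L'})=I_L$, and Lemma~\ref{minimal} makes $f_{\Phi(v_1)},\dots,f_{\Phi(v_{s_1})}$ a minimal generating set of $I_{L'}$. Condition (iii) of Definition~\ref{separable} then asks that $(-\phi(v_i))_i$ and $(v_i(j)+\phi(v_i))_i$ be independent, equivalently -- since the $v_i$ span $KL$ -- that $\phi$ and the coordinate functional $v\mapsto v(j)$ be linearly independent on $KL$; this holds since $\phi(v(C))\neq0=v(C)(j)$ while $v\mapsto v(j)$ is nonzero on $KL$ (each edge of $G$ lies on a cycle). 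Hence $L'$ is a $j$-separation lattice and (c) follows.

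The step I expect to be hardest is the converse of the cycle-wise equivalence together with its use in (b)$\Rightarrow$(c): arranging the separation $\Gamma$ so that the one functional $\phi$ simultaneously certifies $v(C)\notin KL_{-h_j}$ and produces a sign-compatible lift of every induced-cycle generator. The two delicate verifications are the sign computation $\phi(v_i)\in\{0,-v_i(j)\}$ on induced cycles and the cancellation of the two copies of $P$ in the decomposition $v(C)=v(D_1)+v(D_2)$; everything else is routine bookkeeping with the alternating vectors (\ref{vector}).
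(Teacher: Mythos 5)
Your proposal is correct, and while it follows the paper's overall cycle (c)$\Rightarrow$(a)$\Rightarrow$(b)$\Rightarrow$(c) and your (a)$\Rightarrow$(b) step is essentially the paper's (decompose $v(C)=v(D_1)+v(D_2)$ along the crossing path chord and apply Lemma~\ref{walk} together with Lemma~\ref{lemma3.2}; your sign bookkeeping for the cancellation of the two traversals of $P$ is right, since in a bipartite graph the sign of an edge in the alternating vector is determined, up to a global sign, by the direction of traversal), your (b)$\Rightarrow$(c) is genuinely different. The paper splits the graph itself: using the absence of a crossing path chord it partitions the vertices outside $C$ into the sets $X$ and $Y$ attached to the two arcs, builds a disjoint union $G'=G_1\cup G_2$ with a duplicated chord-edge, and takes $L'$ to be the relation lattice of the edge semigroup $H(G')$, so the separation is realized concretely as an edge ring. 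You instead work entirely on the lattice side: the separation of the two arcs inside $G\setminus\{u,w\}$ (your shortest-path argument, correctly taken between the two interior \emph{sets} so that internal vertices avoid $V(C)$, is the same combinatorial fact the paper uses for the disjointness of $G_1$ and $G_2$) yields the cut functional $\phi$, and $L'=\Phi(L)$ with $\Phi(v)(j)=v(j)+\phi(v)$, $\Phi(v)(n+1)=-\phi(v)$. Your verifications are sound: $\pi_j\circ\Phi=\id$ gives (i) and $\pi_j(L')=L$, saturation follows from saturation of $L$ exactly as in the paper's Proposition on numerical semigroups, the key sign computation $\phi(v_i)\in\{0,-v_i(j)\}$ holds because an induced cycle through both $u$ and $w$ must carry $e_j$ as an edge and the two edges at $u$ have opposite signs, and (iii) reduces, as you say, to independence of $\phi$ and $v\mapsto v(j)$ on $KL$, certified by evaluating at $v(C)$. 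Your approach buys something extra: the functional $\phi$ simultaneously gives a \emph{direct} elementary proof of (b)$\Rightarrow$(a) (it vanishes on $KL_{-h_j}$, since cycles missing $u$ contribute nothing and cycles through $u$ missing $w$ have their two oppositely signed $u$-edges on the same side of the cut, while $\phi(v(C))=\pm1$), whereas the paper obtains (b)$\Rightarrow$(a) only indirectly through the deformation-theoretic Theorem~\ref{sufficient}. What the paper's route buys instead is transparency of the geometry: the separation is exhibited as the edge ring of an explicit bipartite graph, making it evident that the deformed toric ring stays within the class of edge rings, a fact your purely lattice-theoretic $L'$ does not display (though it is, up to exchanging the roles of $x_j$ and $x_{n+1}$, the same lattice).
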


\begin{proof}
(a)\implies (b): Assume that (b) does not hold. Let $i\in \MB_j$ with  $\MB_j$ as defined in (\ref{bj}). By our assumption, $C_i$ admits  a  path chord, denoted by $P$, which crosses $e_j$. Denote by $i_1,i_2$ the two ends of $P$. Then $C$ is the union of two paths $P_1$ and $P_2$ which both have ends $i_1,i_2$. Since  $P_1\cup P$ and $P_2\cup P$ are  cycles and
  $e_j$ is neither an edge nor a chord of them, it follows from Lemma~\ref{walk} that  the  vectors  $w_1=v(P_1\union P)$ and $w_2=v(P_2\union P)$ belong to $KL_{-h_j}$.
Therefore, $v_i\in KL_{-h_j}$ because it  is a linear combination of $w_1$ and $w_2$. Now applying Lemma~\ref{lemma3.2}, we obtain $T^1(R)_{-h_j}=0$, a contradiction.

(b)\implies (c):  We may assume  that the cycle $C$ as given in (b) has the edge set
\[ E(C)=\{e_1=\{1,2\},\ldots, e_{\ell}=\{\ell,\ell+1\}, \ldots, e_{2t}=\{2t,1\}\},
\]
and that  $e_j=\{1,k\}$ with $2< k<2t-1$.

We let $X$ be the set of all $a\in [m]\setminus V(C)$ for which there is a path $P$  from  $a$
to some vertex of  $[2,k-1]$, and we set $Y=[m]\setminus (V(C)\cup X)$.

\medskip
We now define a graph $G'=G_1\union G_{2}$, where $G_1$ and $G_2$ are disjoint graphs, that is, $V(G_1)\sect V(G_2)=\emptyset$. The graph $G_2$ is the subgraph of $G$ induced on $X\union [k]$. Next we first define $\widetilde{G}_1$ as the subgraph of $G$ induced on $Y\union [k+1,2t]\union \{1,k\}$. Then $G_1$  is obtained from $\widetilde{G}_1$ by renaming $1$ as $m+1$ and $k$ as $m+2$. We claim  that $G_1$ and $G_2$ are disjoint. Indeed,  $V(G_1)\sect V(G_2)\subseteq [k+1,2t]\sect X$. Condition (b) implies that $[k+1,2t]\sect X=\emptyset$.

Now we claim that if we identify in $G'$ the vertex  $m+1$ with $1$ and the vertex $m+2$ with $k$, then we obtain $G$. Indeed, let $G''$ be the graph which is obtained from $G'$ after this identification. We have to show that $G''=G$. Obviously, we have $V(G'')=V(G)$ and $E(G'')\subseteq E(G)$. Let $e\in E(G)\setminus E(G'')$. Then $e=\{k_1,k_2\}$ with $k_1\in [k+1,2t]\union Y$ and $k_2\in X\union [2,k-1]$. If  $k_2\in [2,k-1]$, then $k_1\in [1,k]\cap X$ by the definition of $X$. This is impossible since $(X\union[1,k])\sect ([k+1,2t]\union Y)=\emptyset$;   If $k_2\in X$, then again by the definition of $X$ it follows that $k_1\in X\union[1,k]$,  which is impossible again in the same reason. Thus we have proved the claim.

  Now the edge ring of $G'$ is of the form $R'=S'/I_{G'}=S'/(I_{G_1}+I_{G_2})S'$, where $S'=S[x_{n+1}]$ and where  the variable $x_{n+1}$ corresponds to the edge $e_{n+1}=\{m+1,m+2\}$. The variable $x_j$ corresponds to the edge $e_j$ if  $e_j\in G$, and to $e\in E(G_1)$ if $e\in E(G_1)\setminus \{m+1,m+2\}$ and $e$ is mapped to $e_j$ by the identification map $G'\to G''=G$.  Let $L$ be the relation lattice of $ H(G)$ and $L'$ be the relation lattice of $H(G')$. Then $L\subset \ZZ^n$ and $L'\subset \ZZ^{n+1}$ are saturated lattices. We claim that $L$ and $L'$ satisfy the conditions (i), (ii) and (iii)  with respect to $\pi_j$, see  Definition~\ref{separable}.
We first show that $\pi_j(I_{L'})=I_L$. Let $f$ be a minimal generator of $I_L$. Then there exists an induced cycle $D$ of $G$ such that $f=f_D$. Since $G=G''$ it follows that $V(D)\subset V(\widetilde{G}_1)$ or $V(D)\subset V(G_2)$. Hence  there is an induced cycle $D'$ in $G'$ whose image under the identification map is $D$. Therefore, $\pi_j(f_{D'})=f_D$. This proves the condition (ii). Since (ii) is satisfied, it follows that $R'/(x_{n+1}-x_j)R'\iso R$. Moreover, $x_{n+1}-x_j$ is a non-zerodivisor on $R'$, since $R'$ is a domain. This implies that $\height I_{L'}=\height I_L$. In particular, $\rank L'=\rank L$. Thus  the condition (i) is also satisfied. Finally, by the definition of $G_1$ and $G_2$, there exist an induced cycle of $G_1$ with $e_{n+1}$ as an edge, say $C_1$,
and an induced cycle of $G_2$ with $e_j$ as an edge, say $C_2$.  Let $w_1=v(C_1)$ and $w_2=v(C_2)$. Then $w_1(n+1)\neq 0$, $w_1(j)=0$, $w_2(n+1)=0$ and $w_2(j)\neq 0$. This implies the condition (iii).

The implication (c)\implies (a) follows from Theorem~\ref{sufficient}.
\end{proof}

\begin{Definition}

\label{3.5}
{\rm We say that  a bipartite graph $G$  is {\it  separable by an edge} $e$, if there exist nonempty subgraphs $G_1$ and $G_2$  of $G$ such that

 (1) $E(G_1)\cap E(G_2)=\{e\}$ and $V(G_1)\cap V(G_2)=e$,

  (2) $E(G)=E(G_1)\cup E(G_2)$, $V(G)=V(G_1)\cup V(G_2)$,

  (3) the edge   $e$  is an {\it internal edge} of both $G_1$ and  $G_2$. Here an  edge of a graph is called internal,  if it belongs to a cycle of this graph.

A  bipartite graph is called {\it inseparable}, if it is not separable.}

  \end{Definition}

According to this definition, a  bipartite graph $G$ is separable if it can be obtained by gluing two disjoint bipartite graph along an internal edge. Figure 1 displays  a bipartite graph which  is separable by $e_1$ as well as by $e_2$.

  \begin{figure}[hbt]
\begin{center}
\psset{unit=1cm}
\begin{pspicture}(2.75,1.5)(9,4)

\rput(6,2.25){$\bullet$}
\rput(7.5,2.25){$\bullet$}
\rput(6,3.75){$\bullet$}
\rput(7.5,3.75){$\bullet$}

\psline[linewidth=0.6pt,linecolor=black](6,2.25)(7.5,2.25)
\psline[linewidth=0.6pt,linecolor=gray](7.5,2.25)(7.5,3.75)
\psline[linewidth=0.6pt,linecolor=gray](6,2.25)(6,3.75)

\psline[linewidth=0.6pt,linecolor=black](6,3.75)(7.5,3.75)

\rput(4.5,2.25){$\bullet$}
\psline[linewidth=0.6pt,linecolor=gray](4.5,2.25)(4.5,3.75)

\rput(4.5,3.75){$\bullet$}
\psline[linewidth=0.6pt,linecolor=black](4.5,2.25)(6,2.25)
\psline[linewidth=0.6pt,linecolor=black](4.5,3.75)(6,3.75)

\rput(3,2.25){$\bullet$}
\psline[linewidth=0.6pt,linecolor=gray](3,2.25)(3,3.75)

\rput(3,3.75){$\bullet$}
\psline[linewidth=0.6pt,linecolor=black](4.5,2.25)(3,2.25)
\psline[linewidth=0.6pt,linecolor=black](4.5,3.75)(3,3.75)

\rput(4.3,3){$e_1$}\rput(6.2,3){$e_2$}
\end{pspicture}
\end{center}
\caption{}\label{P}
\end{figure}

 With this concept introduced,  we can  show that a bipartite graph is inseparable in the sense of Definition~\ref{3.5} if and only if  $K[G]$ is inseparable. Thus the algebraic inseparability of $G$ has a combinatorial interpretation.

\begin{Corollary}
\label{unique}
Let $G$ be a bipartite graph. Then $K[G]$ is inseparable  if and only  $G$ is  inseparable.
\end{Corollary}

\begin{proof}
Assume first that $G$ is separable by an edge $e$. Then $e$ is a chord of a cycle say $C$ of $G$ by the condition (3) in Definition~\ref{3.5} and there is no crossing path chord of $C$ with respect to $e$. Hence $K[G]$ is separable by Theorem~\ref{crossing}.

Assume now that $K[G]$ is separable. In view of the proof (b)\implies (c) of  Theorem~\ref{crossing}, $G$ is obtained by gluing $G_1$ and $G_2$ along an internal edge. It follows that $G$ is separable, as required.
\end{proof}

As an example of the theory which we developed so far we consider coordinate rings of convex polyominoes. First we recall from \cite{Q} the definitions and some facts about  convex polyominoes.

Let $\mathbb{R}^2_+=\{(x, y) \in \mathbb{R}: x, y \geq 0\}$. We consider $(\mathbb{R}_+,\leq)$ as a partially ordered
set with $(x, y) \leq (z,w)$ if $x \leq z$ and $y \leq w$. Let $a, b \in \mathbb{Z}^2_+$. Then the set $[a, b] = \{c\in \mathbb{Z}^2_+: a \leq c \leq b\}$ is called an {\em interval}.

A {\em cell} $C$ is an interval of the form $[a, b]$, where $b = a + (1, 1)$. The elements of $C$
are called {\em vertices} of $C$. We denote the set of vertices of $C$ by $V(C)$. The intervals
$[a, a+(1, 0)], [a+(1, 0), a+(1, 1)], [a+(0, 1), a+(1, 1)]$ and $[a, a+(0, 1)]$ are called
{\em edges} of $C$. The set of edges of $C$  is denoted by $E(C)$.

Let $P$ be a finite collection of cells of $\mathbb{Z}^2_+$. Then two cells $C$ and $D$ are called
{\em connected} if there exists a sequence $ \mathcal{C}: C = C_1,C_2, \ldots ,C_t = D$ of cells of $\MP$ such
that for all $i = 1,\ldots , t - 1$ the cells $C_i$ and $C_{i+1}$ intersect in an edge. If the cells in
$\mathcal{C}$ are pairwise distinct, then $\mathcal{C}$ is called a {\em path} between $C$ and $D$. A finite collection
of cells $\MP$ is called a {\em polyomino} if every two cells of $\MP$ are connected. The vertex set
of $\MP$, denoted $V(\MP)$, is defined to be $\Union_{C\in \MP}V(C)$ and the edge set of $\MP$, denoted  $E(\MP)$,  is defined to be $\Union_{C\in \MP}E(C)$.  A polyomino is said to be {\em vertically} or {\em column convex} if its intersection with any vertical line is convex. Similarly, a polyomino is said to be {\em horizontally} or {\em row convex} if its intersection with any horizontal line is convex. A polyomino is said to be {\em convex} if it is row and column convex. Figure~\ref{P} shows two polyominos  whose cells are marked by gray color The right hand side polyomino is convex while the left one is not.

\begin{figure}[ht!]
\begin{tikzpicture}[line cap=round,line join=round,>=triangle 45,x=0.8cm,y=0.8cm]
\clip(4.76,0.7) rectangle (9.22,5.66);
\fill[fill=black,fill opacity=0.1] (5,1) -- (6,1) -- (6,2) -- (5,2) -- cycle;
\fill[fill=black,fill opacity=0.1] (7,1) -- (7,2) -- (6,2) -- (6,1) -- cycle;
\fill[fill=black,fill opacity=0.1] (8,1) -- (8,2) -- (7,2) -- (7,1) -- cycle;
\fill[fill=black,fill opacity=0.1] (6,3) -- (7,3) -- (7,4) -- (6,4) -- cycle;
\fill[fill=black,fill opacity=0.1] (6,3) -- (6,2) -- (7,2) -- (7,3) -- cycle;
\fill[fill=black,fill opacity=0.1] (7,4) -- (7,3) -- (8,3) -- (8,4) -- cycle;
\fill[fill=black,fill opacity=0.1] (6,4) -- (7,4) -- (7,5) -- (6,5) -- cycle;
\fill[fill=black,fill opacity=0.1] (8,5) -- (7,5) -- (7,4) -- (8,4) -- cycle;
\fill[fill=black,fill opacity=0.1] (8,4) -- (9,4) -- (9,5) -- (8,5) -- cycle;
\draw (5,1)-- (6,1);
\draw (6,1)-- (6,2);
\draw (6,2)-- (5,2);
\draw (5,2)-- (5,1);
\draw (7,1)-- (7,2);
\draw (7,2)-- (6,2);
\draw (6,2)-- (6,1);
\draw (6,1)-- (7,1);
\draw (8,1)-- (8,2);
\draw (8,2)-- (7,2);
\draw (7,2)-- (7,1);
\draw (7,1)-- (8,1);
\draw (6,3)-- (7,3);
\draw (7,3)-- (7,4);
\draw (7,4)-- (6,4);
\draw (6,4)-- (6,3);
\draw (6,3)-- (6,2);
\draw (6,2)-- (7,2);
\draw (7,2)-- (7,3);
\draw (7,3)-- (6,3);
\draw (7,4)-- (7,3);
\draw (7,3)-- (8,3);
\draw (8,3)-- (8,4);
\draw (8,4)-- (7,4);
\draw (6,4)-- (7,4);
\draw (7,4)-- (7,5);
\draw (7,5)-- (6,5);
\draw [line width=0.4pt] (6,5)-- (6,4);
\draw (8,5)-- (7,5);
\draw (7,5)-- (7,4);
\draw (7,4)-- (8,4);
\draw (8,4)-- (8,5);
\draw (8,4)-- (9,4);
\draw (9,4)-- (9,5);
\draw (9,5)-- (8,5);
\draw (8,5)-- (8,4);
\end{tikzpicture}
\quad\quad\quad\quad\quad\quad
\begin{tikzpicture}[line cap=round,line join=round,>=triangle 45,x=0.8cm,y=0.8cm]
\clip(4.24,0.62) rectangle (9.28,4.7);
\fill[fill=black,fill opacity=0.1] (5,1) -- (6,1) -- (6,2) -- (5,2) -- cycle;
\fill[fill=black,fill opacity=0.1] (7,1) -- (7,2) -- (6,2) -- (6,1) -- cycle;
\fill[fill=black,fill opacity=0.1] (6,3) -- (6,2) -- (7,2) -- (7,3) -- cycle;
\fill[fill=black,fill opacity=0.1] (5,2) -- (6,2) -- (6,3) -- (5,3) -- cycle;
\fill[fill=black,fill opacity=0.1] (7,2) -- (8,2) -- (8,3) -- (7,3) -- cycle;
\fill[fill=black,fill opacity=0.1] (7,3) -- (8,3) -- (8,4) -- (7,4) -- cycle;
\fill[fill=black,fill opacity=0.1] (8,3) -- (9,3) -- (9,4) -- (8,4) -- cycle;
\draw (5,1)-- (6,1);
\draw (6,1)-- (6,2);
\draw (6,2)-- (5,2);
\draw (5,2)-- (5,1);
\draw (7,1)-- (7,2);
\draw (7,2)-- (6,2);
\draw (6,2)-- (6,1);
\draw (6,1)-- (7,1);
\draw (6,3)-- (6,2);
\draw (6,2)-- (7,2);
\draw (7,2)-- (7,3);
\draw (7,3)-- (6,3);
\draw (5,2)-- (6,2);
\draw (6,2)-- (6,3);
\draw (6,3)-- (5,3);
\draw (5,3)-- (5,2);
\draw (7,2)-- (8,2);
\draw (8,2)-- (8,3);
\draw (8,3)-- (7,3);
\draw (7,3)-- (7,2);
\draw (7,3)-- (8,3);
\draw (8,3)-- (8,4);
\draw (8,4)-- (7,4);
\draw (7,4)-- (7,3);
\draw (8,3)-- (9,3);
\draw (9,3)-- (9,4);
\draw (9,4)-- (8,4);
\draw (8,4)-- (8,3);
\end{tikzpicture}
\caption{}\label{P}
\end{figure}

Let $\MP$ be a polyomino, and let $K$ be a field. We denote by $S$ the polynomial over $K$ with variables $x_{ij}$ with $(i,j)\in V(\MP)$. A $2$-minor $x_{ij}x_{kl}-x_{il}x_{kj}\in S$ with $i<k$ and $j<l$ is called an {\em inner minor} of $\MP$ if all the cells $[(r,s),(r+1,s+1)]$ with $i\leq r\leq k-1$ and $j\leq s\leq l-1$ belong to $\MP$. The ideal $I_\MP\subset S$ generated by all inner minors of $\MP$ is called the {\em polyomino ideal} of $\MP$. We also set $K[\MP]=S/I_\MP$. It has been shown in \cite{Q} that $K[\MP]$ is a domain, and hence a toric ring,  if $\MP$ is convex. A toric parametrization of $K[\MP]$ will be given in the following proof.

\begin{Theorem}
\label{polyomino}
Let $\MP$ be  a convex polyomino. Then $k[\MP]$ is inseparable.
\end{Theorem}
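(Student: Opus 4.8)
The plan is to realize $K[\MP]$ as the edge ring of a bipartite graph and then apply the combinatorial criterion of Corollary~\ref{unique}. Concretely, $K[\MP]$ carries the toric parametrization $x_{ij}\mapsto s_it_j$, under which every inner minor $x_{ij}x_{kl}-x_{il}x_{kj}$ maps to $0$; this exhibits $K[\MP]=S/I_\MP$ as the edge ring $K[G]$ of the bipartite graph $G$ whose two vertex classes are the rows and columns meeting $\MP$ and whose edges are the lattice points of $V(\MP)$ (an edge joins row $i$ to column $j$ precisely when $(i,j)\in V(\MP)$). By \cite{Q} one has $I_\MP=I_G$ and $K[\MP]$ is a domain, so it suffices to prove that the bipartite graph $G$ is inseparable.

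First I would record a structural consequence of convexity. Since $I_\MP$ is generated by the inner $2$-minors, it is generated in degree $2$; as the minimal generators of $I_G$ are exactly the binomials $f_C$ attached to the induced cycles $C$ of $G$ (see \cite{OH}), every induced cycle of $G$ must be a $4$-cycle. Thus $G$ is chordal bipartite: every cycle of length $\ge 6$ has a chord. I would then use this to reduce the verification of Corollary~\ref{unique} to $6$-cycles. Indeed, if a cycle $C$ had a \emph{unique} chord $e$ but length $\ge 8$, then $e$ would split $C$ into two even cycles whose lengths add up to $\abs{C}+2\ge 10$, so at least one of them has length $\ge 6$; that cycle is chordless, since any of its chords would be a second chord of $C$, contradicting chordal bipartiteness. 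Hence only cycles of length exactly $6$ with a unique chord need to be treated.

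For such a cycle I would write the chord as $e=\{i_a,j_b\}$, so that $C$ visits the lattice points $(i_a,j'),(i',j'),(i',j_b),(i'',j_b),(i'',j''),(i_a,j'')$, with $(i_a,j_b)\in V(\MP)$ the chord; the two halves $C_1\cup e$ and $C_2\cup e$ are $4$-cycles, hence correspond to full rectangles of cells sharing the corner $p=(i_a,j_b)$. The key input is that each row and each column of $V(\MP)$ is an interval. Comparing the three values $j',j_b,j''$ occurring in column $i_a$ and the three values $i',i_a,i''$ occurring in row $j_b$, in every configuration \emph{except} the one where $j_b$ lies between $j',j''$ \emph{and} $i_a$ lies between $i',i''$, interval-convexity forces one of $(i',j'')$ or $(i'',j')$ into $V(\MP)$; this is a crossing chord of $C$ with respect to $e$, which is exactly what Corollary~\ref{unique} (via Theorem~\ref{crossing}) requires.

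The remaining \emph{central} configuration, in which $p$ sits strictly between the two rectangles, is where the real difficulty lies, and it is resolved by invoking that $\MP$ is a genuine polyomino, i.e.\ that its cells are edge-connected. In this configuration the two rectangles lie in opposite quadrants about $p$; were there no crossing path chord, convexity would force $V(\MP)$ to be contained in these two quadrants, so the rectangles would meet only in the single vertex $p$ and $\MP$ would be disconnected --- impossible. Thus connectedness produces additional vertices of $V(\MP)$ bridging the two quadrants, and such a bridging vertex yields a path chord of the form $j'-\beta-j''$ or $i'-\gamma-i''$ crossing $e$. The main obstacle, requiring the most care, is precisely this interplay between convexity and connectedness in the central case: proving that the bridge guaranteed by connectedness can always be organized into a crossing path chord. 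Once this is established, Corollary~\ref{unique} shows that $K[\MP]=K[G]$ is inseparable.
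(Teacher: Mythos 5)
Your setup coincides step for step with the paper's own proof: the same bipartite graph $G(\MP)$ on rows and columns with edges the points of $V(\MP)$, the same reduction via quadratic generation of $I_\MP$ (induced cycles are $4$-cycles) to $6$-cycles with a unique chord, and the same disposal of the non-central configurations by the interval property of rows and columns of $V(\MP)$. The problem is the central configuration, which you yourself flag as ``the main obstacle, requiring the most care'': this is precisely the step you have not carried out, and the one concrete mechanism you propose for it is false. A bridging vertex does \emph{not} in general yield a crossing path chord of your claimed form $v_{j'},h_{\beta},v_{j''}$ or $h_{i'},v_{\gamma},h_{i''}$. For a counterexample, let $\MP$ consist of the two $2\times 2$ blocks of cells with vertex sets $[1,3]\times[1,3]$ and $[3,5]\times[3,5]$, joined by the single cell $[(2,3),(3,4)]$; this is a convex polyomino with $V(\MP)=([1,3]\times[1,3])\cup([3,5]\times[3,5])\cup\{(2,4)\}$. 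The $6$-cycle $h_3,v_1,h_1,v_3,h_5,v_5$ (using the vertices $(3,1),(1,1),(1,3),(5,3),(5,5),(3,5)$ of $\MP$) has the unique chord $e=\{h_3,v_3\}$ and is central, yet no admissible row meets both columns $1$ and $5$ (column $1$ occupies rows $[1,3]$, column $5$ occupies rows $[3,5]$, and their only common row, $3$, lies on the cycle), and symmetrically no admissible column meets both rows $1$ and $5$. So neither of your length-two paths exists; the shortest crossing path chord here is the length-three path $v_1,h_2,v_4,h_5$, built from $(2,1),(2,4),(5,4)\in V(\MP)$.

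What is missing is the lemma that actually closes the central case, and the paper proves it in a sharper form than ``connectedness forces a bridge.'' With the chord at $(i,j)$ and $\ell_1<i<\ell_2$, $k_1<j<k_2$, connectedness and convexity force one of the two \emph{diagonal neighbours} $(i-1,j+1)$ or $(i+1,j-1)$ of the chord point to lie in $V(\MP)$. Assuming $(i-1,j+1)\in V(\MP)$, the interval property applied to column $k_1$, which contains $(\ell_1,k_1)$ and $(i,k_1)$, gives $(i-1,k_1)\in V(\MP)$, and applied to row $\ell_2$, which contains $(\ell_2,j)$ and $(\ell_2,k_2)$, gives $(\ell_2,j+1)\in V(\MP)$; then $v_{k_1},h_{i-1},v_{j+1},h_{\ell_2}$ is a crossing path chord of $C$ with respect to $e$, as required by Corollary~\ref{unique} via Theorem~\ref{crossing}. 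Your argument becomes complete once you prove this diagonal-neighbour claim and assemble the length-three path as above; as written, the crux of the theorem is asserted rather than proved, and the shortcut you do propose fails on the example given.
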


\begin{proof}
Set
$
A_\MP=\{h_i: (i,j)\in V(\MP) \text{ for some } j\in \mathbb{Z}_+\}
$
 and
$
B_\MP=\{v_j: (i,j)\in V(\MP) \text{ for some } i\in \mathbb{Z}_+\}.
$
We associate with  $\MP$ a bipartite graph $G(\MP)$ such that $V(G(\MP))=A_P\cup B_\MP$ and $E(G(\MP))=\{\{h_i,v_j\}:  (i,j)\in V(\MP)\}$. Figure~\ref{G} shows a polyomino and its associated bipartite graph.

\begin{figure}[ht!]
\definecolor{qqqqff}{rgb}{0,0,1}
\begin{tikzpicture}[line cap=round,line join=round,>=triangle 45,x=0.85cm,y=0.85cm]
\clip(-1.84,-0.44) rectangle (3.8,3.76);
\fill[fill=black,fill opacity=0.1] (0,0) -- (1,0) -- (1,1) -- (0,1) -- cycle;
\fill[fill=black,fill opacity=0.1] (0,1) -- (1,1) -- (1,2) -- (0,2) -- cycle;
\fill[fill=black,fill opacity=0.1] (1,1) -- (2,1) -- (2,2) -- (1,2) -- cycle;
\fill[fill=black,fill opacity=0.1] (2,2) -- (3,2) -- (3,3) -- (2,3) -- cycle;
\fill[fill=black,fill opacity=0.1] (0,0) -- (0,1) -- (-1,1) -- (-1,0) -- cycle;
\fill[fill=black,fill opacity=0.1] (2,1) -- (3,1) -- (3,2) -- (2,2) -- cycle;
\draw (0,0)-- (1,0);
\draw (1,0)-- (1,1);
\draw (1,1)-- (0,1);
\draw (0,1)-- (0,0);
\draw (0,1)-- (1,1);
\draw (1,1)-- (1,2);
\draw (1,2)-- (0,2);
\draw (0,2)-- (0,1);
\draw (1,1)-- (2,1);
\draw (2,1)-- (2,2);
\draw (2,2)-- (1,2);
\draw (1,2)-- (1,1);
\draw (2,2)-- (3,2);
\draw (3,2)-- (3,3);
\draw (3,3)-- (2,3);
\draw (2,3)-- (2,2);
\draw (0,0)-- (0,1);
\draw (0,1)-- (-1,1);
\draw (-1,1)-- (-1,0);
\draw (-1,0)-- (0,0);
\draw (2,1)-- (3,1);
\draw (3,1)-- (3,2);
\draw (3,2)-- (2,2);
\draw (2,2)-- (2,1);
\draw (-1.64,0.1) node[anchor=north west] {\begin{tiny}(1,1)\end{tiny}};
\draw (0.64,0.12) node[anchor=north west] {\begin{tiny}(3,1)\end{tiny}};
\draw (2.62,1.1) node[anchor=north west] {\begin{tiny}(5,2)\end{tiny}};
\draw (0.84,1.1) node[anchor=north west] {\begin{tiny}(3,2)\end{tiny}};
\draw (-1.68,1.5) node[anchor=north west] {\begin{tiny}(1,2)\end{tiny}};
\draw (-0.68,2.5) node[anchor=north west] {\begin{tiny}(2,3)\end{tiny}};
\draw (-0.9,1.5) node[anchor=north west] {\begin{tiny}(2,2)\end{tiny}};
\draw (1.68,1.1) node[anchor=north west] {\begin{tiny}(4,2)\end{tiny}};
\draw (-0.6,0.1) node[anchor=north west] {\begin{tiny}(2,1)\end{tiny}};
\draw (0.3,2.5) node[anchor=north west] {\begin{tiny}(3,3)\end{tiny}};
\draw (1.1,2.5) node[anchor=north west] {\begin{tiny}(4,3)\end{tiny}};
\draw (2.85,2.5) node[anchor=north west] {\begin{tiny}(5,3)\end{tiny}};
\draw (1.56,3.5) node[anchor=north west] {\begin{tiny}(4,4)\end{tiny}};
\draw (2.54,3.5) node[anchor=north west] {\begin{tiny}(5,4)\end{tiny}};
\begin{scriptsize}
\end{scriptsize}
\end{tikzpicture}
\quad\quad\quad\quad
\begin{tikzpicture}[line cap=round,line join=round,>=triangle 45,x=1.2cm,y=1.2cm]
\clip(7,0.5) rectangle (12.92,4.02);
\draw (8,3)-- (8.62,1.04);
\draw (8,3)-- (9.52,0.98);
\draw (9,3)-- (8.62,1.04);
\draw (9,3)-- (9.52,0.98);
\draw (9,3)-- (10.52,0.98);
\draw (10,3)-- (8.62,1.04);
\draw (10,3)-- (9.52,0.98);
\draw (10,3)-- (10.52,0.98);
\draw (11,3)-- (9.52,0.98);
\draw (11,3)-- (10.52,0.98);
\draw (11,3)-- (11.52,1);
\draw (12,3)-- (9.52,0.98);
\draw (12,3)-- (10.52,0.98);
\draw (12,3)-- (11.52,1);
\draw (7.8,3.4) node[anchor=north west] {\begin{scriptsize}$h_1$\end{scriptsize}};
\draw (8.8,3.4) node[anchor=north west] {\begin{scriptsize}$h_2$\end{scriptsize}};
\draw (9.78,3.4) node[anchor=north west] {\begin{scriptsize}$h_3$\end{scriptsize}};
\draw (10.76,3.4) node[anchor=north west] {\begin{scriptsize}$h_4$\end{scriptsize}};
\draw (11.79,3.4) node[anchor=north west] {\begin{scriptsize}$h_5$\end{scriptsize}};
\draw (8.41,1.01) node[anchor=north west] {\begin{scriptsize}$v_1$\end{scriptsize}};
\draw (9.34,1.01) node[anchor=north west] {\begin{scriptsize}$v_2$\end{scriptsize}};
\draw (10.32,1.01) node[anchor=north west] {\begin{scriptsize}$v_3$\end{scriptsize}};
\draw (11.32,1.01) node[anchor=north west] {\begin{scriptsize}$v_4$\end{scriptsize}};
\begin{scriptsize}
\fill [color=black] (0,0) circle (1.5pt);
\fill [color=black] (1,0) circle (1.5pt);
\fill [color=black] (2,1) circle (1.5pt);
\fill [color=black] (3,2) circle (1.5pt);
\draw[color=black] (3.14,2.28) node {$L$};
\fill [color=black] (3,1) circle (1.5pt);
\draw[color=black] (3.14,1.28) node {$J$};
\fill [color=black] (8,3) circle (1.5pt);
\fill [color=black] (9,3) circle (1.5pt);
\fill [color=black] (10,3) circle (1.5pt);
\fill [color=black] (11,3) circle (1.5pt);
\fill [color=black] (12,3) circle (1.5pt);
\fill [color=black] (8.62,1.04) circle (1.5pt);
\fill [color=black] (9.52,0.98) circle (1.5pt);
\fill [color=black] (10.52,0.98) circle (1.5pt);
\fill [color=black] (11.52,1) circle (1.5pt);
\end{scriptsize}
\end{tikzpicture}
\caption{}\label{G}
\end{figure}
We let $K[G(\MP)]$ be the subring of the polynomial ring $T = K[A_\MP\cup B_\MP]$  generated by the monomials $h_iv_j$ with  $\{h_i,v_j\}\in E(G(\MP))$. In other words, $K[G(\MP)]$ is the edge ring of the bipartite graph $G(\MP)$. Let, as above,  $S=K[x_{ij}: (i,j)\in V(\MP)]$. As shown in \cite{Q}, $I_\MP$ is the kernel of the $K$-algebra homomorphism $S\to K[G(\MP)]$ with  $x_{ij}\mapsto h_iv_j$. Thus $K[\MP]\iso K[G(\MP)]$, and $K[G(\MP)]$ is the desired toric parametrization.  It is known from \cite{OH} that $I_\MP$ is generated by the binomials corresponding  to the cycles in $G(\MP)$.

By using Corollary~\ref{unique} it is enough to show that  for any cycle $C$ of $G(\MP)$ which has a unique chord, say $e=\{h_i,v_j\}$,  there is a  crossing path chord of $C$ with respect to  $e$.  Since $G(\MP)$ is a bipartite graph, $C$ is an even cycle, and also $|C|>4$ because $C$ has a chord. Since every induced cycle of $G(\MP)$ is a $4$-cycle and since $C$ has only one chord, $C$ must be a $6$-cycle. Assume that the vertices of  $C$ are $h_i, v_{k_1}, h_{{\ell}_1},v_j,h_{{\ell}_2}, v_{k_2}$, listed counterclockwise, and the chord of $C$ is  $e=(h_i,v_j)$ as above.
With the notation introduced, it follows that
\[
(i,j),(i,k_2),(\ell_2,k_2),(\ell_2,j),(\ell_1,j),(\ell_1,k_1,),(i,k_1)
\]
are  vertices of $\MP$. We consider the following cases.

Suppose first that $(\ell_1-i)(\ell_2-i)>0$. Without loss of generality,  we may assume  $\ell_2>\ell_1>i$.  Then, since $\MP$ is convex and $(i,k_2)$ and $(\ell_2,k_2)$ are both vertices of $\MP$, we have $(\ell_1,k_2)$ is a vertex of $\MP$. It follows that $\{h_{\ell_1},v_{k_2}\}$ is an edge of $G(\MP)$ which is  a   chord  of $C$, contradicting our assumption that $C$ has a unique chord.
Similarly the case that $(k_1-j)(k_2-j)>0$ is also not possible.

It remains to consider the case when $(\ell_1-i)(\ell_2-i)<0$ and $(k_1-j)(k_2-j)<0$.  Without loss of generality we may assume that $\ell_1<i<\ell_2$ and $k_1<j<k_2$. Then either $(i-1,j+1)$ or $(i+1,j-1)$ is a vertex of $\MP$ by the connectedness and convexity of $\MP$.

We may assume that $(i-1,j+1)\in V(\MP)$. Note that $(i-1,k_1)$ and $(\ell_2,j+1)$ belong to $V(\MP)$. Thus  we obtain the path $v_{k_1}, h_{i-1},v_{j+1}, h_{\ell_2}$ in $G(\MP)$ which is a crossing path chord of  $C$ with respect to  $e$.
\end{proof}

\section{On the semi-rigidity  of bipartite graphs}

We say that $R$ is {\em semi-rigid} if $T^1(R)_a=0$ for all $a\in \ZZ H$ with   $-a\in H$.  In this subsection we consider this weak form of rigidity which however is stronger than inseparability.

We again let $G$ be a finite bipartite graph on the vertex set $[m]$ with edge set $E(G)=\{e_1,e_2,\ldots, e_n\}$. The edge ring of $G$ is the toric ring $K[H]$ whose generators are the elements  $h_i=\sum_{j\in V(e_i)}\delta_j$, $i=1,\ldots,n$. Here  $\delta_1,\ldots, \delta_m$ is the canonical basis of $\ZZ^m$.  As above we may assume that each edge of $G$ belongs to a cycle and that $C_1,C_2,\ldots C_s$ is the set of cycles of $G$ and where $C_1,\ldots,C_{s_1}$ is the set of induced cycles of $G$.

Let $C_i$ be one of these cycles with  edges $e_{i_1},e_{i_2},\ldots,e_{i_{2t}}$ labeled counterclockwise.  Two distinct edges $e$ and $e'$ of $C_i$ are said to be of the {\em same parity in} $C_i$ if   $e=e_{i_j}$ and $e'=e_{i_k}$ with $j-k$ an even number.

\begin{Lemma}
\label{terrible}
Let $a=-h_j-h_k$, and  let $i\in [s_1]$. Then
$i\in \mathcal{F}'_a$, if and only if $e_j$ and $e_k$ have the same parity in $C_i$. Moreover, if  $\mathcal{F}'_a\neq \emptyset$, then $KL_a=KL_{-h_j}+KL_{-h_k}$.
\end{Lemma}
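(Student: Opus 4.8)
The plan is to establish the two assertions in turn, the characterization of $\mathcal{F}'_a$ first and the identity afterward, with the real work in the latter. Fix $i\in[s_1]$; then $i\in\mathcal{F}'_a$ means $-h_j-h_k+h(v_i)\in H$. Since $C_i$ is a cycle, $h(v_i)=\sum_{\ell\in V(C_i)}\delta_\ell$, so the coordinate of $-h_j-h_k+h(v_i)$ at a vertex $\ell$ is obtained from the indicator function of $V(C_i)$ by subtracting $1$ for each of $e_j,e_k$ incident to $\ell$. As every element of $H$ has nonnegative coordinates, membership in $H$ forces $V(e_j),V(e_k)\subseteq V(C_i)$ and $V(e_j)\cap V(e_k)=\emptyset$; since $C_i$ is induced and hence chordless, this means that $e_j,e_k$ are non-adjacent edges of $C_i$. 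Granting this, $-h_j-h_k+h(v_i)=\sum_{\ell\in S}\delta_\ell$ with $S=V(C_i)\setminus(V(e_j)\cup V(e_k))$, and I will use the elementary fact that a $\{0,1\}$-vector $\sum_{\ell\in S}\delta_\ell$ lies in $H$ precisely when the induced subgraph $G[S]$ has a perfect matching. Here the induced hypothesis pays off again: the only edges of $G$ among the vertices of $C_i$ are those of $C_i$, so a matching on $S\subseteq V(C_i)$ can only use edges of $C_i$; deleting the two non-adjacent edges $e_j,e_k$ splits $C_i$ into two arcs, and $S$ has such a matching iff both arcs have an even number of vertices. A direct length count shows this happens exactly when $e_j$ and $e_k$ have the same parity in $C_i$, which proves the first assertion.

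For the identity I would first record, using Lemma~\ref{basic}, formula (\ref{minus}), and once more the chordlessness of induced cycles, that $KL_{-h_j}=\Span_K\{v_i:i\in[s_1],\ e_j\notin E(C_i)\}$ and similarly for $KL_{-h_k}$, while $KL_a=\Span_K\{v_i:i\in[s_1]\setminus\mathcal{F}'_a\}$. By the first assertion $[s_1]\setminus\mathcal{F}'_a$ consists of the indices $i$ for which $e_j$ or $e_k$ is not an edge of $C_i$, together with those for which both are edges but of opposite parity. The inclusion $KL_{-h_j}+KL_{-h_k}\subseteq KL_a$ is clear from these index sets, so the content is the reverse inclusion, and for it it suffices to show that if $C_i$ is an induced cycle containing $e_j,e_k$ as edges of opposite parity, then $v_i\in KL_{-h_j}+KL_{-h_k}$ (the other generators of $KL_a$ already lie in $KL_{-h_j}$ or $KL_{-h_k}$). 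This is exactly where the hypothesis $\mathcal{F}'_a\neq\emptyset$ enters: it provides an induced cycle $C_{i_0}$ containing $e_j,e_k$ with the same parity, and since same-parity edges are non-adjacent, $e_j=\{a,a'\}$ and $e_k=\{b,b'\}$ are vertex-disjoint.

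The decisive step is a cut-and-paste of $C_i$ and $C_{i_0}$. Orienting both cycles to traverse $e_j$ from $a$ to $a'$, I would write $C_i=e_j\,\pi_1\,e_k\,\pi_2$ and $C_{i_0}=e_j\,\rho_1\,e_k\,\rho_2$ for the four connecting arcs; the opposite (resp. same) parity of $e_j,e_k$ means that $C_i$ traverses $e_k$ from $b'$ to $b$ while $C_{i_0}$ traverses it from $b$ to $b'$. Then I form the two closed walks $W_1=e_j\,\pi_1\,\rho_2$ and $W_2=e_k\,\rho_2\,\pi_2^{-1}$. One checks that $W_1$ never visits $b$ and $W_2$ never visits $a'$; as both are even closed walks in the bipartite graph $G$, Lemma~\ref{walk} yields $v(W_1)\in KL_{-h_k}$ and $v(W_2)\in KL_{-h_j}$. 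Finally, because $W_1$ and $W_2$ run through the common arc $\rho_2$ in the same direction, its contribution cancels in $v(W_1)-v(W_2)$, and the surviving pieces $e_j,\pi_1,e_k,\pi_2$ reassemble exactly $C_i$, so that $v(C_i)=v(W_1)-v(W_2)\in KL_{-h_j}+KL_{-h_k}$, completing the reverse inclusion. I expect this last identity—arranging the orientations of the four arcs so that $v(C_i)=v(W_1)-v(W_2)$ holds on the nose—to be the main obstacle; everything else reduces to a coordinate computation or a direct appeal to Lemmas~\ref{basic} and \ref{walk}.
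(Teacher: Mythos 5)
Your proof is correct and follows essentially the same route as the paper: the first assertion by the same coordinate-plus-chordlessness analysis (your perfect-matching criterion for a $\{0,1\}$-vector to lie in $H$ is just a repackaging of the paper's argument that the two odd arcs cannot sum into $H$ without an edge crossing the chordless cycle), and the second by the identical cut-and-paste of the opposite-parity cycle $C_i$ with the same-parity cycle supplied by $\mathcal{F}'_a\neq\emptyset$ into two even closed walks, each avoiding one endpoint of $e_j$ or $e_k$, so that Lemma~\ref{walk} applies and $v(C_i)$ is a signed combination of the two walk vectors. The only cosmetic difference is which arc the two walks share and cancel (you cancel an arc of the auxiliary cycle, the paper cancels the arc of $C$ between $e_j$ and $e_k$), which does not change the argument.
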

\begin{proof}Since $i\in [s_1]$, the cycle $C_i$ is an induced cycle. Let  $e_{i_1},e_{i_2},\ldots,e_{i_{2t}}$ be the edges of $C_i$ labeled counterclockwise. Then $h(v_i)= \sum_{k=1}^t h_{i_{2k-1}}= \sum_{k=1}^th_{i_{2k}}$. Thus if $e_j$ and $e_k$ have the same parity in $C_i$, it follows that $h_j$ and $h_k$ belong to either one of the above summands, so that $a+h(v_i)\in H$. This shows that $i\in \mathcal{F}'_a$. Conversely, suppose that  $i\in \mathcal{F}'_a$.  Let $h_j= \delta_{j_1}+\delta_{j_2}$ and   $h_k= \delta_{k_1}+\delta_{k_2}$. For simplicity, we may assume that  $\delta_1,\ldots,\delta_{2t}$ correspond to the vertices of $C_i$ and that the edges of $C_i$ correspond to the elements $\delta_{2t}+\delta_1$ and $\delta_i+\delta_{i+1}$ for $i=1,\ldots,2t-1$. Then $h(v_i)=  \delta_1+\cdots+\delta_{2t}$ and
\begin{eqnarray}
\label{inh}
a+h(v_i)=-\delta_{j_1}-\delta_{j_2}-\delta_{k_1}-\delta_{k_2}+\delta_1+\ldots+\delta_{2t}\in H.
\end{eqnarray}
In general, let $h\in H$, $h=\sum_{i=1}^mz_i\delta_i$ with $z_i\in \ZZ$. Then it follows that $z_i\geq 0$ for all $i$.
Hence it follows from (\ref{inh}) that $e_j$ and $e_k$ are edges of $C_i$ with $V(e_j)\cap V(e_k)=\emptyset$ (that is, the vertices $j_1,j_2,k_1,k_2$ are pairwise different),  and that $a+h(v_i)$ is the sum of all $\delta_i$, $i=1,\ldots, 2t$ with $i\neq j_1,j_2, k_1,k_2$.  Suppose the edges $e_j$ and $e_k$  do not have the same parity in $C_i$. Then $a+h(v_i)$ is the sum of  $S_1$ and $S_2$,  where each of  $S_1$ and $S_2$  consists of an odd sum of $\delta_i$. Hence none of these summands  belongs to $H$. Since $S_1+S_2\in H$, there exists  a summand $\delta_{r_1}$ in $S_1$ and a summand $\delta_{r_2}$ in $S_2$ such that $\delta_{r_1}+\delta_{r_2}\in H$. This implies that $\{r_1,r_2\}\in E(C_i)$ because $C_i$ has no chord.  However this is not possible.  Indeed,  if $\{r_1,r_2\}\in E(C_i)$, then $r_2 \equiv r_1+1\mod 2t$.  But this is not the case.

Next we show that $KL_a=KL_{-h_j}+KL_{-h_k}$ if
$\mathcal{F}'_a\neq \emptyset$. Note that  $\MF'_a\subseteq \MF'_{-h_j}\cap \MF'_{-h_k}$, we have $KL_{-h_j}+KL_{-h_k}\subseteq KL_a$ by Lemma~\ref{basic}.
In order to obtain the desired equality,  we only need to show that $v_i\in KL_{-h_j}+KL_{-h_k}$ for each $i\in  (\MF'_{-h_j}\cap \MF'_{-h_k})\setminus \MF'_a$.

Let $i\in  (\MF'_{-h_j}\cap \MF'_{-h_k})\setminus \MF'_a$. Since $\MF'_a\neq \emptyset$, there exists an induced  cycle, say  $C$,  such that $e_j$ and $e_k$ have the same parity in $C$. We may assume that $V(C)=[2t]$ and $E(C)=\{\{1,2\},\{2,3\},\ldots,\{2t-1,2t\},\{2t,1\}\}$, and  that $e_j=\{1,2\}$ and $e_k=\{2k-1,2k\}$  with $1<k\leq t$. Since $e_j,e_k$ do not have the same  parity in $C_i$, we can assume without loss of generality that $E(C_i)$ is
 \begin{eqnarray*}
 &&\{\{1,2\},\{2,i_1\},\{i_1,i_2\},\ldots,\{i_{2h},i_{2h+1}\},\{i_{2h+1},2k\},\{2k,2k-1\}\}\\
&&\cup \{\{2k-1,i_{2h+2}\},\ldots, \{i_{2\ell}, i_{2\ell+1}\},\{i_{2\ell+1},1\}\}.
 \end{eqnarray*}
 Then we have even closed walks
  \[
  W_1\: 2 ,3, \ldots, (2k-1),2k,i_{2h+1},i_{2h}\ldots,i_1,2
  \]
and
\[
W_2: 1,2,3,\ldots,(2k-1),i_{2h+2},\ldots,i_{2\ell+1},1.
\]
 Let $w_1=v(W_1)$ and $w_2=v(W_2)$. Since the vertex $1$ belongs to $e_j$ but is not a vertex of $W_1$, Lemma~\ref{walk} implies that    $w_1\in KL_{-h_j}$. Similarly it follows that $w_2\in  KL_{-h_k}$.   Since  $v_i$ differs at most by  a sign from either  $w_1-w_2$ or $w_1+w_2$, it follows that  $v_i\in KL_{-h_j}+KL_{-h_k}$, as required.
\end{proof}

\begin{Lemma} Suppose that $\MF'_{-h_j}\neq \MF'_{-h_k}$. Then $KL_{-h_j}\neq KL_{-h_k}$.
\label{not}
\end{Lemma}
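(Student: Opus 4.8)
The plan is to reduce the claim to a single witnessing cycle and then separate the two spaces by a coordinate functional. Since $\MF'_{-h_j}\neq\MF'_{-h_k}$, after interchanging $j$ and $k$ if necessary I may assume there is an index $i\in\MF'_{-h_j}\setminus\MF'_{-h_k}$. By construction $C_i$ is an induced cycle, and by (\ref{minus}) the condition $i\in\MF'_{-h_j}$ means $V(e_j)\subset V(C_i)$, while $i\notin\MF'_{-h_k}$ means $V(e_k)\not\subset V(C_i)$. Because $C_i$ is induced it has no chord, so $V(e_j)\subset V(C_i)$ actually forces $e_j$ to be an \emph{edge} of $C_i$; likewise $e_k$ is certainly not an edge of $C_i$. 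The goal will then be to show that $v_i$ lies in $KL_{-h_k}$ but not in $KL_{-h_j}$, which immediately yields $KL_{-h_j}\neq KL_{-h_k}$.

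For the first half I would argue that $v_i$ is literally one of the spanning vectors of $KL_{-h_k}$: since $V(e_k)\not\subset V(C_i)$ we have $i\in[s_1]\setminus\MF'_{-h_k}$, so $v_i$ generates $KL'_{-h_k}$, and Lemma~\ref{basic} identifies $KL'_{-h_k}$ with $KL_{-h_k}$. For the second half the key device is the $e_j$-coordinate. By Lemma~\ref{basic} the space $KL_{-h_j}=KL'_{-h_j}$ is spanned by those $v_\ell$ with $\ell\in[s_1]\setminus\MF'_{-h_j}$, and for each such $\ell$ the edge $e_j$ is not an edge of $C_\ell$, so by the defining formula (\ref{vector}) its $e_j$-th coordinate $v_\ell(j)$ vanishes. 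Hence every vector in $KL_{-h_j}$ has zero $e_j$-coordinate. Since $e_j$ is an edge of $C_i$, however, (\ref{vector}) gives $v_i(j)=\pm1\neq0$, so $v_i\notin KL_{-h_j}$.

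The only genuinely delicate point is the translation step in the first paragraph: I must use that $C_i$ being induced collapses the distinction between ``$e_j$ is a chord'' and ``$e_j$ is an edge'', so that $V(e_j)\subset V(C_i)$ can be upgraded to $e_j\in E(C_i)$ and, dually, that whether $e_k$ lies in $E(C_i)$ is already controlled by $V(e_k)\subset V(C_i)$. Once this is in place, the coordinate argument is routine and does not even require the full strength of Lemma~\ref{basic}, since $[s]\setminus\MF_{-h_j}$ also consists only of cycles for which $e_j$ is not an edge; I will phrase everything through induced cycles merely for convenience.
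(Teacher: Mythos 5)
Your proof is correct and is essentially the paper's own argument: both pick $i\in\MF'_{-h_j}\setminus\MF'_{-h_k}$ (the paper leaves the harmless relabeling of $j$ and $k$ implicit), observe that $v_i$ is a spanning vector of $KL_{-h_k}$, and separate it from $KL_{-h_j}$ by noting that every vector in $KL_{-h_j}$ has vanishing $j$-th coordinate while $v_i(j)=\pm 1$ since $e_j\in E(C_i)$. Your explicit upgrade of $V(e_j)\subset V(C_i)$ to ``$e_j$ is an edge of $C_i$'' via inducedness of $C_i$ merely spells out a step the paper uses silently.
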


\begin{proof} Let $i\in \MF'_{-h_j}\setminus \MF'_{-h_k}$. Then $v_i\in KL_{-h_k}$ and $v_i(j)\neq 0$, since $e_j$ is an edge  of $C_i$. However the vectors $v$ which  belong to $KL_{-h_j}$ have the property that  $v(j)=0$. Hence $v_i\in KL_{-h_k}\setminus KL_{-h_j}$,  and this implies $KL_{-h_j}\neq KL_{-h_k}$.
\end{proof}

\begin{Corollary}

\label{semi} Assume that $K[G]$ is inseparable.
Let $a=-h_j-h_k$. Then
\[
\dim_K KL_a=\dim_KKL-1\text{ if $\mathcal{F}'_{-a}\neq \emptyset$ and $\mathcal{F}'_{-h_j}=\mathcal{F}'_{-h_k}$}.
\]
Otherwise, $\dim_K KL_a=\dim_KKL$.

\end{Corollary}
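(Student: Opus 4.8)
The plan is to reduce the corollary to two ingredients followed by a short linear-algebra dichotomy. The first ingredient is that inseparability forces each $KL_{-h_j}$ to be a hyperplane of $KL$, and the second is that Lemma~\ref{terrible} already evaluates $KL_a=KL_{-h_j}+KL_{-h_k}$ whenever $\mathcal{F}'_a\neq\emptyset$. To set up the first ingredient I would argue as follows: since $K[G]$ is inseparable, its relation lattice is $j$-inseparable for every $j$, so by the equivalence (a)$\Leftrightarrow$(c) of Theorem~\ref{crossing} we have $T^1(R)_{-h_j}=0$ for all $j$. By Lemma~\ref{lemma3.2} this is equivalent to $V_{-h_j}=KL_{-h_j}$, and since $\dim_K V_{-h_j}=\dim_K KL-1$ by (\ref{begin}), we conclude $\dim_K KL_{-h_j}=\dim_K KL-1$ for every edge $e_j$; that is, each $KL_{-h_j}$ is a hyperplane of $KL$.

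Next I would convert the index condition on the $\mathcal{F}'$'s into a condition on these hyperplanes. By Lemma~\ref{basic} we may compute with induced cycles only, so $KL_{-h_j}=\mathrm{Span}_K\{v_i : i\in[s_1]\setminus\mathcal{F}'_{-h_j}\}$ and likewise for $k$. Hence $\mathcal{F}'_{-h_j}=\mathcal{F}'_{-h_k}$ makes the two spanning sets identical and forces $KL_{-h_j}=KL_{-h_k}$, while Lemma~\ref{not} supplies the converse implication $\mathcal{F}'_{-h_j}\neq\mathcal{F}'_{-h_k}\Rightarrow KL_{-h_j}\neq KL_{-h_k}$. Thus the equality $\mathcal{F}'_{-h_j}=\mathcal{F}'_{-h_k}$ is exactly the statement that these two hyperplanes coincide.

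The corollary then follows by cases. Suppose first $\mathcal{F}'_a\neq\emptyset$, so that $KL_a=KL_{-h_j}+KL_{-h_k}$ by Lemma~\ref{terrible}. If in addition $\mathcal{F}'_{-h_j}=\mathcal{F}'_{-h_k}$, the two hyperplanes agree and $KL_a=KL_{-h_j}$ has dimension $\dim_K KL-1$, which is the first alternative; if instead $\mathcal{F}'_{-h_j}\neq\mathcal{F}'_{-h_k}$, the two hyperplanes are distinct, so their sum is all of $KL$ and $\dim_K KL_a=\dim_K KL$. Finally, if $\mathcal{F}'_a=\emptyset$, then $[s_1]\setminus\mathcal{F}'_a=[s_1]$, so Lemma~\ref{basic} gives $KL_a=\mathrm{Span}_K\{v_i : i\in[s_1]\}$; since every cycle vector is a $K$-linear combination of induced-cycle vectors (the chord-splitting already used in the proof of Lemma~\ref{basic}), this span equals $KL$ and again $\dim_K KL_a=\dim_K KL$. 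These last two cases together exhaust the ``otherwise'' alternative.

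The one genuinely delicate step is the hyperplane fact of the first paragraph: it is essential that inseparability be invoked in the sharp form $T^1(R)_{-h_j}=0$, so that the a priori inclusion $KL_{-h_j}\subseteq V_{-h_j}$ becomes an equality and the codimension is pinned to exactly one rather than merely at least one. Once every $KL_{-h_j}$ is known to be a hyperplane, the remainder is the elementary dichotomy that two hyperplanes of a vector space either coincide or together span the whole space, and I expect no further obstacle.
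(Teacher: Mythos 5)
Your proposal is correct and follows essentially the same route as the paper: inseparability (via Theorem~\ref{crossing}) pins each $KL_{-h_j}$ to codimension one in $KL$, Lemma~\ref{terrible} gives $KL_a=KL_{-h_j}+KL_{-h_k}$ when $\mathcal{F}'_a\neq\emptyset$, and Lemmas~\ref{basic} and \ref{not} settle the remaining cases by the coincide-or-span dichotomy of hyperplanes. The only cosmetic difference is that you reach the hyperplane fact through Lemma~\ref{lemma3.2} and (\ref{begin}) while the paper cites Theorem~\ref{F1} and Proposition~\ref{F2} directly (noting $\dim_K(\Im\delta^*)_{-h_j}=1$ since every edge lies on a cycle), which amounts to the same computation.
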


\begin{proof} Since we assume that $G$ is inseparable, it follows from Corollary~\ref{F1} and Proposition~\ref{F2}  that $\dim_KKL-\dim_KKL_{-h_j}=\dim_K(\Im\delta^*)_{-h_j}$.  Since by assumption each edge of $G$ belongs to a cycle, it follows that $\dim_K(\Im\delta^*)_{-h_j}=1$.  Thus $\dim_KKL_{-h_j}= \dim_KKL-1$. Similarly, $\dim_KKL_{-h_k}= \dim_KKL-1$.  If $\mathcal{F'}_{-h_j}=\mathcal{F'}_{-h_k}$, then  $KL_{-h_j}=KL_{-h_k}$,  and if moreover, $\mathcal{F'}_{-a}\neq \emptyset$,  then together with  Lemma~\ref{terrible} we have $\dim_K KL_a=\dim_KKL-1$, as desired.

 Otherwise, there are two cases to consider.
If $\mathcal{F'}_{-a}= \emptyset$, then $KL_a=KL$,  by the definition of $KL_a$ and by Lemma~\ref{basic}. If $\mathcal{F'}_{-a}\neq \emptyset$ and $\mathcal{F'}_{-h_j}\neq \mathcal{F'}_{-h_k}$, then $KL_a=KL_{-h_j}+KL_{-h_k}=KL$,  using Lemma~\ref{terrible} together with Lemma~\ref{not}.
\end{proof}

\begin{Theorem} Let $G$ be a bipartite graph such that $R=K[G]$ is inseparable. Then the following statements are equivalent:

\label{semi-rigid}
\begin{enumerate}
\item[(a)] $K[G]$ is not semi-rigid;

 \item[(b)] there exist edges $e,f$ and an induced cycle $C$ such that $e,f$ have the same parity in $C$ and for any  other induced cycle $C'$, $e\in E(C')$ if and only if $f\in E(C')$.

 \end{enumerate}

 \end{Theorem}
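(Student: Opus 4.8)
The plan is to reduce the whole statement to the degree-two computation already packaged in Lemma~\ref{terrible} and Corollary~\ref{semi}. Recall that $K[G]$ is semi-rigid exactly when $T^1(R)_a=0$ for every $a$ with $-a\in H$, and that $T^1(R)_a=0$ automatically when $a\in H$ by Corollary~\ref{inH}; so I only examine degrees $a\notin H$ with $-a\in H$, and I measure them by the standard degree $d=\ell(-a)\geq 1$ (here $K[G]$ is standard graded with $\ell(h_i)=1$). The first step is to pin down $\MG_a$. For such an $a$, the condition $a+h_j\in H$ forces $h_j-(-a)\geq 0$, so $-a$ must be supported on $V(e_j)$ with all multiplicities at most $1$; since $-a\in H$ is a nonzero sum of edge vectors, this leaves only $-a=h_j$. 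Hence $\MG_a=\emptyset$ whenever $d\geq 2$, so by the final formula $\dim_K T^1(R)_a=\dim_K KL-\dim_K KL_a$, and $T^1(R)_a\neq 0$ becomes equivalent to $KL_a\subsetneq KL$ (which in turn forces $\mathcal F'_a\neq\emptyset$, by Lemma~\ref{basic}).

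For the implication (b)$\Rightarrow$(a), given the edges $e=e_j$, $f=e_k$ and the cycle $C$ of (b), the hypothesis that $e,f$ have the same parity in the induced cycle $C$ translates via Lemma~\ref{terrible} into $\mathcal F'_{-h_j-h_k}\neq\emptyset$, and the biconditional $e\in E(C')\Leftrightarrow f\in E(C')$ over all induced $C'$ translates via (\ref{minus}) into $\mathcal F'_{-h_j}=\mathcal F'_{-h_k}$; Corollary~\ref{semi} then gives $\dim_K KL_{-h_j-h_k}=\dim_K KL-1$, so $T^1(R)_{-h_j-h_k}\neq 0$, and since $h_j+h_k\in H$ this is a witness to non-semi-rigidity. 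For (a)$\Rightarrow$(b), I fix a witness $a$ with $-a\in H$ and $T^1(R)_a\neq 0$; by Corollary~\ref{inH} then $a\notin H$, so $d\geq 1$. The case $d=1$ means $a=-h_j$, impossible because inseparability gives $T^1(R)_{-h_j}=0$ through Theorem~\ref{crossing}. The case $d=2$ means $a=-h_j-h_k$, and then Corollary~\ref{semi}, read as an equivalence together with Lemma~\ref{terrible} and (\ref{minus}), delivers exactly condition (b).

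The crux is $d\geq 3$. Here $\mathcal F'_a\neq\emptyset$; I write $-a=h_{j_1}+\cdots+h_{j_d}$ and pick $i\in\mathcal F'_a$. The first task is to read off the combinatorial structure of $C_i$: since $a+h(v_i)\in H$, the vector $-a$ is a $0$--$1$ vector on $V(C_i)$, so the $e_{j_t}$ are vertex-disjoint edges of the induced cycle $C_i$, and the complementary vertices of $C_i$ are covered by edges of $G$ lying inside $V(C_i)$, hence by edges of $C_i$ because $C_i$ is chordless. Thus the $e_{j_t}$ together with these cover edges constitute one of the two alternating perfect matchings of the even cycle $C_i$, whence the $e_{j_t}$ are pairwise of the same parity in $C_i$. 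Next I use $\mathcal F'_a\subseteq\mathcal F'_{-h_{j_t}}$ for every $t$ (as $a+h(v_i)\in H$ implies $-h_{j_t}+h(v_i)\in H$), which by Lemma~\ref{basic} gives $KL_a\supseteq KL_{-h_{j_s}}+KL_{-h_{j_t}}$ for all pairs. If some pair has $\mathcal F'_{-h_{j_s}}\neq\mathcal F'_{-h_{j_t}}$, then Lemma~\ref{not} yields $KL_{-h_{j_s}}\neq KL_{-h_{j_t}}$, and since inseparability forces each of these to have dimension $\dim_K KL-1$, their sum is all of $KL$; then $KL_a=KL$ and $T^1(R)_a=0$, contradicting the choice of $a$. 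Hence all $\mathcal F'_{-h_{j_t}}$ coincide. In particular $\mathcal F'_{-h_{j_1}}=\mathcal F'_{-h_{j_2}}$ while $i\in\mathcal F'_{-h_{j_1}-h_{j_2}}$ because $e_{j_1},e_{j_2}$ have the same parity in $C_i$; Corollary~\ref{semi} then gives $T^1(R)_{-h_{j_1}-h_{j_2}}\neq 0$, i.e.\ the degree-two witness $(e_{j_1},e_{j_2},C_i)$ realizing (b).

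I expect the main obstacle to be this degree-$\geq 3$ analysis, and inside it the perfect-matching argument forcing the chosen edges to be pairwise of the same parity in $C_i$. Making it rigorous relies on two facts used together: that $C_i$ is induced, so the only edges of $G$ inside $V(C_i)$ are the edges of $C_i$, and that an even cycle carries exactly two edge-disjoint perfect matchings. Once the same-parity structure is in hand, the dimension bookkeeping in the ``all $\mathcal F'_{-h_{j_t}}$ equal'' case is short; but it is essential that inseparability has already been invoked to guarantee $\dim_K KL_{-h_j}=\dim_K KL-1$ for every edge, since this is precisely what lets two distinct such hyperplanes fill out $KL$ and collapses every higher-degree witness back to a degree-two one.
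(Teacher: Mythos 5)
Your proof is correct, and its skeleton coincides with the paper's: both reduce everything, via $\MG_a=\emptyset$ and the formula $\dim_K T^1(R)_a=\dim_K KL-\dim_K KL_a$, to the degree-two machinery of Lemma~\ref{terrible}, Lemma~\ref{not} and Corollary~\ref{semi}; degree one is excluded by inseparability through Theorem~\ref{crossing}, and degree two is exactly Corollary~\ref{semi} read as an equivalence together with (\ref{minus}). The genuine divergence is at witnesses of degree $d\geq 3$, precisely the step you single out as the crux. The paper disposes of it in two lines by monotonicity: for any two indices $j,k$ in the support of $-a$ one has $-a-(h_j+h_k)\in H$, hence $\MF'_a\subseteq \MF'_b$ for $b=-h_j-h_k$, hence $KL_b\subseteq KL_a\subsetneq KL$ by Lemma~\ref{basic}; since $(\Im\delta^*)_b=0$ as well, $T^1(R)_b\neq 0$ and one is back in the degree-two case. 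Your route instead establishes two structural facts the paper never needs: that the support edges of $-a$ form part of an alternating perfect matching of the induced cycle $C_i$ for a chosen $i\in\MF'_a$ (your argument here is sound -- $a+h(v_i)\in H$ does force $-a$ to be a $0$--$1$ vector supported in $V(C_i)$, the complementary vertices must be matched by edges of $C_i$ because $C_i$ is chordless, and an even cycle carries exactly two perfect matchings, so all support edges are pairwise of the same parity in $C_i$), and that all the sets $\MF'_{-h_{j_t}}$ coincide, via Lemma~\ref{not} plus the observation that inseparability makes each $KL_{-h_{j_t}}$ a hyperplane in $KL$, so two distinct ones would sum to $KL\subseteq KL_a$, contradicting $T^1(R)_a\neq 0$. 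What your version buys is strictly more information -- every pair of support edges, together with the single cycle $C_i$, is a degree-two witness for (b) -- at the cost of considerable extra combinatorics; what the paper's version buys is brevity, showing that the step you expected to be the main obstacle is in fact the cheapest one, since the inclusion $\MF'_a\subseteq\MF'_b$ alone collapses every higher-degree witness to a degree-two one without any matching analysis.
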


 \begin{proof}

 (b)\implies (a): Let $a=-g-h$, where  $g$ and $h$ are vectors in $H$ corresponding to the edges $e$ and $f$ respectively. Then  $\dim_K KL_a=\dim_K KL-1$ by  Corollary~\ref{semi}. Note that $\MG_a=\emptyset$, we have $(\Im \delta^*)_a=0$. Therefore $T^1(R)_a\neq 0$ by Corollary~\ref{rigid}, and in particular, $R$ is not semirigid.

(a)\implies (b): By assumption, there exists $a=\sum _{i\in [n]}-a_ih_i\in \ZZ H$ with $a_i\geq 0$ for $i=1,\cdots,n$ such that $T^1(R)_a\neq 0$.
Note that $a_i\in \{0,1\}$, for otherwise, $\MF'_a=\emptyset$ and so $KL_a=KL$. In particular $T^1(R)_a= 0$, a contradiction. Since $R$ is inseparable, it follows that  $|\{i\:\; a_i\neq 0\}|\geq 2$. If  $|\{i\:\; a_i\neq 0\}|= 2$, then $a=-h_k-h_j$ for some $1\leq i\neq j\leq n$. Therefore,   $\MF'_a\neq \emptyset$ and $\MF'_{-h_j}=\MF'_{-h_k}$ by Corollary~\ref{F1} and Corollary~\ref{semi}.

Let $e$ and $f$ be the  edges corresponding to the vectors  $h_j$ and $h_k$,  respectively. Then, since $\MF'_a\neq \emptyset$,  there exists an induced  cycle $C$ of $G$ such that $e$ and $f$ have the same parity in $C$, by Lemma~\ref{terrible}.  Moreover,  $\MF'_{-h_j}=\MF'_{-h_k}$ implies that for any induced cycle $C'$ of $G$, $e\in E(C')$ if and only if $f\in E(C')$.

Now suppose that  $|\{i\:\; a_i\neq 0\}|\geq 3$. Then there exists $j$ and $k$ with $a_j\neq 0$ and $a_k\neq 0$, and we set  $b=-h_j-h_k$. Note that  $\MF'_a\subseteq \MF'_b$. This implies that $KL_b\subseteq KL_a$.  Therefore, since   $(\Im \delta^*)_a=(\Im \delta^*)_b=0$, we have $T^1(R)_b\neq 0$, and we are in the previous case.
 \end{proof}

 \begin{Corollary}
\label{polyomino1}
Let $\MP$ be a convex polyomino. Then $K[\MP]$ is semi-rigid if and only if $\MP$ contains more than one cell.
\end{Corollary}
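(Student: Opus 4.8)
The plan is to read the statement off the combinatorial characterization of semi-rigidity, using that convex polyominoes are inseparable. By Theorem~\ref{polyomino} the ring $K[\MP]$ is inseparable, so Theorem~\ref{semi-rigid} applies: $K[\MP]$ fails to be semi-rigid if and only if there are edges $e,f$ and an induced cycle $C$ of $G(\MP)$ in which $e,f$ have the same parity, such that every other induced cycle $C'$ contains $e$ precisely when it contains $f$. Recall from the proof of Theorem~\ref{polyomino} that every induced cycle of $G(\MP)$ is a $4$-cycle $h_a-v_c-h_b-v_d$ with $a<b$, $c<d$, corresponding to an inner interval $[(a,c),(b,d)]$ of $\MP$, all of whose cells (hence all vertices $(i,j)$ with $a\le i\le b$ and $c\le j\le d$) lie in $\MP$. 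Under the dictionary $\{h_i,v_j\}\leftrightarrow(i,j)$ the four edges of such a $4$-cycle are the four corners of the rectangle, and two of them have the same parity (are opposite in the $4$-cycle) exactly when they form a diagonal pair, i.e.\ a pair $(p,q),(p',q')$ with $p\ne p'$ and $q\ne q'$.

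First I would treat the case that $\MP$ consists of a single cell. Then $V(\MP)$ has exactly four elements and $G(\MP)=K_{2,2}$ is a single $4$-cycle $C$, which is the \emph{only} induced cycle. Taking $e,f$ to be any pair of opposite edges of $C$, condition (b) of Theorem~\ref{semi-rigid} holds vacuously, as there is no other induced cycle $C'$ to violate it. Hence $K[\MP]$ is not semi-rigid, giving one direction.

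Now suppose $\MP$ has more than one cell; I must show condition (b) fails, i.e.\ that for every induced cycle $C$ and every pair $e,f$ of opposite edges of $C$ there is an induced cycle $C'$ containing exactly one of $e,f$. Write $C$ as the rectangle $[(a,c),(b,d)]$ and, say, $e=(a,c)$, $f=(b,d)$ (the other diagonal is symmetric). If $C$ is not a single cell, say $b\ge a+2$, then by the fill-in property above the vertices $(a+1,c),(a+1,d)$ lie in $V(\MP)$ and the sub-rectangle $[(a,c),(a+1,d)]$ is again an inner interval, hence yields an induced $4$-cycle $C'$; since $a+1<b$, this $C'$ contains the corner $(a,c)=e$ but not $(b,d)=f$. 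The case $d\ge c+2$ is handled by the symmetric thin rectangle $[(a,c),(b,c+1)]$. If instead $C$ is a single cell, then since $\MP$ is connected with at least two cells, $C$ shares a geometric edge with an adjacent cell $D$, and I take $C'=D$. The two vertices of $\MP$ on that shared edge are one corner from each of the two diagonal pairs of $C$, so $D$ contains exactly one member of the pair $\{e,f\}$. In every case $C'$ separates $e$ from $f$, so (b) fails and $K[\MP]$ is semi-rigid.

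The point requiring care is the structural input used in the third paragraph: that an induced cycle of $G(\MP)$ is a $4$-cycle filling an inner interval, so that (i) its proper thin sub-rectangles are again inner intervals (hence induced cycles) and the intermediate corner vertices are present, and (ii) a cell adjacent to a one-cell $C$ meets it in exactly one corner of each diagonal pair. Both facts rest on the convexity of $\MP$, via the same fill-in arguments already exploited in the proof of Theorem~\ref{polyomino}; establishing these, together with dispatching the symmetric diagonal sub-cases, is the only genuine work.
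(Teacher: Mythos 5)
Your argument is correct and takes essentially the same route as the paper's: both rest on the dictionary between induced $4$-cycles of $G(\MP)$ and inner intervals, handle the one-cell case by noting condition (b) of Theorem~\ref{semi-rigid} holds vacuously for the unique $4$-cycle, and in the multi-cell case separate any diagonal pair $e,f$ by a sub-rectangle of $C$ (the paper uses the corner cell $[(i,j),(i+1,j+1)]$ where you use a thin rectangle) and, when $C$ is itself a single cell, by a cell adjacent to it. The only cosmetic difference is that the paper argues the contrapositive (not semi-rigid implies $\MP$ has a unique cell) while you prove the implication directly.
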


\begin{proof} Assume that $\MP$ contains a unique cell. Then $G(\MP)$ is a square and it is not semi-rigid by Theorem~\ref{semi-rigid}.

\medskip
Conversely, assume that $K[\MP]$ is not semi-rigid. Then there exist two edges $e,f$ and an induced cycle $C$ of $G(\MP)$ satisfying the condition (b) in Theorem~\ref{semi-rigid}. Let $(i,j)$ and $(k,\ell)$ be vertices of $\MP$ corresponding to the edge $e$ and $f$, respectively. Then the  two edges of $C$ other than $e$ and $f$ correspond to the vertices $(i,\ell)$ and $(k,j)$ of $\MP$. It follows that $k\neq i$ and $\ell\neq j$.   Without loss of generality, we may assume that $k>i$ and $\ell>j$. Then $(i+1,j+1)\in V(\MP)$. Let $C'$ be the induced cycle of $G(\MP)$ corresponding to the cell $[(i,j),(i+1,j+1)]$ of $\MP$. Since $C'$ contains the edge $e$,  $C'$ must contain $f$ by the condition (b) and thus $k=i+1$ and $\ell=j+1$. We claim that $[(i,j),(i+1,j+1)]$ is the only cell of $\MP$. Suppose that this is not the case. Then we let $C_t$, $t=1,2,3,4$ be four cells  which share a common edge with the cell $[(i,j),(i+1,j+1)]$. Note that $\MP$ contains at least one of the $C_t$. Indeed, since $\MP$ is connected and  since by assumption $\MP$ contains a cell $C$  different from $[(i,j),(i+1,j+1)]$, there   exists a path in $\MP$ between the cell $[(i,j),(i+1,j+1)]$ and $C$. This path must contain one of the $C_t$. However $V(C_t)$ contains exactly one of the two vertices $(i,j)$ and $(i+1,j+1)$ for $t=1,\ldots,4$. In other words, there exists an induced cycle of $G(\MP)$ which contains exactly one of the edges $e$ and $f$. This is contradicted to the condition (b) and thus our claim has been proved.
\end{proof}

\section{Classes of bipartite graphs which are semi-rigid or rigid}

 As an example of an application of  Formula~(\ref{finalformua}),  we will   show  that the edge ring of a large complete bipartite graph with one edge removed is rigid.

\medskip
Let $G_{k,m-k}$ be a bipartite graph on parts $U=\{1,\ldots,k\}$ and $V=\{k+1,\ldots,m\}$ with edge set $$E(G_{k,m-k})=\{\{i,j\}\:\; i\in U, j\in V, \{i,j\}\neq \{1,m\} \}.$$ Thus $G_{k,m-k}$ is obtained from the complete bipartite graph $K_{k,m-k}$ by deleting  one of its edges.

 Our main result of this section is the following:

\medskip
\begin{Proposition} Let $R$ be the edge ring of $G_{k,m-k}$.

\label{main4}
\begin{enumerate}
\item[(a)] If $k=m-k=3$, then $R$ is inseparable, but not rigid.
\item[(b)] If $m-k\geq k\geq  4$, then $R$ is rigid.
\end{enumerate}

\end{Proposition}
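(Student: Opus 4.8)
The plan is to run both parts through the graded formula (\ref{finalformua}), $\dim_K T^1(R)_a=l-l_a-d_a$ with $l=\dim_K KL$, using that a nonnegative, balanced vector of $\ZZ H$ lies in $H$ if and only if it satisfies the two Hall-type inequalities at the endpoints $1$ and $2n$ of the single missing edge. For part (a) I would first record that $K[G_3]$ is inseparable: since $G_3=G(\MP_3)$ for the convex polyomino $\MP_3$ (a $2\times2$ block of cells with one corner cell removed), this is immediate from Theorem~\ref{polyomino}, and it is exactly the hypothesis needed to invoke Theorem~\ref{semi-rigid}. To obtain semi-rigidity I would list the five induced $4$-cycles of $G_3$ together with the edges each contains and verify by a short finite check that condition (b) of Theorem~\ref{semi-rigid} never occurs: no two edges $e,f$ of equal parity in some induced cycle $C$ lie in exactly the same induced cycles other than $C$. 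Its failure is precisely semi-rigidity.

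For non-rigidity it suffices to produce one degree with $T^1(R)_a\neq0$. I would take $a=\delta_6-\delta_1-\delta_2-\delta_4\in\ZZ H$, whose deficient vertices $1,2,4$ form the closed neighbourhood of the degree-two vertex $1$ (the summand $\delta_6$ is the unique way to rebalance $a$, and $-a\notin H$). A cycle contributes to $\MF_a$ only if its vertex set contains $\{1,2,4\}$, so $\MF_a$ consists exactly of the two induced $4$-cycles through vertex $1$; hence $KL_a=\Span\{v_3,v_4,v_5\}$, which has dimension $2$, while $\dim_K KL=3$. Furthermore $\MG_a=\emptyset$, since no single edge can cancel three deficient vertices, so $d_a=0$. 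Therefore $\dim_K T^1(R)_a=3-2-0=1$, and $R$ is not rigid.

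For part (b) the target is $\dim_K KL_a+\dim_K KD_a=\dim_K KL$ for every $a$ (Corollary~\ref{rigid}). I would first clear away the easy degrees: $T^1(R)_a=0$ automatically unless every coordinate of $a$ is at least $-1$ (a coordinate $\le-2$ forces $\MF_a=\emptyset$, hence $KL_a=KL$); the degrees $a\in H$ are settled by Corollary~\ref{inH}; and the degrees with $-a\in H$ are settled by semi-rigidity, which for $n\ge4$ is proved just as in (a) by checking that condition (b) of Theorem~\ref{semi-rigid} fails. What remains are the degrees with $a,-a\notin H$ and all coordinates $\ge-1$. Writing $D=\{j:a_j=-1\}$, the cycles in $\MF_a$ are exactly those whose vertex set contains $D$, so $KL_a$ is spanned by the induced cycles that miss a vertex of $D$, and $\MG_a$ consists of the realizable edges meeting $D$.

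The heart of the matter is to show that the isolation producing non-rigidity at $n=3$ cannot recur once $n\ge4$. The decisive point is that vertices $1$ and $2n$ now have degree $n-1\ge3$, so the missing edge $\{1,2n\}$ no longer blocks rerouting: for instance the cycle $v(C)$ of $C=1,2,3,4$, which at $n=3$ could not be expressed through cycles avoiding a vertex of $D=\{1,2,4\}$, satisfies for $n\ge4$ the identity $v(1,2,3,4)=v(1,2,3,6)-v(1,4,3,6)$, whose two cycles miss the vertices $4$ and $2$ respectively and use the now-present edge $\{1,6\}$. Thus the cycles through $D$ no longer contribute a missing direction, and $KL_a=KL$ whenever $\MG_a=\emptyset$. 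The main obstacle is to make this systematic: I would organize the remaining degrees by $\abs{D}$ and by the location of the deficient and surplus vertices relative to the two degree-$(n-1)$ vertices $1$ and $2n$, and show in each configuration that either the rerouting cycles available for $n\ge4$ fill $KL_a$ up to all of $KL$, or, when the cycles through $D$ do span a new direction, there are enough edges $j$ with $a+h_j\in H$ for $KD_a$ to absorb exactly the difference, so that the equality of Corollary~\ref{rigid} holds and $T^1(R)_a=0$.
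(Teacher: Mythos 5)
Your part (a) is sound and coincides with the paper's argument: the paper also gets semi-rigidity of $K[G_3]$ from the polyomino machinery (it cites Corollary~\ref{polyomino1} directly, which packages exactly the finite check of condition (b) of Theorem~\ref{semi-rigid} you propose to do by hand), and it uses the identical witness degree $a=\delta_6-\delta_1-\delta_2-\delta_4$ with the same count $\dim_K KL_a=2$, $\dim_K D_a=0$, $\dim_K KL=3$. Your rerouting identity $v(1,2,3,4)=v(1,2,3,6)-v(1,4,3,6)$, available once $\{1,6\}\in E(G_n)$ for $n\geq 4$, is also precisely the mechanism the paper exploits in part (b).

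For part (b), however, there is a genuine gap, in two respects. First, your structural claim that ``the cycles in $\MF_a$ are exactly those whose vertex set contains $D$'' is false for the delicate degrees: containment of $D=\{j: a_j=-1\}$ in $V(C)$ is necessary but not sufficient, because membership in $H$ also requires the endpoint inequality $\ell(\cdot)\leq r(\cdot)$ of Lemma~\ref{cri}(b). For example, when $a=b+2(\delta_1+\delta_{2n})$ with $b\in H$ (so $D=\emptyset$), one has $a+h(v(C))\in H$ if and only if $V(C)\sect\{1,2n\}=\emptyset$ --- not for all induced cycles. These surplus-at-the-endpoints degrees, isolated via the decomposition of Corollary~\ref{form}, are exactly where the missing edge bites, and the paper must split them further ($k=0,1,2,\geq 3$, and $b\in H$ versus $b\notin H$) with different arguments in each case. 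Second, and more fundamentally, your proposal stops where the paper's proof begins: the assertion ``$KL_a=KL$ whenever $\MG_a=\emptyset$'' is, by Corollary~\ref{rigid}, equivalent to rigidity at those degrees, so it is the statement to be proved rather than a consequence of one rerouting example; and at the degrees with $\MG_a\neq\emptyset$ (the paper's Case 1 with $k\leq 1$, Case 2, and part of Case 3) one needs explicit dimension counts --- e.g.\ the spanning-tree argument of Lemma~\ref{spanning} showing $\dim_K D_a=\dim_K KL=n^2-2n$, or the estimates $\dim_K D_a\geq n-2$ against $\dim_K KL_a\geq n^2-3n+2$ in the subcase $a_1=-1$ --- none of which appear in your sketch. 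Your reduction via semi-rigidity is legitimate but covers only $-a\in H$, leaving all mixed-sign degrees (the bulk of the paper's Cases 2--4) to the unexecuted configuration analysis. As written, the proposal is a correct plan with one incorrect intermediate claim, not a proof of (b).
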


We need some preparations. First, we determine when an element in $\ZZ^m$ belongs to $H$ and $\ZZ H$, where $H=H(G_{k,m-k})$. For this, we introduce some notation, which is used throughout this section.

\medskip
Let $a=(a_1,\ldots,a_m)\in \ZZ^{m}$. We set $$a_U=\sum_{i\in U}a_i\mbox{\qquad and\qquad } a_V=\sum_{i\in V}a_i.$$

We also set $$\ell(a)=a_1+a_{m} \mbox{\qquad and\qquad } r(a)=\sum_{i=2}^{m-1} a_i.$$

Recall that for an cycle $C$ we use $V(C)$ for its vertex set and $v(C)$ for the corresponding vector of $C$, which is unique up to sign. Note that the degree $h(v(C))$ of $v(C)$ is $\sum_{i\in V(C)}\delta_i$.  For any edge $e=\{i,j\}\in E(G)$ we use $h(e)$ to denote the vector $\delta_i+\delta_j\in  \ZZ^{m}$

\begin{Lemma}

\label{cri} Let  $H=H(G_{k,m-k})\subset \ZZ^m$ . Then for any $a\in \ZZ^m$,

\begin{enumerate}
\item[(1)] $a\in \ZZ H$ if and only if $a_U=a_V$.
\item[(2)] The following conditions are equivalent:
\begin{enumerate}
\item[(i)] $a\in H$;
\item[(ii)] $a_U=a_V$, $\ell(a)\leq r(a)$ and $a_i\geq 0$ for all $i=1,\ldots,m$.
\end{enumerate}

\item[(3)] Let $a\in \ZZ H$ with $a_i\geq 0$ for all $i\in [m]$. Then  either $a\in H$ or $a=b+k(\delta_1+\delta_{m})$, where $k\geq 1$ and $b\in H$ with $\ell(b)=r(b)$.
\end{enumerate}
\end{Lemma}

\begin{proof} (1) It is clear that $a_U=a_V$ if $a\in \ZZ H$. For the converse, first note that $\delta_1+\delta_m=(\delta_1+\delta_{k+1})+(\delta_2+\delta_m)-(\delta_2+\delta_{k+1})\in \ZZ H$. Then the result follows by induction on $|a_U|$.

(2) (i)\implies (ii): Note that $\ell(h(e))\leq r(h(e))$ for any  $e\in E(G)$ since $\{1,m\}\notin E(G)$. Now given $a\in H$. Then $a=\sum_{e\in E(G)} c_eh(e)$, where $c_e$ is a non-negative integer for each $e\in E(G)$. It follows that  $\ell(a)=\sum_{e\in E(G)}c_e\ell(h(e))\leq \sum_{e\in E(G)}c_er(h(e))=r(a)$, as required.

(ii)\implies (i): We use induction on $\ell(a)$. If $\ell(a)=0$, we see that $a\in H$ by induction on $a_U$. Assume that $\ell(a)>0$. Without restriction we may further assume  that $a_1\geq a_{m}$. Then $a_V-a_m\geq a_U-a_1$. Note that $r(a)\geq \ell(a)>0$, one has $a_1>0$ and $a_v-a_m>0$.
 Hence there exists  an even number $k+1\leq j\leq m-1$ with $a_j>0$. Since $b:=a-(\delta_1+\delta_j)\in H$ by induction, it follows that $a=b+(\delta_1+\delta_j)\in H$.

(3) Suppose that $a\notin H$. Then $\ell(a)> r(a)$, by (2). Note that  $a_U=a_V$ by (1),  we have   $\ell(a)-r(a)=(a_U+a_V)-2r(a)$ is an even number, say $2k$.  It follows that $a_1\geq k$ and $a_m\geq k$, since $a_1+a_m=a_U+k$. Set $b=a-k(\delta_1+\delta_m)$. Then $b_i\geq 0$ for $i\in [m]$ and $\ell(b)=r(b)$. In particular, $b\in H$ by (2), as required.
\end{proof}

 In the proof of the following lemma we use a well-known fact from graph theory: if $F$ is a subset of the edge set $E(G)$ of a connected graph and $F$ contains no cycle, then there is a spanning tree $\Gamma$ of $G$ such that $F\subseteq E(\Gamma)$. Here a spanning tree of a connected graph $G$ means that a subgraph of $G$ which is tree having the same vertex  set as $G$.

\begin{Lemma} Let $G$ be a connected graph and denote $H=H(G)$. Then for $a\in \ZZ H$ such that $\{e\in E(G)\:\; a+h(e)\notin H\}$ contains no cycle, we have $\dim_K KL=\dim_K D_a$. In particular, $T^1(K[G])_a=0$.

\label{spanning}

\end{Lemma}
\begin{proof}
Let $F=\{e\in E(G)\:\; a+h(e)\notin H\}$. Since $F$ contains no cycle, there exists a spanning tree $\Gamma$ of $G$ such that $F\subset E(\Gamma)$. Without loss of generality we assume that $E(G)\setminus E(\Gamma)=\{e_1,\ldots,e_r\}$, where $r=mk-k^2-m$. Note that $h(e_i)+a\notin H$ for each $i=1,\ldots,r.$

For each $i=1,\ldots,r$, $\Gamma+e_i$ contains a unique induced cycle, say $C_i$. Let $v_i=v(C_i)$, the vector corresponding to the cycle $C_i$ for $i=1,\ldots,k$. Then  for all $i=1,\ldots,r$ we have $v_i(i)\in \{\pm1\}$ and $v_i(j)=0$ if $j\neq i$ and $1\leq j\leq r$.   It follows that $\dim_K D_a\geq r$ since $(v_1(i),\ldots,v_r(i),\ldots,v_{s}(i))\in D_a$ for  $i=1,\ldots,r$. Here $s$ is the number of induced cycles of $G$. On the other hand, $\dim_K D_a\leq \dim_K KL$ and $\dim_k KL=|E(G)|-|V(G)|+1=r$. Hence $\dim_KKL=\dim_KD_a$ and  $T^1(K[G])_a=0$ by Proposition~\ref{F2}.
\end{proof}

\begin{proof}[Proof of Proposition~\ref{main4}]
(a) Since $G_{3,3}$ is inseparable, we have $K[G_{3,3}]$ is inseparable by Corollary~\ref{unique}.

   Let $a=\delta_6-\delta_1-\delta_4-\delta_5\in \ZZ H$. Then $KL_a$ is spanned by the vectors corresponding to the cycles $C_1: 2,4,3,5$, $C_2: 2,5,3,6$ and $C_3: 2,4,3,6$. This implies that $\dim_K KL_a=2$. Since $\dim_K D_a=0$ and $\dim_KKL=3$, we have $T^1(R)_a= 1\neq 0$. In particular, $R$ is not rigid, as required.
\medskip

(b) Assume that $m\geq 4$ and $m-k\geq 4$. Denote $G_{k,m-k}$ by $G$ and $K[G_{k,m-k}]$ by $R$.  We want to prove that $T^1(R)_a=0$ for each $a\in \ZZ H\subset \ZZ^m$. We distinguish the  following cases.

\noindent
Case 1 :  $a_i\geq 0$ for all $i\in [m]$. By Lemma~\ref{cri}, either $a\in H$ or $a=b+k(\delta_1+\delta_{m})$, where $k\geq 1$ and $b\in H$ with $\ell(b)=r(b)$. If $a\in H$, then $T^1(R)_a=0$, see  Corollary~\ref{inH}. If $a=b+k(\delta_1+\delta_{2n})$ with $k=1$, then for any edge $e=\{i,j\}$ with $e\cap \{1,m\}=\emptyset$, we have $a+\delta_i+\delta_j\in H$ by Lemma~\ref{cri}. It follows that
$\{e\in E(G)\:\; a+h(e)\notin H\}$ contains no cycle, and so $T^1(R)_a=0$ by Lemma~\ref{spanning}. If $k=2$,  then for any induced cycle $C$,  $a+h(v(C))\in H$ if and only if $V(C)\cap \{1,m\}=\emptyset$. This follows from  Lemma~\ref{cri} and the fact that any induced cycle of $G$ is a $4$-cycle. To prove $KL=KL_a$, we have to show if $V(C)\cap \{1,m\}=\emptyset$, then $v(C)\in KL_a$. Given an induced cycle $C:i_1,i_2,i_3,i_4$ with $V(C)\cap \{1,m\}=\emptyset$, where $\{i_1,i_3\}\subset U$  and $\{i_2,i_4\}\subset V$. Then we obtain two  cycles $C_1$: $i_1,i_2,i_3,1$ and $C_2:$  $i_3,i_4,i_1,1$. Note that $v(C_1),v(C_2)\in KL_a$ and $v(C)$ is a linear combination of $v(C_1),v(C_2)$, we have  $KL=KL_a$ and so $T^1(R)_a=0$. If $k\geq 3$, then  for any induced cycle $C$, one  has $a+h(v(C))\notin H$ by Lemma~\ref{cri} and so $KL_a=KL$. In particular, $T^1(R)_a=0$.

\vspace{2mm}\noindent {\bf Remark:}  If $a_i\leq -2$, then $\MF'_a=\emptyset$ and so  $T^1(R)_a=0$. In the following cases, we always assume that $a_i=-1$ if $a_i<0$. \vspace{2mm}

\noindent Case 2:  There exists a unique $i\in [m]$ with $a_i<0$.  Then $a_i=-1$. By symmetry, we only need to consider the cases when $i=1$ and when  $i=2$.

We first assume that $i=1$. Since $a_U=a_V$, there exists  $1\neq j\in U$ such that $a_j>0$, and so $a=b+\delta_j-\delta_1$, where $b_U=b_V$ and $b_{\ell}\geq 0$ for each $\ell\in [m]$. By Lemma~\ref{cri}, either $b\in H$ or $b=c+k(\delta_1+\delta_{m})$ with $c\in H$ and $k>0$. The second case cannot happen because $a_1=-1$.  Hence  for any $e\in E(G)$, $a+h(e)\in H$ if and only if $1\in e$.  In other words, $a+h(e)\in H$ if and only if   $e\in \{\{1,k+1\},\{1,k+2\},\ldots,\{1,m-1\}\}$. Denote $\{1,k+i\}$ by $e_i$  for
$i=1,\ldots,m-k-1$. Let $C_i$ be the cycle $1,k+i,2,m-1$  and let $v_i=v(C_i)$ for $i=1,\ldots, m-k-2$. Then for $i=1, \ldots, m-k-2$, we have $v_i(i)\in\{\pm 1\}$ and $v_i(j)=0$ for $j\neq i$ and $j=1,\ldots, m-k-2$. This implies that $\dim_K D_a\geq m-k-2$. To compute $\dim_K KL_a$, we notice that if $C$ is an induced cycle with $1\notin V(C)$, then $a+h(v(C))\notin H$ and thus $KL_a$ contains the cycle space of the complete bipartite graph   with bipartition $U\setminus \{1\}$ and $V$, which has the dimension $(m-k)(k-1)-m+2$, see (\ref{dimension}). Thus $T^1(R)_a=0$ because $\dim_K KL=(m-k)k-m$.

Next we assume that $i=2$. Then $a=b+\delta_j-\delta_2$, where $b_{\ell}\geq 0$ for all $\ell\in [m]$  and $b_U=b_V$, $2\neq j\in U$. By Lemma~\ref{cri},  we have either $b\in H$ or $b=c+k(\delta_1+\delta_{m})$ for some $k\geq 1$ and with $c\in H$ and $\ell(c)=r(c)$. Suppose first that $b\notin H$ and $k\geq 2$. Then for any cycle $C$, $a+h(v(C))\in H$ implies $V(C)\cap \{1,m\}=\emptyset$. Thus, similarly as in Case 1  we see that $KL_a=KL$ and $T^1(R)_a=0$.
  Suppose next that $j\neq 1$ and  that $b\in H$ or $b\notin H$ and $k=1$. Then $a+h(e)\in H$ for any $e\in \{\{3,k+1\},\ldots, \{3,m-2\}\}$. Denote $\{3,k+t\}$ by $e_t$ for $t=1,\ldots, m-k-1$.  For $t=1,\ldots, m-k-1$, let $C_t$ be the cycle $3,k+t,4,m$ and let $v_t=v(C_t)$, the vector corresponding to  $C_t$. Then $v_t(t)\in \{\pm 1\}$ for $t=1,\ldots,m-k-1$ and $v_t(k)=0$ for $k\neq t$. This implies that $\dim_KD_a\geq m-k-1$.  On the other hand, $KL_a$ contains the cycle space of the subgraph of $G_{k,m-k}$ induced on  $\{1,3,4,\ldots, k\}\cup \{k+1,k+2,\ldots,m\}$, which has the dimension $(k-1)(m-k)-m+1$. Thus $T^1(R)_a=0$.

Finally suppose  that $j=1$ and that and also $k=1$ if  $b\notin H$.  If  $b\in H$, then  we check that $a+h(e)\in H$ for any $e\in \{\{2,k+1\},\ldots, \{3,m-2\}\}$ and deduce  that $T^1(R)_a=0$, in the same process as in the last case. If
$b\notin H$ and $k=1$, then for any induced cycle $C$, we have $a+h(v(C))\in H$ if and only if $2\in V(C)$ and $\{1,m\}\cap V(C)=\emptyset$. We claim that $KL_a=KL$. Given an induced cycle  $C: 2,i_1,i_2,i_3$ with $a+h(v(C))\in H$. Here $i_1$ and $i_3$ belong to $V$ and $i_2$ belong to U.  We let $C_1:2,i_1,i_2,m$ and $C_2: i_2,i_3,2,m$. Then $v(C_1)$ and $v(C_2)$ belong to $KL_a$ and $v(C)\in \{\pm v(C_1)\pm v(C_2)\}$. Thus $KL_a=KL$, as claimed. In particular, $T^1(R)_a=0$.

\medskip
\noindent
Case 3: $|\{k\:\; a_k<0\}|=2$. Without restriction we may assume $a_i=a_j=-1$ for some $i\neq j$.  Assume first that both $i$ and $j$ belong to $V$. Then for any induced cycle $C$ such that $\{i,j\}\nsubseteq V(C)$, we have $v(C)\in KL_a$. Let $C: k,i,\ell,j$ be a cycle with $\{i,j\}\subseteq V(C)$. We  choose   $d\in V\setminus \{i,j,m\}$. Then we obtain two cycles $C_1:$$ k,i,\ell,d$ and $C_2:$$ \ell,j,k,d$. Since $v(C)\in \{\pm v(C_1)\pm v(C_2)\}$  and since $v(C_t)\in KL_a$ for $t=1,2$, we have $v(C)\in KL_a$ and thus  $KL_a=KL$.  In particular $T^1(R)_a=0$.

  Next assume that  $i\in V$  and $j\in U$ and $\{i,j\}\neq \{1,m\}$. Notice that we can write $a$ as $a=b+k(\delta_1+\delta_{m})-(\delta_i+\delta_j)$, where $b\in H$ and $k\geq 0$. Moreover, if $k> 0$ then $\ell(b)=r(b)$ and $\{i,j\}\cap \{1,m\}=\emptyset$.

  If $k=0$, then $\dim_K D_a=1$,  and $KL_a$ contains the cycle space of the graph which is obtained from $G$ by deleting the edge $\{i,j\}$. Hence $\dim_KKL_a\geq \dim_KKL-1$, and so $T^1(R)_a=0$.

  If $k=1$, then for any induced cycle $C$, we have $a+h(v(C))\in H$ if and only if $\{i,j\}\subseteq V(C)$ and $V(C)\cap \{1,m\}=\emptyset$. Let $C:i,j,k,\ell$ be an induced cycle such that $a+h(v(C))\in H$. Then the vectors $v_1,v_2$ which correspond to cycles $j,k,\ell,m$ and $\ell,i,j,m$  belong to $KL_a$ and $v(C)\in \{\pm v(C_1)\pm v(C_2)\}$. It follows that $KL=KL_a$ and $T^1(R)_a=0$.

  If $k\geq 2$, then $\MF'_a=\emptyset$ by Lemma~\ref{cri} and it follows that  $KL=KL_a$. In particular $T^1(R)_a=0$.

 Finally assume that $\{i,j\}=\{1,m\}$. Then $a=b-\delta_1-\delta_{m}$ with $b\in H$ and so  $\MF'_a=\emptyset$.   It follows that $KL_a=KL$ and  $T^1(R)_a=0$.

\medskip
\noindent Case 4:  $|\{k\:\; a_{k}<0\}|=3$. We may assume that $a_i=a_j=a_k=-1$. We only need to  consider the case when $\MF'_a\neq \emptyset$. So we may assume $i,k$ belong to $V$ and $j\in U$, and $\{1,m\}\nsubseteq \{i,j,k\}$. Let $C:i,j,k,\ell$ be an induced cycle such that $a+h(v(C))\in H$. We choose  $d\in V\setminus \{i,k,m\}$ and let  $C_1: j,k,\ell,d$ and $C_2:\ell,i,j,d$ be two cycles in $G$.  Then $v(C_1)$ and $v(C_2)$ belong to $KL_a$ and $v(C)\in \{\pm v(C_1)\pm v(C_2)\}$. This implies $KL_a=KL$,  and in particular, $T^1(R)_a=0$.

\medskip
\noindent
Case 5:  $|\{k\:\; a_k<0\}|\geq 4$. If $|\{k\:\; a_k<0\}|= 4$, we may assume that $a_i=a_j=a_k=a_{\ell}=-1$. Then for any induced cycle $C$,  $a+h(v(C))\in H$ implies that $V(C)=\{ i,j,k,\ell\}$.  We may assume that $i$ and $k$ belong to $V$.  Choose  $t\in V\setminus \{j,\ell\}$, and let $C_1: i,j,k,t$ and $C_2: k,l,i,t$ be $4$-cycles of $G$. Since $v(C)\in\{\pm v(C_1)\pm v(C_2)\}$, we have  $KL_a=KL$, and consequently, $T^1(R)_a=0$. If $|\{k\:\; a_k<0\}|> 4$, then $\MF'_a=\emptyset$ and so $T^1(R)_a=0$.

Thus we have shown that $T^1(R)_a=$ for all $a\in \ZZ H$, and this shows that $R$ is rigid, as desired.
\end{proof}

The statement of Proposition~\ref{main4}   as well as its proof indicate that a graph $G$ which is obtained from the  complete bipartite $K(n,n)$ by removing $t$ edges is rigid if $n$ compared with $t$ is large.

\end{document}